\DeclareTextFontCommand{\myemph}{\bfseries\em}
\newcommand{\imp}[2]{($#1$) $\Rightarrow$ ($#2$)} 
\DeclareMathOperator*{\colim}{colim} 
\newcommand{\calg}{\mathbf{CAlg}} 
\renewcommand{\hom}{\mathrm{Hom}}  
\newcommand{\opp}{\mathrm{op}} 
\newcommand{\cnerve}{\mathrm{N}_{\bullet}} 
\newcommand{\fun}{\mathbf{Fun}} 
\newcommand{\setc}{\mathbf{\boldsymbol{\mathcal{S}}et}} 
\newcommand{\bigcatinf}{\mathbf{\boldsymbol{\mathcal{C}}AT}_{\infty}} 
\newcommand{\SP}{\mathbf{\boldsymbol{\mathcal{S}}p}} 
\newcommand{\pr}{\boldsymbol{\mathcal{P}}r} 
\newcommand{\prl}{\mathbf{\pr}^{\mathrm{L}}} 
\newcommand{\prr}{\mathbf{\pr}^{\mathrm{R}}} 
\newcommand{\prlc}{\prl_{\omega}} 
\newcommand{\prrc}{\prr_{\omega}} 
\newcommand{\prst}{\mathbf{\pr}^{\mathrm{st}}} 
\newcommand{\prstc}{\prst_{\omega}} 
\newcommand{\opr}[1]{ #1^{\otimes}} 
\newcommand{\module}[1]{\mathbf{Mod}_{#1}} 
\DeclareMathOperator{\frc}{Frac} 
\DeclareMathOperator{\aut}{Aut} 
\newcommand{\Nm}{\mathrm{Nm}} 
\newcommand{\kch}{\boldsymbol{\mathcal{K}}} 
\newcommand{\bkch}{\kch^{\mathrm{b}}} 
\newcommand{\dcat}{\mathcal{D}}
\newcommand{\bdcat}{\dcat^{\mathrm{b}}}
\DeclareMathOperator{\Char}{char} 
\DeclareMathOperator{\gal}{Gal} 
\newcommand{\Z}{\mathbb{Z}} 
\newcommand{\Q}{\mathbb{Q}} 
\DeclareMathOperator{\spec}{Spec} 
\DeclareMathOperator{\spf}{Spf} 
\newcommand{\ets}{_{\text{\'{e}t}}} 
\newcommand{\rgama}[1]{\mathrm{R}\boldsymbol{\Gamma}_{#1}} 
\DeclareMathOperator{\htcat}{h} 
\newcommand{\subss}[1]{ #1_{\bullet}} 
\newcommand{\sset}{\mathbf{s\boldsymbol{\mathcal{S}}et}} 
\newcommand{\map}{\mathrm{Map}}  
\DeclareMathOperator{\spa}{Spa} 
\newcommand{\agmot}{\mathbf{DA}}
\newcommand{\fagmot}{\mathbf{FDA}}
\newcommand{\rigmot}{\mathbf{RigDA}}
\newtheorem{thm}{Theorem}[section]
\newtheorem{pro}[thm]{Proposition}
\newtheorem{cor}[thm]{Corollary}
\newtheorem{lem}[thm]{Lemma}
\theoremstyle{definition}
\newtheorem{df}[thm]{Definition}
\newtheorem{eg}[thm]{Example}
\newtheorem{rmk}[thm]{Remark}
\newtheorem{notation}[thm]{Notation}
\newtheorem{cons}[thm]{Construction}
\newcommand{\proofpart}[2]{%
  \par
  \addvspace{\medskipamount}%
  \noindent\underline{\bf Step #1: #2}\par\nobreak
  \addvspace{\smallskipamount}%
  \@afterheading
}
\numberwithin{equation}{section}
\definecolor{terblue}{HTML}{5371C6}
\definecolor{haoblue}{HTML}{83CCEB}
\definecolor{terpink}{HTML}{FF9497}
\titleformat{\section}[block]
{\normalfont\Large\bfseries\filcenter}
{\thesection}
{8pt}{}
\titleformat{\subsubsection}[block]
{\bfseries\large\normalsize}
{}
{0pt}{}
\title{\LARGE \textbf{A Weight Structure on Rigid Analytic Motives\\ over a Field}} 
\author{Kaixing Cao}
\date{\today}
\begin{document}

\maketitle
\begin{abstract}
  In this paper, we construct a monoidal weight structure on the stable $\infty$-category of rigid analytic motives over a local field $K$ via Galois descent. This extends the weight structure on the full subcategory of rigid analytic motives with good reduction, which is defined by Binda-Gallauer-Vezzani. As an application, we show that the Hyodo-Kato realization factors through the weight complex functor studied by Bondarko and Sosnilo. In particular, the weight complex yields a spectral sequence converging to the Hyodo-Kato cohomology of smooth quasi-compact $K$-rigid analytic spaces, thereby inducing a weight filtration on it.
\end{abstract}

\tableofcontents\thispagestyle{empty}


\section{Introduction}
\label{sec:introduction}
Bondarko introduced weight structures on triangulated categories in \cite{Bon10a} as a concept dual to that of $t$-structures (see Remark \ref{rmk:compare-to-t-structure}). Subsequently, Sosnilo developed the theory in the context of stable $\infty$-categories in \cite{Sos19, Sos22}.

One of the main motivations for introducing weight structures is precisely their application to the study of motives: in \cite{Bon10a,Heb10,Bon14}, a weight structure, called the Chow weight structure, was constructed on the stable $\infty$-category $\agmot(k)$ of motives (with rational coefficients) over a field $k$. Since the construction of motivic $t$-structures is very difficult and remains largely open, weight structures provide a more accessible framework for organizing and analyzing motives.

On the other hand, Ayoub developed a theory of rigid analytic motives in \cite{ayorigmot,AGV22}, providing a natural analogue of motives in rigid analytic geometry. This naturally raises the question of whether weight structures can also be constructed in this rigid analytic setting. As observed in \cite{BGV25} and \cite[Appendix A]{BKV25}, starting from the Chow weight structure on the algebraic motives, one can construct a good weight structure on a full subcategory $\rigmot_{\mathrm{gr}}(K)$ of the $\infty$-category of rigid analytic motives, i.e., consisting of those that admit good models; see Definition \ref{df:rigmotgr}. Even though the weight structure is defined only on this subcategory, it allows us to give the ``motivic Hyodo-Kato isomorphism'' (\cite[Theorem 4.53]{BGV25}) and get a weight filtration on the Hyodo-Kato cohomologies as a formal result. These indicate that weight structures on rigid analytic motives provide a powerful tool for connecting motivic ideas with $p$-adic cohomology theories.

In this paper, we will extend the weight structure on the full subcategory $\rigmot_{\mathrm{gr}}(K)$ to the whole category. Furthermore, this extended weight structure remains compatible with the symmetric monoidal structure on $\rigmot(K)$ in the sense of \cite{Aokwt}.

\begin{thm}[{Theorem \ref{thm:wt-str-rigmotcat}}]
  \label{thm:intro-wtstr}
  Let $K$ be a complete non-archimedean field with perfect residue field $k$. There is a bounded weight structure $w$ on the stable $\infty$-category $\rigmot(K)_{\omega}$ of compact rigid analytic motives over $K$, and this bounded weight structure is compatible with the monoidal structure and extends the one on $\rigmot_{\mathrm{gr}}(K)_{\omega}$ mentioned above.

  Moreover, it can be extended uniquely to its Ind-completion $\rigmot(K)$ such that the full subcategory $\rigmot(K)_{w \ge 0}$ is closed under small colimits.
\end{thm}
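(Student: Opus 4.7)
The strategy is to bootstrap from the monoidal weight structure on the good-reduction subcategory $\rigmot_{\mathrm{gr}}(K)_{\omega}$ provided by Binda--Gallauer--Vezzani, enlarging it to all of $\rigmot(K)_{\omega}$ by Galois descent along the maximal unramified extension. The key geometric input should be that over the completion $\hat{K^{\mathrm{un}}}$ of the maximal unramified extension of $K$, every compact rigid analytic motive becomes of good reduction up to finite sums, retracts, and shifts; this should follow from the $\mathrm{h}$-descent enjoyed by $\rigmot$ together with a semistable reduction or alteration-type theorem for rigid analytic spaces.

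Concretely, the plan has three steps. First, for each finite separable extension $L/K$, I would organize the good-reduction part inside $\rigmot(L)_{\omega}$ into a negative subcategory in the sense of Bondarko--Sosnilo (i.e.\ satisfying $\map(X, Y[n]) = 0$ for $n > 0$ between generators), so that its thick closure carries a canonical bounded weight structure. Second, I would verify that for a finite extension $L'/L$ the base change $\rigmot(L)_{\omega} \to \rigmot(L')_{\omega}$ is weight-exact, so that the weight structures assemble, along the filtered system of finite unramified extensions of $K$, into a weight structure on $\rigmot(\hat{K^{\mathrm{un}}})_{\omega} \simeq \colim_L \rigmot(L)_{\omega}$. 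Third, using the continuous descent equivalence $\rigmot(K)_{\omega} \simeq \rigmot(\hat{K^{\mathrm{un}}})_{\omega}^{h G_k}$ with $G_k = \gal(\bar{k}/k)$, I would check that $G_k$ acts by weight-exact autoequivalences and push the weight structure down to $K$.

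Monoidal compatibility in the sense of Aoki reduces, via this construction, to closure of the negative subcategory under tensor product on the good-reduction piece, which is already known. Boundedness is built in, since every generator sits in weight zero by construction. Finally, the extension to the Ind-completion $\rigmot(K)$ with $\rigmot(K)_{w \ge 0}$ closed under small colimits is a formal consequence of the general result of Sosnilo that a bounded weight structure on a small idempotent-complete stable $\infty$-category extends uniquely to its Ind-completion satisfying this colimit property.

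The main obstacle is the descent step. One needs more than that $G_k$ permutes the negative generators: the weight truncation functors must be compatible with the profinite continuous action at the full $\infty$-categorical level, which requires careful use of the continuous descent framework available for $\rigmot$. Closely related, and equally delicate, is the geometric input that every compact object of $\rigmot(L)$ becomes of good reduction after a sufficiently large finite extension --- the rigid analogue of the fact underlying the Chow weight structure on $\agmot(k)_{\omega}$ --- whose justification will rely on strong alteration or semistable reduction results for rigid analytic varieties.
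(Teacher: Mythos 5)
There is a genuine gap, and it sits exactly at the geometric input you flag as needing justification. The claim that every compact object of $\rigmot(K)$ becomes of good reduction after base change to the completion $\hat{K^{\mathrm{un}}}$ of the maximal unramified extension is false. Ayoub's potentially-good-reduction theorem (\cite[Theorem 2.5.34]{ayorigmot}, \cite[Proposition 3.7.17]{AGV22}) produces a \emph{finite separable} extension which is in general \emph{ramified}, and ramification cannot be avoided: for instance, the motive $e_{*}\mathbbm{1}$ of $\spa(L')$ for a tamely ramified cyclic extension $L'/\hat{K^{\mathrm{un}}}$ decomposes into Artin-type summands on which inertia acts through nontrivial finite-order characters, and such objects do not lie in $\rigmot_{\mathrm{gr}}(\hat{K^{\mathrm{un}}})$ (whose realizations have unipotent inertia action), nor do they become good-reduction after further unramified extensions, since there are none. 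Consequently your proposed generators do not generate $\rigmot(\hat{K^{\mathrm{un}}})_{\omega}$, and the descent along $G_k=\gal(\bar{k}/k)$ alone cannot reach all of $\rigmot(K)_{\omega}$. This is precisely why the paper's decomposition is
\[
\rigmot(K) \simeq \colim_{L/K \text{ finite Galois}} \rigmot_{\mathrm{gr}}(L)^{\htcat \gal(L/K)},
\]
with the \emph{full} Galois group (inertia included) acting on the good-reduction subcategory over $L$, rather than a fixed-point construction over the unramified tower; the ramification index $e_{L/K}$ appearing in the key formula $\iota_{L}\Nm_{L}(\xi_{L}M(X)) \simeq \bigoplus_{e_{L/K}} \xi_{L}M(X)$ records exactly the phenomenon your route discards.

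Relatedly, the step you describe as ``push the weight structure down'' through homotopy fixed points is not formal even for finite groups, and in the paper it is where the real work happens: one must show that the fixed-point category is compactly generated with an explicit negative family of generators. The paper does this by observing that the Galois action is by adjoint autoequivalences, so the limit can be computed in $\prrc$ and its compact generators are the images $\Nm_{L}\xi_{L}M(X)$ under the norm (left adjoint of the forgetful functor); negativity of these generators is then checked by the adjunction together with the displayed splitting formula, and monoidal compatibility by the projection formula for $\Nm_{L}\dashv \iota_{L}$. Your outline asserts weight-exactness of the $G_k$-action and compatibility of weight truncations with a profinite continuous action, but gives no mechanism replacing this norm computation; combined with the failure of the good-reduction claim over $\hat{K^{\mathrm{un}}}$, the proposed route does not go through as stated. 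The remaining ingredients you cite (negativity on the good-reduction piece, boundedness from generators in the heart, and the unique extension to the Ind-completion with $\rigmot(K)_{w\ge 0}$ closed under colimits) do match the paper's use of the Bondarko--Sosnilo machinery.
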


To construct the weight structure in Theorem \ref{thm:intro-wtstr}, we apply Galois descent to relate compact analytic motives over $K$ with analytic motives with good models: every compact rigid motive over $K$ has potentially good reduction (\cite[Theorem
2.5.34]{ayorigmot}, \cite[Proposition 3.7.17]{AGV22}). In particular, the full subcategory of compact analytic motives can be described
as a ``union'' of rigid analytic motives with good reduction up to finite Galois extensions. We rephrase this phenomenon with the following explicit form:

\begin{pro}[{Proposition \ref{pro:rigmot-cptgen-pgd}}]
  \label{pro:intro-cmot-pgr}
  We have a monoidal equivalence in $\calg(\prlc)$
  \[
\rigmot(K) \simeq \colim \rigmot_{\mathrm{gr}}(L)^{\htcat \gal (L/K)}
\]
where $L$ runs through finite Galois extensions of $K$ and terms on the right-hand side are $\gal (L/K)$-equivariant analytic motives (see Definition \ref{df:ht-fix-pt} and Remark \ref{rmk:htfpt-practical-case}).
\end{pro}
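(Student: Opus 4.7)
The strategy is to combine Galois descent for rigid analytic motives with the potential good reduction theorem cited above. In broad outline: for each finite Galois $L/K$, Galois descent identifies $\rigmot(K)$ with a fixed point category of $\rigmot(L)$; potential good reduction then shows that every compact object of $\rigmot(K)$ already arrives from the subcategory $\rigmot_{\mathrm{gr}}(L)^{\htcat G}$ once $L$ is large enough; assembling over all $L$ produces the filtered colimit expression.

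First I would establish Galois descent in the monoidal setting: for a finite Galois extension $L/K$ with group $G := \gal(L/K)$, the base change functor $\rigmot(K) \to \rigmot(L)$ is a morphism in $\calg(\prlc)$, the target carries a canonical $G$-action, and étale descent for rigid analytic motives (in the sense of Ayoub--Gallauer--Vezzani) promotes this to a symmetric monoidal equivalence $\rigmot(K) \simeq \rigmot(L)^{\htcat G}$ in $\calg(\prlc)$. Next, invoking the potential good reduction theorem, for every $M \in \rigmot(K)_{\omega}$ there is a finite Galois $L/K$ with $M_L \in \rigmot_{\mathrm{gr}}(L)$. Under the descent equivalence, $M$ corresponds to $M_L$ equipped with its tautological $G$-equivariant datum, so $M$ lies in the essential image of the fully faithful embedding $\rigmot_{\mathrm{gr}}(L)^{\htcat G} \hookrightarrow \rigmot(L)^{\htcat G} \simeq \rigmot(K)$.

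To assemble the colimit, observe that for a tower $L \subset L'$ of finite Galois extensions of $K$ with groups $G$ and $G'$, base change yields a symmetric monoidal transition functor $\rigmot_{\mathrm{gr}}(L)^{\htcat G} \to \rigmot_{\mathrm{gr}}(L')^{\htcat G'}$ in $\calg(\prlc)$. Filtered colimits in $\prlc$ are computed by Ind-completing the filtered colimit of subcategories of compact objects, and the previous step identifies $\rigmot(K)_{\omega}$ with the union of the images of $\rigmot_{\mathrm{gr}}(L)^{\htcat G}_{\omega}$ as $L$ varies. Ind-completion then yields the desired equivalence $\rigmot(K) \simeq \colim_L \rigmot_{\mathrm{gr}}(L)^{\htcat G}$; the monoidal refinement is automatic, since each transition functor and the descent equivalence are themselves symmetric monoidal.

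The main obstacle will lie in the first step. Galois descent on underlying triangulated categories is standard, but promoting it to an equivalence in $\calg(\prlc)$ requires the $\gal(L/K)$-action on $\rigmot(L)$ and the descent morphism to be organized with sufficient coherence. The notation $\htcat \gal(L/K)$ and the ``practical'' reformulation hinted at in Remark \ref{rmk:htfpt-practical-case} suggest that the paper uses a fixed point construction lighter than a fully coherent $B\gal(L/K)$-diagram; verifying that this practical version nevertheless witnesses genuine Galois descent in the symmetric monoidal setting is where the most delicate technical work will be concentrated.
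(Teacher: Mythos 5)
Your overall route --- Galois descent $\rigmot(K)\simeq\rigmot(L)^{\htcat \gal(L/K)}$, potential good reduction to place each compact motive in some $\rigmot_{\mathrm{gr}}(L)^{\htcat \gal(L/K)}$, then assembling the filtered colimit of compact parts and Ind-completing --- is exactly the paper's (Propositions \ref{pro:galois-descent-mot}, \ref{pro:galois-descent-mot-gdrd} and \ref{pro:rigmot-cptgen-pgd}). However, the part you treat as routine bookkeeping is where the actual work lies, and as written there is a gap. For the colimit to be formed in $\calg(\prlc)$, and for your reduction ``filtered colimits in $\prlc$ are Ind-completions of colimits of compact parts'' to apply, you need: (i) each $\rigmot_{\mathrm{gr}}(L)^{\htcat \gal(L/K)}$ to be compactly generated; (ii) its compact objects to remain compact in $\rigmot(K)$ under the fully faithful embedding; and (iii) the transition functors for $K\subseteq F\subseteq L$ to restrict to the good-reduction fixed-point subcategories and to preserve compact objects. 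None of this is formal: a homotopy fixed-point category is a limit in $\prl$, and such limits of compactly generated categories need not be compactly generated, nor does a colimit-preserving fully faithful inclusion automatically preserve compacts. The paper settles (i) by exploiting that the Galois action is by equivalences, so the limit may be computed in $\prrc$, where Proposition \ref{pro:lim-in-prrc} produces compact generators of the form $\Nm_L\xi_L M(X)$ with $X$ proper smooth over $k_L$ (Proposition \ref{pro:cpt-gen-gal-inv-gd}); it settles (ii) and (iii) via the explicit norm computation $\iota_L\Nm_L\xi_L M(X)\simeq\bigoplus_{e_{L/K}}\xi_L M(X)$ of Proposition \ref{pro:proper-sm-under-lad} and the base-change computation \eqref{eq:gal-inv-wt} in Corollary \ref{cor:galcolim-transmap}, which identify the generators with motives of the form $e_{*}\xi_L M(X)$ and show they stay compact and of good reduction after extension. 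Your proposal simply asserts that the descent equivalence and the transition functors lie in $\calg(\prlc)$, so the assembly step does not yet go through without supplying these arguments.

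A smaller point: the obstacle you single out at the end is not where the difficulty is. The paper does use a fully coherent action, namely a functor $B_{\bullet}\gal(L/K)\to\calg(\prl)$ built via the homotopy coherent nerve (Remark \ref{rmk:htfpt-practical-case}), and descent itself is quick: writing $B_{\bullet}\gal(L/K)$ as a geometric realization (Lemma \ref{lem:BG-georeal}) exhibits the homotopy fixed points as the limit of the \v{C}ech nerve of $e^{*}$, so \'etale descent for $\rigmot(-)$ applies directly, and the monoidal refinement follows since the forgetful functor $\calg(\prl)\to\prl$ preserves and reflects limits (Proposition \ref{pro:clim-calgpr}). The genuinely delicate input is the compact-generation analysis described above, not the coherence of the Galois action.
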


Therefore, it suffices to construct a weight structure on $\rigmot_{\mathrm{gr}}(L)^{\htcat \gal (L/K)}$ and glue it to obtain a weight structure on $\rigmot(K)$. The former category is the homotopy fixed points under the Galois action; in other words, it is the limit of $\rigmot_{\mathrm{gr}}(L)$ along the Galois action. Although in general it is very difficult to equip a limit of stable $\infty$-categories with a weight structure, the situation here is special: the translation functors involved are autoequivalences. Thus, we can give an explicit formula for its compact generators, and prove they form a negative class (see Proposition \ref{pro:cpt-gen-gal-inv-gd} and Proposition \ref{pro:proper-sm-under-lad}). In particular, we get a bounded weight structure on its compact part (Proposition \ref{pro:bdwt-from-heart}), which can be extended to the whole category as usual (discussed in \S \ref{subsec:cons-wtstr}).

As an application of Theorem \ref{thm:intro-wtstr}, we can construct a p-adic analytic generalization of
Rapoport-Zink’s weight spectral sequence together with weight filtration in \cite{RZ82}. The key ingredient that allows us to obtain such filtrations from (bounded) weight structures
is the weight complex functor studied in \cite{Bon10a,Sos19}: there is a symmetric monoidal functor
\begin{equation}
  \label{eq:intro-wtcplx}
  \subss{W} \colon \rigmot(K)_{\omega} \to \bkch(\htcat \mathcal{H}_{w})
\end{equation}
where $\bkch(\htcat \mathcal{H}_w)$ is the $\infty$-category of bounded chain complexes in the heart $\htcat \mathcal{H}_w$ of the weight structure constructed in Theorem \ref{thm:intro-wtstr}.

Now we suppose the residue field $k$ of $K$ is a finite field. Under this assumption, we show that the Hyodo-Kato realization functor factors through the weight complex functor \eqref{eq:intro-wtcplx}:

\begin{pro}[Lemma \ref{lem:GHK-wtcplx}]
  \label{pro:intro-HK-wtcplx}
  After forgetting the monodromy operator, the Hyodo-Kato realization functor \eqref{eq:phi-GK-HK}
  \[
\rgama{\mathrm{HK}} \colon \rigmot(K)_{\omega} \to \bdcat_{(\varphi,G_{K})} (\breve{K})
\]
factors through the weight complex functor. Here $\bdcat_{(\varphi,G_{K})} (\breve{K})$ is the derived $\infty$-category of bounded $(\varphi,G_K)$-modules over $\breve{K}$, in the sense of Fontaine.
\end{pro}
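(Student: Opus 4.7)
The plan is to apply the universal property of the weight complex functor in the form developed by Sosnilo. First I would restrict $\rgama{\mathrm{HK}}$ to the heart $\htcat \mathcal{H}_w$ of the weight structure from Theorem \ref{thm:intro-wtstr}. By Proposition \ref{pro:intro-cmot-pgr}, after passage to a finite Galois extension $L/K$, objects of $\htcat \mathcal{H}_w$ come from Chow motives of smooth proper formal models over the residue field of $L$ equipped with a Galois descent datum; on such objects $\rgama{\mathrm{HK}}$ is given by the crystalline cohomology of the special fiber with its Frobenius, together with the descended $G_K$-action. This assembles into an additive functor $R_0 \colon \htcat \mathcal{H}_w \to \bdcat_{(\varphi,G_K)}(\breve{K})$.

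Next I would use the universal property of $\bkch(\htcat \mathcal{H}_w)$ as the free stable $\infty$-category on the additive $\infty$-category $\htcat \mathcal{H}_w$: restriction along the inclusion $\htcat \mathcal{H}_w \hookrightarrow \bkch(\htcat \mathcal{H}_w)$ identifies exact functors into any stable $\infty$-category $\mathcal{D}$ with additive functors $\htcat \mathcal{H}_w \to \mathcal{D}$. Applied to $R_0$, this produces a unique exact $\widetilde{R} \colon \bkch(\htcat \mathcal{H}_w) \to \bdcat_{(\varphi,G_K)}(\breve{K})$, and by construction the composite $\widetilde{R} \circ \subss{W}$ agrees with $\rgama{\mathrm{HK}}$ on $\htcat \mathcal{H}_w$.

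It remains to propagate this equivalence from the heart to all of $\rigmot(K)_{\omega}$; this is where I expect the main technical work. The universal property of the weight complex functor in \cite{Sos19} characterizes $\subss{W}$ as the essentially unique exact extension to $\rigmot(K)_{\omega}$ of the inclusion $\htcat \mathcal{H}_w \hookrightarrow \bkch(\htcat \mathcal{H}_w)$, but transferring this characterization to factorizations valued in $\bdcat_{(\varphi,G_K)}(\breve{K})$ is not immediate. I would approach it by reducing, via Proposition \ref{pro:intro-cmot-pgr}, to the good-reduction case on $\rigmot_{\mathrm{gr}}(L)_{\omega}$ for a sufficiently large finite Galois extension $L/K$, where an analogous factorization is already available from the Chow weight structure together with the construction of $\rgama{\mathrm{HK}}$ via formal models, and then descending the resulting natural equivalence along the Galois action using the monoidal compatibility and colimit presentation provided by Theorem \ref{thm:intro-wtstr}.
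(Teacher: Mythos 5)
The approach diverges from the paper's and contains a genuine gap. The paper's proof of Lemma~\ref{lem:GHK-wtcplx} does not go through a universal property of $\bkch(\htcat\mathcal{H}_K)$; it invokes a concrete factorization criterion (\cite[Corollary 3.30]{BGV25}): an exact functor out of a boundedly weighted stable $\infty$-category factors through $\subss{W}$ precisely when the image of the heart is a \emph{negative} subcategory of the target. The paper then verifies this negativity for the generators $e_*\xi_L M(X)$ of $\mathcal{H}_K$ (Corollary~\ref{cor:pos-neg-rigwt}(3)) by computing $\rgama{\mathrm{HK}}(e_*\xi_L M(X))$ explicitly via the projection formula~\eqref{eq:HK-K-heart}, pushing everything into a common finite Galois extension $F/K$, and then using the known negativity in the good-reduction case (\cite[Example 3.31]{BGV25}) together with the $\Q$-linear homotopy fixed point formula from Corollary~\ref{cor:mapsp-htfixpt} to kill the higher $\pi_i$.

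Your argument instead builds a candidate $\widetilde{R}\colon \bkch(\htcat\mathcal{H}_w)\to\bdcat_{(\varphi,G_K)}(\breve K)$ from the restriction of $\rgama{\mathrm{HK}}$ to the heart and then needs to show $\widetilde R\circ\subss{W}\simeq\rgama{\mathrm{HK}}$. The gap is in the last step, which you acknowledge but do not close: agreement of two exact functors on a subcategory that generates $\rigmot(K)_{\omega}$ under finite colimits and shifts does \emph{not} imply they are equivalent, because the coherence data (what the functors do to mapping spectra and their higher homotopy) is not controlled by the restriction to the heart. Indeed this is exactly the content being proved: $\subss{W}$ collapses the higher homotopical structure of $\rigmot(K)_{\omega}$, and whether $\rgama{\mathrm{HK}}$ also does so is precisely the negativity condition --- without it the factorization can genuinely fail. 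Your final paragraph gestures at ``descending a natural equivalence from the good-reduction case along the Galois action,'' but this is not made precise and does not by itself produce the comparison; the descent one actually needs is of the \emph{vanishing} statement $\pi_i\map_{(\varphi,G_K)}(\rgama{\mathrm{HK}}(P),\rgama{\mathrm{HK}}(Q))=0$ for $i>0$ and $P,Q$ in the heart, which the paper carries out via Corollary~\ref{cor:mapsp-htfixpt} and equation~\eqref{eq:HK-K-heart}. I would also be careful with the asserted universal property: $\bkch(\mathcal{A})$ is universal for \emph{weight-exact} functors into weighted stable $\infty$-categories by Sosnilo's embedding theorem (Remark~\ref{rmk:bdwt-heart}), not in general for arbitrary exact functors into arbitrary stable targets, so even the construction of $\widetilde{R}$ as stated needs justification beyond the reference given.
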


Using the standard spectral sequence associated to the weight complex, see \cite[\S 2]{Bon10a} for instance, of a compact rigid motive $M$ over $K$, we get a spectral sequence converging to the Hyodo-Kato cohomology of $M$. In particular, we obtain the weight filtration on it:

\begin{pro}[Weight Filtration, {Proposition \ref{pro:wtfil}, Corollary \ref{cor:wt-fil-rigsp}}]
  \label{pro:intro-wtfil}
  Let $X$ be a smooth quasi-compact rigid analytic space (or more generally, a compact rigid analytic motive) over $K$. Then, for every integer $n\ge 0$, there is a finite increasing filtration $\mathrm{Fil}_{\bullet}^{W}H_{\mathrm{HK}}^n(X) $ on the arithmetic overconvergent Hyodo-Kato cohomology of $X$ satisfying:
  \begin{enumerate}
  \item the filtration is stable under the Frobenius operator and Galois action on $H^n_{\mathrm{HK}}(X)$;
  \item the $k$-th graded piece of the filtration is pure of weight $n+k$, i.e., the Frobenius has eigenvalues with complex
norm $\abs{k}^{(n+k)/2}$;
  \item the monodromy on the cohomology induces a map $\mathrm{gr}_{k}^{W} H_{\mathrm{HK}}^{n}(X) \to \mathrm{gr}_{k-2}^{W} H_{\mathrm{HK}}^{n}(X)$.
  \end{enumerate}
\end{pro}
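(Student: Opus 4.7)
The plan is to extract the filtration directly from the weight complex functor \eqref{eq:intro-wtcplx}. It suffices to prove the motivic version: given a compact rigid motive $M$ over $K$, I will filter $H^n_{\mathrm{HK}}(M)$; the rigid-space statement then follows by taking $M = M(X)$, which is compact since $X$ is smooth and quasi-compact. By Proposition \ref{pro:intro-HK-wtcplx}, $\rgama{\mathrm{HK}}(M)$ is the totalization of the bounded chain complex of $(\varphi, G_K)$-modules obtained by applying $\rgama{\mathrm{HK}}$ to each term of $\subss{W}(M)$. The naive filtration of this bounded complex yields a finite, convergent spectral sequence
\[
E_1^{p,q} = H^q_{\mathrm{HK}}\bigl(W_{-p}(M)\bigr) \;\Longrightarrow\; H^{p+q}_{\mathrm{HK}}(M),
\]
and I would define $\mathrm{Fil}_\bullet^W H^n_{\mathrm{HK}}(M)$ to be the induced abutment filtration, reindexed so that $\mathrm{gr}_k^W H^n_{\mathrm{HK}}(M) \simeq E_\infty^{-k,\,n+k}$. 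Finiteness of the filtration is automatic from the boundedness of $\subss{W}(M)$.

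Properties (1) and (3) are essentially formal. For (1), since $\rgama{\mathrm{HK}}$ takes values in $\bdcat_{(\varphi, G_K)}(\breve{K})$, every term of the double complex carries Frobenius and $G_K$-action and all differentials are equivariant, so the filtration is $(\varphi, G_K)$-stable. For (3), the monodromy operator $N$ on Hyodo-Kato cohomology satisfies $N\varphi = p\varphi N$, so it drops Frobenius weight by $2$; once purity is established, $N$ must send $\mathrm{gr}_k^W H^n_{\mathrm{HK}}(M)$, of weight $n+k$, into the weight-$(n+k-2)$ summand, namely $\mathrm{gr}_{k-2}^W H^n_{\mathrm{HK}}(M)$.

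The substantive step, and the main obstacle, is property (2), purity of the graded pieces. Tracing through the construction of the weight structure in Theorem \ref{thm:intro-wtstr} (via Proposition \ref{pro:intro-cmot-pgr}, Proposition \ref{pro:cpt-gen-gal-inv-gd} and Proposition \ref{pro:proper-sm-under-lad}), each object of the heart $\htcat\mathcal{H}_w$ is, up to direct summand, obtained by Galois-equivariant descent along some finite Galois extension $L/K$ from the motive of a smooth proper formal $\mathcal{O}_L$-scheme $\mathfrak{X}$. For such $\mathfrak{X}$, $H^q_{\mathrm{HK}}(\mathfrak{X})$ agrees with the crystalline cohomology of its special fibre over the residue field of $L$, which by Deligne's Weil conjectures is pure of weight $q$. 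Purity is stable under taking Galois-equivariant direct summands and under restriction of scalars from $L$ to $K$ (the complex norms of Frobenius eigenvalues are preserved when passing between $\varphi_{k_L}$ and $\varphi_k$), so $E_1^{-k,\,n+k}$ is pure of weight $n+k$ and this purity passes to the subquotient $\mathrm{gr}_k^W H^n_{\mathrm{HK}}(M)$. The delicate point in this argument is the precise tracking of weights through the descent of Proposition \ref{pro:intro-cmot-pgr}—verifying that the Hyodo-Kato cohomology of a $G_K$-equivariant direct summand of a smooth proper motive over $\mathcal{O}_L$ remains pure as a $(\varphi, G_K)$-module over $\breve{K}$—which is where the finiteness of the residue field $k$ enters essentially.
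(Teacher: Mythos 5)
Your construction is essentially the paper's: you factor the Hyodo--Kato realization through the weight complex (Lemma~\ref{lem:GHK-wtcplx}), filter by the naive truncations of $\subss{W}(M)$, take the associated spectral sequence, and get purity of the $E_1$-terms from the purity of rigid/crystalline cohomology of smooth proper varieties over a finite field (this is Katz--Messing \cite{KM74} rather than the Weil conjectures per se, which is exactly the input behind Proposition~\ref{pro:HK-coh}) combined with the computation of the underlying object of the heart generators $e_{*}\xi_L M(X)$ via Proposition~\ref{pro:proper-sm-under-lad}; stability of purity under equivariant summands and under passage from $L_0$ to the colimit is handled the same way in the paper, by checking on underlying $\varphi$-modules through Corollary~\ref{cor:mapsp-htfixpt}. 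The one genuine divergence is item (3): the paper proves the monodromy statement by descending $M$ into $\rigmot_{\mathrm{gr}}(L)^{\htcat \gal(L/K)}$ for a suitable finite Galois $L/K$, using weight-exactness of that inclusion (Theorem~\ref{thm:wt-str-rigmotcat}(4)) to identify the spectral sequence with the one over $L$, and then quoting the explicit computation for the good-reduction weight spectral sequence in \cite{BGV25}; you instead deduce it formally from purity and $N\varphi=p\varphi N$. Your argument works, but you should make explicit the step it silently uses: because the graded pieces have pairwise distinct weights, the filtration on the underlying $\varphi$-module is forced to coincide with the (canonically split) filtration by Frobenius weights, so $N$, which lowers weight by $2$, satisfies $N(\mathrm{Fil}^W_k)\subseteq \mathrm{Fil}^W_{k-2}$ and hence genuinely induces maps on graded pieces --- saying ``$N$ sends $\mathrm{gr}^W_k$ into the weight-$(n+k-2)$ summand'' presupposes this splitting. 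With that made precise, your route for (3) is more elementary and self-contained than the paper's (it avoids the reduction to \cite{BGV25}), though it does not by itself recover the sharper form with the Tate twist $\mathrm{gr}^W_k \to \mathrm{gr}^W_{k-2}(-1)$ recorded in Proposition~\ref{pro:wtfil}; conversely, you do not prove $E_2$-degeneration, which the paper includes but which the statement under review does not require (and which would follow from the same purity argument).
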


\begin{rmk}
If $X$ has semistable reduction, then the weight filtration in the previous proposition has already been given in \cite{BGV25}. However, we do not assume any formal model of $X$ here, because we have the extended weight structure in Theorem~\ref{thm:intro-wtstr}.
\end{rmk}

{\paragraph{Notations and Conventions.}%
  We will freely use the language of $\infty$-categories and follow notations in \cite{HTT,HA}. But we don't treat set-theoretic issues carefully\footnote{Those seeking a more precise treatment of set-theoretic matters may refer to \cite{HTT,HA,kerodon}, especially \cite[\href{https://kerodon.net/tag/03PP}{Section 03PP}]{kerodon}.} for convenience. We assume the Grothendieck universe axioms and fix two Grothendieck universes $\mathcal{U}$ and $\mathcal{V}$ with $\mathcal{U} \in \mathcal{V}$. We will refer to a mathematical object as small if it belongs to $\mathcal{U}$; in particular, we talk about presentable $\infty$-categories with respect to $\mathcal{U}$.

  We will denote by $\bigcatinf$ the $\infty$-category of locally small $\infty$-categories. In particular, we will only use the $\infty$-category of small spaces, denoted by $\mathcal{S}$ (a.k.a., $\infty$-category of anima or $\infty$-groupoids), and its stabilization, called the $\infty$-category of spectra, will be denoted by $\SP$. We denote by $\prl$ (resp. $\prr$) the sub-$\infty$-category of $\bigcatinf$ spanned by presentable $\infty$-categories and left adjoints (resp. right adjoints). Furthermore, we let $\prlc$ (resp. $\prrc$) denote the sub-$\infty$-categories of $\prl$ (resp. $\prr$) spanned by compactly generated $\infty$-categories and compact-preserving functors (resp. functors commuting with filtered colimits). The full subcategory of $\prl$ (resp. $\prlc$) spanned by presentable (resp. compactly generated) stable $\infty$-categories will be denoted by $\prst$ (resp. $\prstc$).

  Let $\mathcal{C} \in \bigcatinf$. Its homotopy category will be denoted by $\htcat \mathcal{C}$. Let $X, Y$ be objects in $\mathcal{C}$. We will denote by $\map_{\mathcal{C}}(X,Y)$ (resp. $\mathrm{map_{\mathcal{C}}(X,Y)}$), or simply $\map(X,Y)$ (resp. $\mathrm{map}(X,Y)$) if $\mathcal{C}$ is clear from the context, the mapping space (resp. the mapping spectrum) from $X$ to $Y$ in $\mathcal{C}$. We will also write $\pi_0 \map_{\mathcal{C}}(X,Y) \cong \hom_{\htcat \mathcal{C}}(X,Y)$ simply as $\hom_{\mathcal{C}}(X,Y)$. We denote by $\mathcal{C}_{\omega}$ the full subcategory of $\mathcal{C}$ spanned by compact objects.

  All schemes, formal schemes and rigid analytic spaces are assumed to be ($\mathcal{U}$)-small. For motives, we only use \'{e}tale motives with rational coefficients; for homological algebras, our conventions are homological throughout.
}

{\paragraph{Acknowledgments.}%
  The author would like to sincerely thank his advisor, Alberto Vezzani, for suggesting the topic of this paper and for many helpful conversations throughout the development of the work. He is also grateful to Veronika Ertl for valuable suggestions on the exposition and for comments that improved the clarity of the writing.
}
\section{\texorpdfstring{Preliminaries on $\infty$-Category
    Theory}{Preliminaries on Infinite-Category Theory}}
In this section, we recall some results of $\infty$-categories. In \S \ref{subsec:htfixpt}, we review the theory of group actions and their fixed points in the setting of $\infty$-categories by following \cite[\S 6.1.6]{HA} and \cite[Chapter I]{NSTCH18}. This part will be useful for us to construct weight structures on rigid analytic motives.

\subsection{\texorpdfstring{Limits and Colimits of $\infty$-Categories}{Limits and Colimits of Infinite-Categories}}
\label{subsec:colim-infcat}
We begin by recalling how to compute the limits/colimits of presentable $\infty$-categories.

\begin{pro}[{\cite[Proposition 5.5.3.13, Theorem 5.5.3.18]{HTT}}]
  \label{pro:lim-prl-prr}
  The $\infty$-categories $\prl$ and $\prl$ admit small
  limits. Moreover, the inclusion functors
  \[
\prl \hookrightarrow \bigcatinf, \quad \prr \hookrightarrow \bigcatinf
\]
preserve small limits.
\end{pro}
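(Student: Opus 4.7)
The plan is to leverage the duality $(\prl)^{\opp} \simeq \prr$, given by sending a left adjoint to its right adjoint, and reduce both statements to a single construction: form the limit of a diagram of presentable $\infty$-categories inside $\bigcatinf$, then show the resulting $\infty$-category is presentable and that the projection functors have the correct adjointness properties to live in $\prr$ (respectively $\prl$).

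First I would set up the concrete model. Given $F \colon K \to \prr$, form $\mathcal{C} := \lim F$ computed in $\bigcatinf$, realized as the $\infty$-category of Cartesian sections of the Cartesian fibration classified by $F$. Because right adjoints preserve limits, limits in $\mathcal{C}$ are computed pointwise from limits in each $F(k)$, so $\mathcal{C}$ admits all small limits and each projection $\pi_k \colon \mathcal{C} \to F(k)$ preserves them.

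Next I would establish presentability of $\mathcal{C}$ by a cardinal-compatibility argument. Choose a regular cardinal $\kappa$ large enough that every $F(k)$ is $\kappa$-accessible and every transition functor $F(\alpha)$ preserves $\kappa$-filtered colimits. With this uniform $\kappa$, $\kappa$-filtered colimits in $\mathcal{C}$ are computed pointwise, and a standard accessibility argument identifies the compatible families of $\kappa$-compact objects as a set of $\kappa$-compact generators. Combined with the existence of all small limits from the previous step, the adjoint functor theorem for presentable $\infty$-categories (\cite[Corollary 5.5.2.9]{HTT}) yields that $\mathcal{C}$ is presentable and that each accessible limit-preserving $\pi_k$ admits a left adjoint, so $\pi_k \in \prr$. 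The universal property in $\prr$ then follows from that in $\bigcatinf$, since a functor between presentable $\infty$-categories lies in $\prr$ iff it is accessible and limit-preserving, and both properties can be tested after composing with the $\pi_k$.

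For $\prl$, I would dualize: a diagram $F \colon K \to \prl$ determines $G \colon K^{\opp} \to \prr$ by taking right adjoints objectwise. Forming $\mathcal{C} := \lim G$ in $\prr$ as above, each projection $\mathcal{C} \to G(k) = F(k)$ has a left adjoint, and these left adjoints assemble into the structure maps exhibiting $\mathcal{C}$ as the limit of $F$ in $\prl$. The key compatibility is that the underlying $\infty$-category of $\mathcal{C}$ also computes $\lim_K F$ inside $\bigcatinf$: concretely, the Cartesian fibration for $G$ and the coCartesian fibration for $F$ are dual, and their respective categories of Cartesian and coCartesian sections coincide. This simultaneously establishes that both inclusions into $\bigcatinf$ preserve small limits.

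The main obstacle I anticipate is the cardinal-juggling in the presentability step: producing a single $\kappa$ that witnesses accessibility of the limit requires simultaneous control over the accessibility rank of every $F(k)$ and the filtration preserved by every transition functor, and one must then argue that compatible families of $\kappa$-compact objects exhaust the limit under $\kappa$-filtered colimits. A secondary subtlety is the explicit identification of limits in $\prl$ with limits of the opposite $\prr$-diagram at the level of underlying $\infty$-categories; this is where unpacking the fibrational descriptions of $F$ and $G$ pays off. Once these points are settled, the remainder follows formally from the adjoint functor theorem.
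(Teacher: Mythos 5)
The paper offers no argument of its own here: the statement is quoted directly from HTT (Proposition 5.5.3.13 for $\prl$, Theorem 5.5.3.18 for $\prr$), so the comparison is with Lurie's proofs. Your treatment of the $\prr$ half follows the same outline as the standard argument: form the limit in $\bigcatinf$, observe that limits are computed pointwise because the transition functors preserve them, invoke accessibility of a limit of accessible $\infty$-categories along accessible functors (this is the genuinely hard input, HTT 5.4.7.3 --- your ``uniform $\kappa$'' will in general have to be enlarged before the compatible families of $\kappa$-compact objects generate, but that is exactly the cardinal bookkeeping you flag), and then use the adjoint functor theorem to see that the projections lie in $\prr$ and that membership in $\prr$ can be tested after composing with them. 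That half is fine as a sketch.

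The genuine gap is the $\prl$ half: the duality reduction does not prove the statement, and its ``key compatibility'' is false. Passing to the right-adjoint diagram $G\colon K^{\opp}\to\prr$ and forming $\mathcal{C}=\lim G$ produces, under the equivalence $\prr\simeq(\prl)^{\opp}$, the \emph{colimit} of $F$ in $\prl$, not its limit: the left adjoints of the projections are functors $F(k)\to\mathcal{C}$, i.e.\ they form a cocone, not a cone. Concretely, the coCartesian fibration classified by $F$ and the Cartesian fibration classified by $G$ are the same bifibration over $K$, but its coCartesian sections compute $\lim_{K}F$ while its Cartesian sections compute $\lim_{K^{\opp}}G$, and these differ: take $K=\Delta^1$ and any adjunction $f\dashv g$ between non-equivalent presentable $\infty$-categories $\mathcal{A}$, $\mathcal{B}$; then $\lim_{K}F\simeq\mathcal{A}$ (the value at the initial vertex) whereas $\lim_{K^{\opp}}G\simeq\mathcal{B}$. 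The two assertions of the proposition are not dual to one another --- each is dual to the existence of \emph{colimits} in the other category. The repair is to argue for $\prl$ directly, in parallel with your $\prr$ argument but with limits and colimits exchanged: form $\lim_{K}F$ in $\bigcatinf$ along the left adjoints themselves, note that small colimits are computed pointwise since the transition functors preserve them, get accessibility from the same limit theorem, conclude presentability, and apply the adjoint functor theorem to the colimit-preserving projections; this is exactly how HTT proves 5.5.3.13, with 5.5.3.18 handled by a separate, parallel argument rather than by duality.
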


\begin{pro}[{\cite[Proposition 5.5.7.6 and Its Proof]{HTT}}]
  \label{pro:lim-in-prrc}
  The $\infty$-category $\prrc$ admits small limits and the inclusions $\prrc \hookrightarrow \prr \hookrightarrow \bigcatinf$ preserve small limits. Moreover, let
  $p\colon K \to \prrc$ be a diagram of compactly generated
  $\infty$-categories $\{\mathcal{C}_{\alpha}\}$ with a limit
  $\mathcal{C}$ in $\prr$, then $\mathcal{C}$ is also compactly
  generated by images of $\mathcal{C}_{\alpha,\omega}$ under $F_{\alpha}$, where
  $F_{\alpha}\colon \mathcal{C}_{\alpha} \to \mathcal{C}$ is the left
  adjoint of the canonical functor $G_{\alpha}\colon \mathcal{C} \to \mathcal{C}_{\alpha}$.
\end{pro}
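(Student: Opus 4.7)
\textbf{The plan} is to build everything on top of Proposition \ref{pro:lim-prl-prr}: first form the limit $\mathcal{C}$ of the diagram $p\colon K\to\prrc$ in $\prr$, which by that proposition is simultaneously the limit in $\bigcatinf$. It then remains to verify (i) that $\mathcal{C}$ is compactly generated, (ii) that each canonical projection $G_\alpha\colon \mathcal{C}\to \mathcal{C}_\alpha$ lies in $\prrc$, i.e.\ preserves filtered colimits, and (iii) that the cone $(G_\alpha)_\alpha$ is universal among $\prrc$-cones. Once these are in hand, the inclusion $\prrc\hookrightarrow\prr$ preserves the limit by construction, and composition with Proposition \ref{pro:lim-prl-prr} gives preservation into $\bigcatinf$.

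\textbf{The first step} is to identify filtered colimits in $\mathcal{C}$. Since every transition functor in $p$ belongs to $\prrc$, hence preserves filtered colimits, the description of limits in $\bigcatinf$ (as the $\infty$-category of compatible families $(X_\alpha)_{\alpha\in K}$) lets me compute filtered colimits in $\mathcal{C}$ componentwise. This immediately yields (ii); and it gives (iii) too, because any competing $\prrc$-cone $(G'_\alpha\colon \mathcal{D}\to \mathcal{C}_\alpha)_\alpha$ factors through $\mathcal{C}$ as a unique morphism in $\prr$ by the universality in $\prr$, and this induced functor again preserves filtered colimits by the same componentwise description.

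\textbf{Next}, I would produce explicit compact generators. By the adjoint functor theorem, each $G_\alpha$ admits a left adjoint $F_\alpha\colon \mathcal{C}_\alpha\to\mathcal{C}$. For any $X\in \mathcal{C}_{\alpha,\omega}$, the adjunction
\[
\map_{\mathcal{C}}(F_\alpha X, -) \simeq \map_{\mathcal{C}_\alpha}(X, G_\alpha(-))
\]
together with the filtered-colimit preservation of $G_\alpha$ shows $F_\alpha X \in \mathcal{C}_\omega$. These objects generate $\mathcal{C}$: if $Y\in\mathcal{C}$ is orthogonal to every $F_\alpha X$, then by adjunction $G_\alpha Y$ is orthogonal to every compact object of $\mathcal{C}_\alpha$, whence $G_\alpha Y \simeq 0$ by compact generation of $\mathcal{C}_\alpha$. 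The joint conservativity of the family $\{G_\alpha\}$, inherited from $\mathcal{C}$ being the limit in $\bigcatinf$, then gives $Y\simeq 0$. This proves (i) together with the explicit description of the compact generators.

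\textbf{The main obstacle} is the componentwise computation of filtered colimits in $\mathcal{C}$; this is the one step where the hypothesis that the diagram takes values in $\prrc$ rather than merely in $\prr$ is essential, and every other verification (the compactness of $F_\alpha X$, the universal property in $\prrc$, and the generation argument) is a formal consequence of it combined with the adjunction and conservativity inputs above.
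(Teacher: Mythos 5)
Your route — take the limit in $\prr$ (which agrees with the limit in $\bigcatinf$ by Proposition \ref{pro:lim-prl-prr}), observe that filtered colimits in the limit are computed componentwise because all transition functors preserve them, deduce that the projections $G_\alpha$ lie in $\prrc$ and that the left adjoints $F_\alpha$ carry compact objects to compact objects, and finally show that the objects $F_\alpha X$, $X\in\mathcal{C}_{\alpha,\omega}$, generate — is exactly the argument of HTT, Proposition 5.5.7.6, which is all the paper gives by way of proof. The componentwise computation of filtered colimits, the verification of the universal property in $\prrc$ (via joint conservativity of the $G_\alpha$), and the compactness of $F_\alpha X$ are fine as you state them.

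The genuine gap is the generation step. The proposition is about $\prrc$, i.e.\ arbitrary compactly generated presentable $\infty$-categories, which need not be stable or even pointed; there the phrases ``$Y$ is orthogonal to every $F_\alpha X$'', ``$G_\alpha Y\simeq 0$'', and the implicit principle ``a set of compact objects whose right orthogonal vanishes generates'' do not make sense, and the zero-detection criterion for compact generation is special to the stable setting (where your argument is correct, and would in fact suffice for the paper's applications to the stable categories $\rigmot_{\mathrm{gr}}(L)$). For the statement as given you should argue with colimit-closure instead: let $\mathcal{C}'\subseteq\mathcal{C}$ be the smallest full subcategory containing all $F_\alpha X$ with $X\in\mathcal{C}_{\alpha,\omega}$ and closed under small colimits; it is presentable, so the inclusion $i\colon\mathcal{C}'\hookrightarrow\mathcal{C}$ admits a right adjoint $R$. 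For $Y\in\mathcal{C}$ the counit $iRY\to Y$ induces equivalences on $\map_{\mathcal{C}}(F_\alpha X,-)$, hence by adjunction on $\map_{\mathcal{C}_\alpha}(X,G_\alpha(-))$ for every compact $X$; since $\mathcal{C}_\alpha\simeq\mathrm{Ind}(\mathcal{C}_{\alpha,\omega})$, mapping spaces out of compact objects \emph{detect equivalences} (restricted Yoneda is fully faithful), so $G_\alpha(iRY)\to G_\alpha(Y)$ is an equivalence for every $\alpha$, and joint conservativity of the $G_\alpha$ gives $Y\in\mathcal{C}'$. This ``compacts detect equivalences'' argument is the correct unstable replacement for your ``compacts detect zero'' step; with it in place the rest of your proposal goes through.
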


We recall an explicit formula to understand (small) limits of $\infty$-categories in a computable way.

\begin{pro}[Mapping Spaces of Limits]
  \label{pro:mapsp-of-lim}
  Let $\mathcal{F}\colon I \to \bigcatinf$ be a diagram of $\infty$-categories indexed by a simplicial set $I$. For each vertex $i$ of $I$, we let $\mathcal{C}_i$ denote the image of $i$ under $\mathcal{F}$ and put $\mathcal{C}= \lim_{\mathcal{F}} \mathcal{C}_i$, whose canonical functors are denoted by $p_i\colon \mathcal{C} \to \mathcal{C}_i$. Given any objects $X$, $Y$ in $\mathcal{C}$ with $X_i=p_i(X)$, $Y_i=p_i(Y)$, then we have a homotopy equivalence of spaces
  \[
\map_{\mathcal{C}}(X,Y) \simeq \lim \map_{\mathcal{C}_i}(X_i,Y_i).
\]
\end{pro}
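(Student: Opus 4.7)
The strategy is to realize the mapping space as a fiber of constructions on $\infty$-categories that visibly commute with small limits.

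First, I would recall the standard presentation: for any $\infty$-category $\mathcal{D}$ and objects $A, B$ in $\mathcal{D}$, the mapping space $\map_{\mathcal{D}}(A, B)$ is the fiber, over $(A, B)$, of the endpoint evaluation
\[
  (\mathrm{ev}_0, \mathrm{ev}_1)\colon \fun(\Delta^1, \mathcal{D})^{\simeq} \longrightarrow (\mathcal{D}\times \mathcal{D})^{\simeq},
\]
where $(-)^{\simeq}\colon \bigcatinf \to \mathcal{S}$ denotes the maximal $\infty$-subgroupoid functor. This presentation reduces the computation of $\map$ to a purely categorical construction plus a fiber.

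The key compatibilities are the following. The endofunctor $\fun(\Delta^1,-)$ of $\bigcatinf$ preserves small limits, since it is right adjoint to $-\times \Delta^1$ by Cartesian closedness. The functor $(-)^{\simeq}\colon \bigcatinf \to \mathcal{S}$ also preserves small limits, since it is right adjoint to the inclusion $\mathcal{S}\hookrightarrow \bigcatinf$. Applying both facts to $\mathcal{C}=\lim_i \mathcal{C}_i$, we find that $\fun(\Delta^1, \mathcal{C})^{\simeq}$ and $(\mathcal{C}\times \mathcal{C})^{\simeq}$ are identified with the limits of the corresponding objects for each $\mathcal{C}_i$, and the endpoint evaluation for $\mathcal{C}$ is the limit of the endpoint evaluations for the $\mathcal{C}_i$. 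The basepoint $(X,Y)\in (\mathcal{C}\times \mathcal{C})^{\simeq}$ corresponds to the coherent family $\{(X_i, Y_i)\}_i$ under this identification, by the hypothesis $X_i = p_i(X)$ and $Y_i = p_i(Y)$.

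Finally, since pullbacks (and hence fibers) commute with small limits in $\mathcal{S}$, taking the fiber of the limiting evaluation map over the basepoint $(X,Y)$ yields
\[
\map_{\mathcal{C}}(X,Y) \simeq \lim_i \map_{\mathcal{C}_i}(X_i, Y_i).
\]
The only delicate point is the coherence of the basepoint identification across the limit, which follows automatically from naturality of the evaluation square together with the universal property of $\mathcal{C}$ as a limit in $\bigcatinf$.
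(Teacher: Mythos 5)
Your proposal is correct, and at its core it is the same argument as the paper's: present $\map(X,Y)$ as a fiber of the evaluation map out of the arrow category and commute this presentation past the limit. The difference is in the technical packaging. The paper works at the level of simplicial sets: it takes the strict pullback of $\fun(\Delta^1,\mathcal{C}_i) \to \fun(\partial\Delta^1,\mathcal{C}_i)$ over $(X_i,Y_i)$, checks this is a categorical pullback because the restriction map is an isofibration, passes to pullback squares in $\bigcatinf$, and then takes the limit of these squares, using that limits of functors are computed levelwise and limits commute with limits. You instead argue model-independently: you insert the core functor $(-)^{\simeq}$ so as to land in $\mathcal{S}$, and deduce limit-preservation of $\fun(\Delta^1,-)$ and $(-)^{\simeq}$ from their being right adjoints, then commute the fiber with the limit in $\mathcal{S}$. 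Both routes are valid; yours buys a cleaner, citation-driven argument (no explicit isofibration or nerve manipulations), at the cost of invoking the slightly less elementary fact that the homotopy fiber of $\fun(\Delta^1,\mathcal{D})^{\simeq} \to (\mathcal{D}\times\mathcal{D})^{\simeq}$ over $(A,B)$ is $\map_{\mathcal{D}}(A,B)$ (true, and checkable exactly because that restriction map is an isofibration, so its core map is a Kan fibration whose strict fiber is the usual mapping-space model). The one point you wave at --- the coherence of the basepoints $(X_i,Y_i)$, i.e.\ that the fibers assemble into a genuine $I$-diagram in $\mathcal{S}$ --- is precisely what the paper's ``limit of pullback squares'' formulation handles; in your setup it is handled the same way, by taking the limit of the diagram of cospans $\Delta^0 \to (\mathcal{C}_i\times\mathcal{C}_i)^{\simeq} \leftarrow \fun(\Delta^1,\mathcal{C}_i)^{\simeq}$, where the left legs come from the single point $(X,Y)$ of $\mathcal{C}\times\mathcal{C} \simeq \lim_i(\mathcal{C}_i\times\mathcal{C}_i)$; spelling that out would close the only gap in your write-up.
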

\begin{proof}
  For each $i \in I$, we have a pullback diagram
  \begin{figure}
[H]\centering
\begin{tikzcd}
{\map_{\mathcal{C}_i}(X_i,Y_i)} \arrow[r] \arrow[d] & {\mathrm{Fun}(\Delta^1, \mathcal{C}_i)} \arrow[d] \\
\Delta^0 \arrow[r, "{(X_i,Y_i)}"]                   & {\mathrm{Fun}(\partial\Delta^1, \mathcal{C}_i)}  
\end{tikzcd}
\end{figure}\noindent
in the (ordinary) category of simplicial sets $\sset$. The exponentiation of isofibrations (\cite[\href{https://kerodon.net/tag/01F3}{Corollary 01F3}]{kerodon}
) shows the right vertical map is an isofibration. This implies that this diagram is a categorical pullback square, i.e., $\map_{\mathcal{C}_i}(X_i,Y_i)$ is equivalent to the homotopy fiber product (see \cite[\href{https://kerodon.net/tag/033P}{Proposition 033P}]{kerodon}). After taking the homotopy coherent nerve, we get a pullback square in $\bigcatinf$, where the commutative diagram above represents the boundary of this pullback square. The limit of these squares yields a pullback square in $\bigcatinf$ whose boundary is given by
\begin{figure}[H]
  \centering
\begin{tikzcd}
{\lim\map_{\mathcal{C}_i}(X_i,Y_i)} \arrow[r] \arrow[d] & {\fun(\Delta^1,\mathcal{C})} \arrow[d] \\
\Delta^0 \arrow[r, "{(X,Y)}"]                           & {\fun(\partial\Delta^1,\mathcal{C})}  
\end{tikzcd}.
\end{figure}\noindent
This follows from the facts that limits of functors are computed levelwise and that limits commute with limits. This gives the desired homotopy equivalence of spaces.
\end{proof}

In the sequel, we will meet many symmetric monoidal $\infty$-categories (like $\infty$-categories of motives). We also collect some results about small limits/colimits of such categories. For this purpose, we consider the $\infty$-categories of commutative algebra objects in $\prl$ (resp. $\prlc$), denoted by $\calg(\prl)$ (resp. by $\calg(\prlc)$). More precisely, an object in $\calg(\prl)$ (resp. $\calg(\prlc)$) is a presentable (resp. compactly generated) symmetric monoidal $\infty$-category whose tensor product functor $-\otimes-$ commutes with small colimits separately in each variable; see \cite[Remark 4.8.1.24 \& Lemma 5.3.2.11]{HA}

\begin{pro}
  \label{pro:clim-calgpr}
  We let $\mathcal{C}$ be either $\prl$ or $\prlc$ with the forgetful functor $\theta\colon \calg(\mathcal{C}) \to \mathcal{C}$.
  \begin{enumerate}
  \item The forgetful functor $\theta\colon \calg(\mathcal{C}) \to \mathcal{C}$ is conservative.
    
  \item The $\infty$-category $\calg(\mathcal{C})$ admits sifted colimits. Moreover, the forgetful functor $\theta$ preserves and reflects these colimits.
    
  \item The $\infty$-category $\calg(\mathcal{C})$ admits small limits. Moreover, the forgetful functor $\theta$ preserves and reflects small limits.
\end{enumerate}
\end{pro}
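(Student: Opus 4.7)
The plan is to reduce all three parts to general structural results in \cite{HA} about commutative algebra objects in a symmetric monoidal $\infty$-category, applied to $\mathcal{D} = \prl$ and $\mathcal{D} = \prlc$ equipped with the Lurie tensor product (\cite[\S 4.8.1]{HA}).

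For part (1), conservativity is formal: a morphism in $\calg(\mathcal{D})$ is an equivalence if and only if its image under the forgetful functor $\theta$ is. This is \cite[Lemma 3.2.2.6]{HA} (or its immediate corollary) and holds for any symmetric monoidal $\infty$-category, so both cases $\mathcal{D} = \prl$ and $\mathcal{D} = \prlc$ are covered at once. For part (3), I would invoke \cite[Corollary 3.2.2.5]{HA}, which says that $\calg(\mathcal{D})$ admits any small limit that already exists in $\mathcal{D}$, with the forgetful functor preserving such limits. Since $\prl$ and $\prlc$ admit all small limits by Proposition~\ref{pro:lim-prl-prr} and Proposition~\ref{pro:lim-in-prrc}, this gives existence and preservation. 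Reflection then follows by combining preservation with the conservativity from~(1): if $\theta(f)$ is an equivalence onto a limit cone, so is $f$.

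For part (2), the relevant general result is \cite[Proposition 3.2.3.1]{HA}: if $\mathcal{D}^{\otimes}$ is a symmetric monoidal $\infty$-category whose tensor product preserves sifted colimits in each variable, then $\calg(\mathcal{D})$ admits sifted colimits and $\theta$ preserves them. The hypothesis holds for $\mathcal{D} = \prl$ because $\prl$ is closed symmetric monoidal, so tensoring with a fixed object is a left adjoint and hence preserves all small colimits. For $\mathcal{D} = \prlc$, the Lurie tensor product restricts to $\prlc$ (tensor products of compactly generated $\infty$-categories remain compactly generated, with external tensor products of compact generators being compact generators), and tensoring with a fixed compactly generated $\infty$-category is again a left adjoint in $\prlc$, hence still preserves all small colimits. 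Reflection of sifted colimits follows once more from the conservativity of part~(1).

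The main subtlety lies in the $\prlc$ case of part~(2): one needs the Lurie tensor product to descend to a symmetric monoidal structure on $\prlc$ for which tensoring with a fixed object commutes with sifted colimits computed in $\prlc$. This is precisely the content of the remarks already invoked by the paper in the very definition of $\calg(\prlc)$, namely \cite[Remark 4.8.1.24]{HA} together with \cite[Lemma 5.3.2.11]{HA}, so the cleanest route is to quote these results directly rather than reverify them from scratch.
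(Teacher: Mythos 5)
Your proposal matches the paper's proof in essentially every respect: conservativity via \cite[Lemma 3.2.2.6]{HA}, small limits via \cite[Corollary 3.2.2.5]{HA}, and sifted colimits via \cite[\S 3.2.3]{HA} (the paper cites Corollary~3.2.3.2 where you cite Proposition~3.2.3.1, but these are the same argument). The one small slip is the justification that $\prlc$ admits small limits: you cite Proposition~\ref{pro:lim-in-prrc}, but that proposition concerns limits in $\prrc$, which under the anti-equivalence $\prlc \simeq (\prrc)^{\opp}$ gives \emph{colimits} in $\prlc$, not limits. The paper instead obtains small limits in $\prlc$ from the equivalence of \cite[Proposition 5.5.7.8]{HTT} with small idempotent-complete $\infty$-categories admitting finite colimits; your conclusion is still correct, but it needs that reference (or a similar argument) rather than Proposition~\ref{pro:lim-in-prrc}.
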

\begin{proof}
  The conservativity follows from \cite[Lemma 3.2.2.6]{HA}. As we saw in Proposition \ref{pro:lim-prl-prr} and Proposition \ref{pro:lim-in-prrc}, $\prl$ admits small colimits and limits and $\prlc$ admits small colimits. In fact, $\prlc$ also admits small limits using the equivalence in \cite[Proposition 5.5.7.8]{HTT}. Therefore, part $(2)$ and part $(3)$ follow from \cite[Corollary 3.2.2.5, Corollary 3.2.3.2]{HA}.
\end{proof}

\begin{rmk}
  \label{rmk:lim-prst}
  We may consider the above results in the setting of presentable stable $\infty$-categories. For this, we need to guarantee the existence of small limits in $\calg(\prst)$ (resp. $\calg(\prstc)$): these limits reduce to limits in $\prst$ (resp. $\prstc$), while the natural inclusion functor $\prst \hookrightarrow \prl$ (resp. $\prstc \hookrightarrow \prlc$) preserves limits by \cite[Theorem 1.1.4.4]{HA}.
\end{rmk}

\subsection{Homotopy Fixed Points}
\label{subsec:htfixpt}

\begin{notation}
  Given a (multiplicative) group $G$, we let $BG$ denote the groupoid defined by $G$. To be precise, $BG$ has exactly one object $*$ with $\hom(*,*)=G$, and the composition law is given by the multiplication of $G$. The $\infty$-groupoid $\subss{B} G$ is obtained by taking the nerve of it.
\end{notation}

\begin{df}
\label{df:ht-fix-pt}
  Let $G$ be a group and $\mathcal{C}$ an $\infty$-category with an object $X$.
  \begin{enumerate}
  \item A \myemph{$G$-action}\index{group action on $\infty$-categories} on $X$ is a functor $B_{\bullet}G \to \mathcal{C}$ sending $*$ to $X$. We let $\mathcal{C}^{BG}:= \mathrm{Fun}(B_{\bullet} G, \mathcal{C})$ denote the \myemph{$\infty$-category of $G$-objects} in $\mathcal{C}$.
    \index{$G$-objects}
    \index[sym]{$\mathcal{C}^{BG}$ ($\infty$-category of $G$-objects in $\mathcal{C}$)}

  \item Assume $\mathcal{C}$ admits all limits indexed by $\subss{B} G$. The \myemph{homotopy fixed point} functor is given by
    \begin{align*}
      (-)^{\htcat G} \colon \mathcal{C}^{BG}&\to \mathcal{C}\\
      (\rho\colon \subss{B} G \to \mathcal{C})&\mapsto \lim \rho. 
    \end{align*}
    \index{homotopy fixed point}
    \index[sym]{$X^{\htcat G} (homotopy fixed point)$}
\end{enumerate}
\end{df}

\begin{rmk}
  \label{rmk:htfpt-practical-case}
  \begin{enumerate}
  \item In the following, we take $\mathcal{C}= \bigcatinf$, $\prl$ or $\prr$ in the Definition \ref{df:ht-fix-pt}. This, in particular, defines group actions on $\infty$-categories and the corresponding $\infty$-category of homotopy fixed points. In practice, we will consider the case where $X= \rigmot(K)$ for a complete non-archimedean field $K$, which serves as the primary example in this paper.
    
  \item Although $\subss{B} G$ is the nerve of a $1$-category, giving a $G$-action on an object $X \in \mathcal{C}$ is not simply given by a group homomorphism $G \to \aut_{\htcat \mathcal{C}}(X)$ which is only equivalent to a morphism $\mathrm{sk}_2(\subss{B} G) \to \mathcal{C}$. For us, we are interested in a special case $\mathcal{C}= \bigcatinf$; therefore, using Grothendieck's construction, a functor $\subss{B}G \to \bigcatinf$ is given by (up to isomorphism) taking the simplicial nerve of a functor from $BG$ to the simplicial category of $\infty$-categories, see \cite[\href{https://kerodon.net/tag/0387}{Corollary 0387}]{kerodon}. Therefore, for an $\infty$-category $\mathcal{C}$ with a $G$-action, we often write it as a pair $\tilde{\mathcal{C}}=(\mathcal{C}, \rho)$, where $\rho$ represents the $G$-action on $\mathcal{C}$.
\end{enumerate}
\end{rmk}

From now on, we focus on group actions on $\infty$-categories. We will use Proposition \ref{pro:mapsp-of-lim} to understand mapping spaces of their $\infty$-categories of homotopy fixed points. As a preparatory step, we first recall how to compute homotopy groups of homotopy fixed points of $G$-spectra.

\begin{pro}[Homotopy Fixed Points Spectral Sequence]
  \label{pro:sseq-htfixpt}
  Given a finite group $G$, let $E$ be a $G$-spectrum (i.e. a $G$-object in $\SP$). Then there is a convergent spectral sequence
  \[
E_{p,q}^2\simeq H^{-p}(G, \pi_q(E)) \Rightarrow \pi_{p+q}(E^{\htcat G}).
\]
\end{pro}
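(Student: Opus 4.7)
The plan is to identify $E^{\htcat G}$ with the totalization of an explicit cosimplicial spectrum, and then to apply the Bousfield--Kan spectral sequence.

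First, I would present $\subss{B}G$ via its standard simplicial structure, whose $n$-simplices are $G^n$ and whose face and degeneracy maps are induced by multiplication in $G$ and insertion of the unit. Denoting by $\rho\colon\subss{B}G\to\SP$ the functor encoding the $G$-action on $E$, the limit $E^{\htcat G}=\lim\rho$ is then identified, via the bar construction, with the totalization $\mathrm{Tot}(E^{\bullet})$ of a cosimplicial spectrum $E^{\bullet}$ with $E^n\simeq\prod_{G^n}E$, whose coface maps are induced by the $G$-action on $E$ and by the unit of $G$. This is the $\infty$-categorical incarnation of the familiar cosimplicial object computing group cohomology.

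Next, I would invoke the Bousfield--Kan spectral sequence associated to the totalization of a cosimplicial spectrum, which for $E^{\bullet}$ as above takes the form
\[
E_2^{s,t}=\pi^s\bigl(\pi_t(E^{\bullet})\bigr)\Longrightarrow\pi_{t-s}\bigl(\mathrm{Tot}(E^{\bullet})\bigr),
\]
where $\pi^s$ denotes the cohomology of the conormalized cochain complex of the cosimplicial abelian group $\pi_t(E^{\bullet})$. The key identification of the $E_2$-page is that $\pi_t(E^{\bullet})$ has $n$-th term $\prod_{G^n}\pi_t(E)$, and that its conormalized complex coincides with the inhomogeneous bar complex computing $H^s(G;\pi_t(E))$. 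Reindexing via $p=-s$ and $q=t$ then yields the stated $E^2$-page and abutment.

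The main obstacle will be verifying convergence. The Tot-tower provides a complete, exhaustive filtration on $\pi_{*}(E^{\htcat G})$, and the $E_2$-page is concentrated in the half-plane $s\ge 0$, giving a horizontal vanishing line at the left edge. By Boardman's criterion, conditional convergence of the Tot-tower together with the vanishing of the $\mathrm{RE}_{\infty}$-terms in each bidegree upgrades to strong convergence, the latter being ensured by the half-plane location of $E_2$. For finite $G$, an essentially equivalent argument proceeds through the skeletal filtration attached to any free resolution of the trivial $\mathbb{Z}[G]$-module $\mathbb{Z}$; the advantage of the cosimplicial setup is that it renders the identification of the $E_2$-page with group cohomology tautological.
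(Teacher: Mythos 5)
Your proposal is essentially the paper's own argument: the paper likewise identifies $E^{\htcat G}$ with the totalization of the cobar-type cosimplicial object (written there as $\map(\subss{E}G,X)^{\htcat G}$) and simply invokes the Bousfield--Kan spectral sequence of Bousfield--Kan and Goerss--Jardine, with the $E_2$-page identified via the bar complex computing $H^{*}(G,\pi_{*}E)$ exactly as you do. The only place you go beyond the paper is the convergence discussion, and there your claim that the half-plane position of $E_2$ by itself forces the vanishing of the $\mathrm{RE}_{\infty}$-terms is an overstatement (in general one only gets conditional convergence from the Tot-tower without a further vanishing-line or degeneration hypothesis), though this is harmless here since in the paper's only application the $\Q$-linearity makes the spectral sequence collapse at $E_2$, where strong convergence is immediate.
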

\begin{proof}
This is a special case of the Bousfield-Kan spectral sequence in \cite{BK72}; more precisely, we apply the Bousfield-Kan spectral sequence to the totalization of the cosimplicial space $\map(\subss{E} G, X)^{\htcat G}$; see also \cite[Proposition 7.7]{GJ09}.
\end{proof}

\begin{cor}
  \label{cor:mapsp-htfixpt}
  Let $\mathcal{C}$ be an $\infty$-category with an action by a finite group $G$. Given a pair of objects $\bar{M}$, $\bar{N}$ in $\mathcal{C}^{\htcat G}$, we let $M$, $N$ denote their underlying objects in $\mathcal{C}$. Then there is a homotopy equivalence of spaces
  \[
\map_{\mathcal{C}^{\htcat G}}(\bar{M}, \bar{N}) \simeq \map_{\mathcal{C}}(M,N)^{\htcat G}.
\]
 Moreover, if $\mathcal{C}$ is $\Q$-linear, then, for each integer $i \ge 0$, we have an isomorphism
\[
\pi_i \map_{\mathcal{C}^{\htcat G}}(\bar{M}, \bar{N}) \simeq \left( \pi_i \map_{\mathcal{C}}(M,N) \right)^{G}.
\]
\end{cor}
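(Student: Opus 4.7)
The plan is to derive both statements directly from the two computational tools already available. For part (1), I would apply Proposition \ref{pro:mapsp-of-lim} to the simplicial diagram $\rho\colon \subss{B}G \to \bigcatinf$ that encodes the $G$-action on $\mathcal{C}$ (see Remark \ref{rmk:htfpt-practical-case}(2)). By Definition \ref{df:ht-fix-pt}, the limit of this diagram is precisely $\mathcal{C}^{\htcat G}$. Since $\subss{B}G$ has a unique vertex $*$ with $\rho(*)=\mathcal{C}$, and the canonical projection $\mathcal{C}^{\htcat G} \to \mathcal{C}$ carries $\bar{M}, \bar{N}$ to $M, N$, Proposition \ref{pro:mapsp-of-lim} yields
\[
\map_{\mathcal{C}^{\htcat G}}(\bar{M}, \bar{N}) \simeq \lim_{\subss{B}G} \map_{\mathcal{C}}(M, N),
\]
where the right-hand diagram carries the $G$-action on $\map_{\mathcal{C}}(M, N)$ induced by the $G$-action on $\mathcal{C}$ together with the equivariant structures on $\bar{M}$ and $\bar{N}$. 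But this limit is by definition $\map_{\mathcal{C}}(M,N)^{\htcat G}$, which proves the first equivalence.

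For part (2), assume that $\mathcal{C}$ is ($\Q$-linear and) stable, so that mapping spaces are underlying spaces of mapping spectra: $\map_{\mathcal{C}}(M, N) \simeq \Omega^{\infty} E$ with $E = \mathrm{map}_{\mathcal{C}}(M, N)$, and each $\pi_q E$ is naturally a $\Q[G]$-module. Since $\Omega^{\infty}$ is a right adjoint and therefore preserves the small limit computing homotopy fixed points, part (1) gives
\[
\map_{\mathcal{C}^{\htcat G}}(\bar{M}, \bar{N}) \simeq \Omega^{\infty}\bigl(E^{\htcat G}\bigr).
\]
Now apply Proposition \ref{pro:sseq-htfixpt} to the $G$-spectrum $E$:
\[
E_{p,q}^{2} = H^{-p}\bigl(G, \pi_q E\bigr) \Longrightarrow \pi_{p+q}\bigl(E^{\htcat G}\bigr).
\]
Because $G$ is finite and $|G|$ is invertible in $\Q$, a standard averaging (Maschke) argument shows $H^{n}(G, \pi_q E) = 0$ for every $n > 0$, so only the line $p=0$ survives. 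The spectral sequence degenerates at $E_2$ and yields $\pi_i\bigl(E^{\htcat G}\bigr) \simeq (\pi_i E)^{G}$; passing to the underlying infinite loop space and using $\pi_i \Omega^{\infty} E = \pi_i E$ for $i \ge 0$ gives the claimed formula.

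The only delicate point is the passage from mapping spaces to mapping spectra needed to invoke Proposition \ref{pro:sseq-htfixpt}, which requires that $\mathcal{C}$ be stable. This is implicit in all intended applications to rigid analytic motives, where the ambient $\infty$-categories are stable and $\Q$-linear; without stability one would need a separate (and much more delicate) Bousfield--Kan-type argument for the homotopy fixed points of a pointed $G$-space, but the $\Q$-linear stable setting makes the computation immediate.
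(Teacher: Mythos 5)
Your proposal is correct and takes essentially the same route as the paper: Proposition \ref{pro:mapsp-of-lim} gives the first equivalence by feeding the diagram $\subss{B}G \to \bigcatinf$ into it, then the homotopy fixed point spectral sequence plus vanishing of positive-degree $G$-cohomology in characteristic zero gives the second. The only place you part ways is the passage from mapping spaces to mapping spectra: the paper applies Proposition \ref{pro:sseq-htfixpt}, which is stated for $G$-\emph{spectra}, directly to the mapping \emph{space} $\map_{\mathcal{C}}(M,N)$ without comment, while you correctly flag that this requires $\mathcal{C}$ to be stable (so that $\map \simeq \Omega^{\infty}\mathrm{map}$) and then observe that $\Omega^{\infty}$ is limit-preserving to transport the homotopy fixed point formula. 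This extra care fixes a genuine (if harmless in context, since all intended applications are stable) imprecision in the paper's proof, but it does not change the structure of the argument.
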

\begin{proof}
  The first assertion follows from Proposition \ref{pro:mapsp-of-lim}. Under the homotopy equivalence in the first assertion, we apply Proposition \ref{pro:sseq-htfixpt} to $E=\map_{\mathcal{C}}(M,N)$: there is a convergent spectral sequence 
  \[
E^2_{p,q}=H^{-p} \left( G, \pi_q \map_{\mathcal{C}}(M,N) \right) \Rightarrow \pi_{p+q} \map_{\mathcal{C}^{\htcat G}}(\bar{M}, \bar{N}).
\]
Now assume $\mathcal{C}$ is $\Q$-linear; then each term $E^2_{p,q}$ on the second page is a $\Q$-vector space. The vanishing of higher Galois cohomologies of $\Q$-vector spaces (\cite[\href{https://stacks.math.columbia.edu/tag/0DV3}{Lemma
  0DV3}]{stacks-project}) implies that the spectral sequence collapses on the second page with non-trivial terms $E^2_{0,i}= \left( \pi_i \map_{\mathcal{C}}(M,N) \right)^{G}$. Therefore, the convergence of the spectral sequence gives isomorphisms.
\end{proof}

It is easy to deduce, from Corollary \ref{cor:mapsp-htfixpt}, that taking homotopy fixed points preserves full faithfulness of functors. To clarify it, we introduce the following notion:

\begin{df}
  \label{df:G-subcat}
  \index{$G$-subcategory}
  Let $G$ be a group and let $ \mathrm{ev}\colon \bigcatinf^{BG} \to \bigcatinf$ denote the evaluation functor at the point, i.e., induced by applying the functor $\fun(-, \bigcatinf)$ to the obvious map $\Delta^0 \to \subss{B} G$. Given a $G$-$\infty$-category $\tilde{\mathcal{C}}=(\mathcal{C}, \rho)$ (see Remark~\ref{rmk:htfpt-practical-case} $(2)$), a \myemph{$G$-subcategory} of $\tilde{\mathcal{C}}$ is a $G$-$\infty$-category $\tilde{D}=(\mathcal{D}, \tau)$ together with a functor $\tilde{\iota}\colon \tilde{\mathcal{D}} \to \tilde{\mathcal{C}}$ such that $\iota:=\mathrm{ev}(\tilde{\iota})$ is a fully faithful functor.
\end{df}

\begin{rmk}
  \label{rmk:unique-subGcat}
  In Definition \ref{df:G-subcat}, the $G$-action on $\mathcal{D}$ is unique (up to homotopy) because the functor $\tilde{\iota}\colon \tilde{D} \to \tilde{C}$ is a Cartesian morphism with respect to the evaluation functor $\mathrm{ev}$. Indeed, for every $G$-$\infty$-category $\tilde{E}$, the fully faithful functor $\iota\colon \mathcal{D} \to \mathcal{C}$ induces a fully faithful functor (see \cite[Lemma 5.2]{GHN17} for a stronger form)
  \[
\map(\mathcal{E}, \mathcal{D}) \to \map(\mathcal{E}, \mathcal{C}).
\]
Therefore, the commutative diagram of Kan complexes
\begin{figure}
[H]\centering
\begin{tikzcd}
{\map(\tilde{\mathcal{E}}, \tilde{\mathcal{D}})} \arrow[r] \arrow[d] & {\map(\tilde{\mathcal{E}}, \tilde{\mathcal{C}})} \arrow[d] \\
{\map(\mathcal{E}, \mathcal{D})} \arrow[r, hook]                     & {\map(\mathcal{E}, \mathcal{C})}                          
\end{tikzcd}
\end{figure}\noindent
is a homotopy pullback square.
\end{rmk}

\begin{cor}
  \label{cor:grp-inv-preserve-ff}
Let $\mathcal{C}$ be an $\infty$-category equipped with an action by a
finite group $G$. Assume $\mathcal{D}$ is a $G$-subcategory of $\mathcal{C}$. Then the functor
\[
\mathcal{D}^{\htcat G} \to \mathcal{C}^{\htcat G}
\]
obtained by applying $(-)^{\htcat G}$ to the canonical inclusion, is fully faithful.
\end{cor}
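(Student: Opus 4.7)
The plan is to reduce the full faithfulness of $\bar{\iota}\colon \mathcal{D}^{\htcat G} \to \mathcal{C}^{\htcat G}$ to a mapping-space calculation using Corollary \ref{cor:mapsp-htfixpt}. More precisely, I would fix two objects $\bar{M}, \bar{N}$ in $\mathcal{D}^{\htcat G}$, write $M, N$ for their underlying objects in $\mathcal{D}$, and assemble a commutative square
\[
\begin{tikzcd}
\map_{\mathcal{D}^{\htcat G}}(\bar{M},\bar{N}) \ar[r,"\simeq"] \ar[d] & \map_{\mathcal{D}}(M,N)^{\htcat G} \ar[d] \\
\map_{\mathcal{C}^{\htcat G}}(\bar{\iota}\bar{M},\bar{\iota}\bar{N}) \ar[r,"\simeq"] & \map_{\mathcal{C}}(\iota M,\iota N)^{\htcat G}
\end{tikzcd}
\]
in which the horizontal equivalences are provided by Corollary \ref{cor:mapsp-htfixpt} and the right-hand vertical map is obtained by applying $(-)^{\htcat G}$ to the map of $G$-spaces $\map_{\mathcal{D}}(M,N) \to \map_{\mathcal{C}}(\iota M, \iota N)$ induced by $\iota$.

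Once the diagram is in place, the conclusion is immediate: since $\iota$ is fully faithful as a plain functor of $\infty$-categories, the map $\map_{\mathcal{D}}(M,N) \to \map_{\mathcal{C}}(\iota M, \iota N)$ is a homotopy equivalence of spaces; taking homotopy fixed points is a limit construction and therefore preserves equivalences, so the right vertical map is also an equivalence. The two-out-of-three property then forces the left vertical map to be an equivalence, proving that $\bar{\iota}$ induces an equivalence on all mapping spaces. Note that finiteness of $G$ is actually not needed for this half of the argument, but it is harmless to keep it as a hypothesis since it is the setting in which Corollary \ref{cor:mapsp-htfixpt} was established.

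The only non-routine step is the commutativity of the square together with the identification of the right vertical map with $\iota^{\htcat G}$ at the level of mapping spaces; this is the point where one must be careful about the $G$-equivariance of the induced map on mapping spaces. I would derive it from the fact that $\tilde{\iota}$ is, by definition, a morphism in $\bigcatinf^{\subss{B} G}$, together with the naturality of the explicit description of mapping spaces in a limit of $\infty$-categories proved in Proposition \ref{pro:mapsp-of-lim}: applying that proposition functorially to the $G$-equivariant functor $\tilde{\iota}$ produces exactly the claimed commutative square. Remark \ref{rmk:unique-subGcat} can be invoked to make the comparison of $G$-actions on $\mathcal{D}$ and $\mathcal{C}$ unambiguous. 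Beyond this bookkeeping, no further ingredient is required.
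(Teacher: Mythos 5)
Your proposal is correct and follows essentially the same route as the paper: the paper also deduces the claim from Corollary \ref{cor:mapsp-htfixpt} applied to both $\mathcal{D}^{\htcat G}$ and $\mathcal{C}^{\htcat G}$, combined with the full faithfulness of the underlying inclusion $\iota$ and the fact that $(-)^{\htcat G}$, being a limit, preserves equivalences of $G$-spaces. Your extra attention to the naturality of the identifications (via Proposition \ref{pro:mapsp-of-lim} and Remark \ref{rmk:unique-subGcat}), ensuring the composite equivalence really is the map induced by $\mathcal{D}^{\htcat G} \to \mathcal{C}^{\htcat G}$, makes explicit a point the paper's chain of equivalences leaves implicit, and your observation that finiteness of $G$ is not needed here is accurate.
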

\begin{proof}
 Consider the commutative diagram (up to homotopy)
 \begin{figure}
[H]\centering
\begin{tikzcd}
\mathcal{D}^{\htcat G} \arrow[r] \arrow[d] & \mathcal{C}^{\htcat G} \arrow[d] \\
\mathcal{D} \arrow[r, hook]                & \mathcal{C}                     
\end{tikzcd}
\end{figure}\noindent
where vertical morphisms are forgetful functors. Let $F\colon
\mathcal{D}^{\htcat G} \to \mathcal{C}^{\htcat G}$
denote the induced functor and let $\bar{M}$ and $\bar{N}$ be two objects in
$\mathcal{D}^{\htcat G}$ with underlying objects $M$, $N$ in
$\mathcal{D}$. The commutative diagram above shows the underlying objects of $F(\bar{M})$, $F(\bar{N})$ in
$\mathcal{C}$ are $\iota(M)$, $\iota(N)$, respectively; here $\iota \colon \mathcal{D} \hookrightarrow \mathcal{C}$ is the inclusion. We deduce from Corollary \ref{cor:mapsp-htfixpt}:
\[
\map_{\mathcal{D}^{\htcat G}}(\bar{M}, \bar{N}) \simeq
\map_{\mathcal{D}}(M,N)^{\htcat G} \simeq
\map_{\mathcal{C}}(\iota(M),\iota(N))^{\htcat G} \simeq \map_{\mathcal{C}^{\htcat
    G}}(F(\bar{M}), F(\bar{N})).
\]
\end{proof}

We end the subsection with a useful and well-known fact about $\subss{B} G$.

\begin{lem}
  \label{lem:BG-georeal}
  Given a group $G$, we have a homotopy equivalence in $\mathcal{S}$,
  \[
\subss{B} G \simeq \abs{\subss{G}},
\]
where $\subss{G}$ is the simplicial object in $\mathcal{S}$ given by
\[
\cnerve(\Delta^{\mathrm{op}}) \xrightarrow{\subss{B} G} \cnerve(\setc) \hookrightarrow \mathcal{S}.
\]
\end{lem}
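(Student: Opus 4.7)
The plan is to reduce the lemma to the standard identification of the underlying anima of a simplicial set with the colimit in $\mathcal{S}$ of its levelwise diagram: for any simplicial set $Y \colon \Delta^{\mathrm{op}} \to \setc$, the anima represented by $Y$ is equivalent to $\colim_{[n] \in \Delta^{\mathrm{op}}} Y_n$, where each $Y_n$ is regarded as a discrete anima via $\setc \hookrightarrow \mathcal{S}$. Applying this with $Y = \subss{B} G$ (the nerve of the one-object groupoid $BG$) then yields the desired equivalence $\subss{B} G \simeq \abs{\subss{G}}$.

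To establish this general identity, the strategy is to use the universal property of $\fun(\Delta^{\mathrm{op}}, \mathcal{S})$ as the free $\infty$-cocompletion of $\Delta$: any two colimit-preserving functors $\fun(\Delta^{\mathrm{op}}, \mathcal{S}) \to \mathcal{S}$ that agree on representables are naturally equivalent. On one side, take the colimit functor $\colim_{\Delta^{\mathrm{op}}}$; on the other, the left Kan extension of the constant point functor $\Delta \to \mathcal{S}$, $[n] \mapsto \mathrm{pt}$, along the Yoneda embedding. Both preserve colimits, and both send the representable $\Delta^n$ to $\mathrm{pt}$: for the colimit functor this uses that the category $\Delta/[n]$ has a terminal object (the identity on $[n]$) and hence contractible nerve, so $\colim_{[m]} \hom_{\Delta}([m],[n]) \simeq \mathrm{pt}$ in $\mathcal{S}$. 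Hence the two functors coincide.

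Restricting the left Kan extension to $\sset \hookrightarrow \fun(\Delta^{\mathrm{op}}, \mathcal{S})$ sends $Y$ to $\colim_{\Delta/Y} \mathrm{pt} \simeq \cnerve(\Delta/Y)$ (as an anima). To identify this with the underlying anima of $Y$, one invokes the classical ``last vertex'' weak equivalence $\cnerve(\Delta/Y) \xrightarrow{\sim} Y$ in $\sset$, which descends to an equivalence in $\mathcal{S}$. This last equivalence, a routine consequence of Quillen's theorem~A applied to the last-vertex functor (see, e.g., \cite[Chapter IV]{GJ09}), is the main technical input; the remainder of the argument is formal manipulation via the universal property of presheaves. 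No deeper obstacle is anticipated beyond bookkeeping of variance conventions.
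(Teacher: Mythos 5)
Your argument is correct, but it takes a genuinely different route from the paper, whose proof is essentially a citation: the paper observes $\subss{B}G \simeq \subss{\mathrm{Sing}}(\,\abs{\subss{B}G})$ (using that $\subss{B}G$ is a Kan complex) and then quotes Kerodon, Variant 04QS, which identifies the anima of a simplicial set with the colimit in $\mathcal{S}$ of its levelwise-discrete simplicial object. You instead prove that general identification yourself: you compare the colimit functor on $\fun(\Delta^{\mathrm{op}},\mathcal{S})$ with the left Kan extension of the constant-point functor along the Yoneda embedding, via the universal property of the free cocompletion, and then identify the latter on a simplicial set $Y$ with the weak homotopy type of $\cnerve(\Delta_{/Y})$, concluding by the classical last-vertex equivalence $\cnerve(\Delta_{/Y}) \to Y$. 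This buys a self-contained proof valid for arbitrary simplicial sets, at the cost of two standard but nontrivial inputs that your write-up leaves partly implicit: (i) the identification of the colimit of a discrete-valued presheaf with the classifying anima of its category of elements (equivalently $\colim_{\mathcal{C}}\mathrm{pt}\simeq$ the weak homotopy type of $\cnerve(\mathcal{C})$ after unstraightening); for the representables a quicker argument is available, since $\map_{\mathcal{S}}(\colim y([n]),X)\simeq \map(y([n]),\delta X)\simeq X$ by Yoneda, where $\delta$ is the constant-presheaf functor, so $\colim y([n])\simeq \mathrm{pt}$ directly; and (ii) the last-vertex weak equivalence, which for a general simplicial set $Y$ is not literally an instance of Quillen's Theorem A (the target need not be the nerve of a category), though the statement is classical and, in the case $Y=\subss{B}G$ actually needed here, Theorem A does apply since $\subss{B}G$ is a nerve. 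Also note that ``agreeing on representables'' upgrades to a natural equivalence of the two colimit-preserving functors here only because both restrictions to $\Delta$ take terminal values, so both are canonically the constant functor at the point. None of these points is a gap in substance; they are matters of citation and bookkeeping.
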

\begin{proof}
It follows from the homotopy equivalence $\subss{B} G \simeq \subss{\mathrm{Sing}}(\,\abs{\subss{B} G})$ and the fact that the latter is just computed by the given geometric realization; for instance, see \cite[\href{https://kerodon.net/tag/04QS}{Variant 04QS}]{kerodon}.
\end{proof}


\section{\texorpdfstring{Weight Structures on Stable $\infty$-Categories}{Weight Structures on Stable Infinite-Categories}}
For the reader's convenience, we review the basic theory of weight structures on stable $\infty$-categories, after which we will recall several constructions of it. The main reference for this section is \cite{Bon10a,BS19,Sos19,Sos22}, to which we refer the reader for further details.

\begin{notation}
  Let $\mathcal{C}$ be a stable $\infty$-category and $\mathcal{D}$ a full subcategory. We define a full subcategory $\prescript{\perp}{}{\mathcal{D}}$ of $\mathcal{C}$ that is spanned by those objects $X \in \mathcal{C}$ such that
  \[
\hom_{\mathcal{C}}(X, Y) \simeq 0 , \text{ for any $Y \in \mathcal{D}$}.
\]
We will call $\prescript{\perp}{}{\mathcal{D}}$ the \myemph{left orthogonal complement} of $\mathcal{D}$ in $\mathcal{C}$.

Similarly, we can define the \myemph{right orthogonal complement} $\mathcal{D}^{\perp}$ of $\mathcal{D}$ in $\mathcal{C}$ as the full subcategory of $\mathcal{C}$ spanned by those objects $X \in \mathcal{C}$ such that
\[
\hom_{\mathcal{C}}(Y, X) \simeq 0 , \text{ for any $Y \in \mathcal{D}$}.
\]
\end{notation}

\subsection{The Definition of Weight Structures}
\begin{df}[{\cite[Definition 1.1.1]{Bon10a}, \cite[Definition 3.1.1]{Sos22}}]
A \myemph{weight structure}\index{weight structure} on a stable
$\infty$-category $\mathcal{C}$ consists of a pair of retract-closed full subcategories $w=(\mathcal{C}_{w \le 0},\mathcal{C}_{w \ge 0})$ satisfying the following properties:
\begin{description}

\item[Semi-invariance] $ \mathcal{C}_{w\ge 0}[1] \subseteq \mathcal{C}_{w\ge
    0}$, $ \mathcal{C}_{w\le 0}[-1] \subseteq \mathcal{C}_{w \le
    0}$; and we write, for any integers $n \in \Z $,
  \[
\mathcal{C}_{w \ge n}:=  \mathcal{C}_{w \ge 0} [n], \qquad
\mathcal{C}_{\le n}:=  \mathcal{C}_{w \le 0}[n].
\]
  
\item[Weak Orthogonality] If $X \in \mathcal{C}_{w\le 0}$ and $Y\in \mathcal{C}_{w
    \ge 1}$, then $\hom_{\mathcal{C}}(X,Y)\simeq 0$.
  
\item[Weight Decomposition] For any object $X \in \mathcal{C}$, we
  have a cofiber sequence
  \[
X_{\le 0} \to X \to X_{\ge 1},
\]
where $X_{\le 0} \in \mathcal{C}_{w\le 0}$ and $X_{\ge 1} \in
\mathcal{C}_{w \ge 1}$. This is called a \myemph{weight decomposition}\index{weight decomposition}
of $X$.
\end{description}
In the case, we also say $\mathcal{C}$ is a \myemph{weighted stable
  $\infty$-category}.
\index{weighted stable $\infty$-category}
\end{df}

\begin{rmk}[Dual to $t$-Structures]
  \label{rmk:compare-to-t-structure}
  In \cite{Pau08}, weight structures were independently studied under the name ``co-t-structure''. In fact, the axioms of a weight structure are dual to those of a t-structure
  on a stable $\infty$-category (or a triangulated category) $\mathcal{C}$: apart from the semi-invariance condition, the role of the pair $(\mathcal{C}_{t \ge 0}, \mathcal{C}_{t\le -1})$ in the axioms of a $t$-structure corresponds to the pair $(\mathcal{C}_{w\le 0}, \mathcal{C}_{w \ge 1})$ for a weight structure.
\end{rmk}

\begin{df}
  Let $\mathcal{C}$ be a stable $\infty$-category equipped with a weight structure $w=(\mathcal{C}_{w\le 0},\mathcal{C}_{w\ge 0})$.
  \begin{enumerate}
  \item For every pair $(a,b)$ of integers with $a \le b$, we let $\mathcal{C}_{[a,b]}$ denote the full subcategory of $\mathcal{C}$ spanned by $\mathcal{C}_{w \ge a} \cap \mathcal{C}_{w \le b}$.
    \index[sym]{$(\mathcal{C}_{[a,b]})$}
    In particular, if $a=b=n$, we write it simply by $\mathcal{C}_{w=n}$.
    \index[sym]{$\mathcal{C}_{w=n}$}
    
  \item We let $\mathcal{C}^{\mathrm{b}}$ denote the full subcategory of $\mathcal{C}$ spanned by those objects $X $ satisfying $X \in \mathcal{C}_{[a,b]}$ for some integers $a \le b$. The weight structure $w$ on $\mathcal{C}$ is said to be
    \myemph{bounded}\index{weight structure!bounded}
    if $\mathcal{C}= \mathcal{C}^{\mathrm{b}}$.
    \index[sym]{$\mathcal{C}^{\mathrm{b}}$}
    
  \item We will write $\mathcal{C}_{w=0}$ by $\mathcal{C}^{\heartsuit_w}$ and call it the \myemph{heart} of the weight structure $w$.\index{weight structure!heart}
    \index[sym]{$\mathcal{C}^{\heartsuit_w}$}
\end{enumerate}
\end{df}

\begin{eg}
  \label{eg:wtstr-chain}
  Let $\mathcal{A}$ be an additive category. The stable $\infty$-category $\bkch(\mathcal{A})$ of bounded chain complexes has a canonical bounded weight structure:
  \begin{align*}
    \bkch(\mathcal{A})_{w \ge 0}&= \left\{ \,  M \in \bkch(\mathcal{A}) \middle|\, M \text{ is isomorphic to a complex whose all negative degrees vanish}  \,\right\}\\
    \bkch(\mathcal{A})_{w \le 0}&= \left\{ \,  M \in \bkch(\mathcal{A}) \middle|\, M \text{ is isomorphic to a complex whose all positive degrees vanish}  \,\right\}
  \end{align*}
  The heart of this weight structure is exactly $\cnerve(\mathcal{A})$.
\end{eg}

Like $t$-structures, we also keep track of how weights vary under functors.

\begin{df}[{\cite[Definition 1.2.1]{Bon14}}]
  \label{df:wtexfun}
Let $(\mathcal{C}, w_{\mathcal{C}})$ and $(\mathcal{D}, w_{\mathcal{D}})$ be weighted stable
$\infty$-categories and let $F\colon \mathcal{C} \to \mathcal{D}$ be
an exact functor of the underlying stable $\infty$-categories. We say $F$ is \myemph{left or right weight-exact} if $F(\mathcal{C}_{w\ge 0}) \subseteq \mathcal{D}_{w\ge 0}$ or $F(\mathcal{C}_{w\le 0}) \subseteq \mathcal{D}_{w\le 0}$, respectively. If $F$ is both left and right weight-exact, then we say $F$ is \myemph{weight-exact}.
\end{df}

\begin{rmk}
  \label{rmk:wtex-for-bd}
  In Definition \ref{df:wtexfun}, if $w_{\mathcal{C}}$ is a bounded weight structure on $\mathcal{C}$, then the weight-exactness of $F$ can be checked using $F(\mathcal{C}^{\heartsuit_w})$. To be precise, if $w_{\mathcal{C}}$ is bounded, then $F$ is weight-exact if and only if the essential image of $\mathcal{C}^{\heartsuit_w}$ under $F$ lies in $\mathcal{D}^{\heartsuit_{w}}$ by \cite[Remark 1.2.3 (10)]{Bon14}; see also Proposition \ref{pro:bdwt-from-heart}.
\end{rmk}


\begin{df}[{\cite[Definition 4.1]{Aokwt}}]
  \index{weight structure!compatible with monoidal structure}
Let $\opr{\mathcal{C}}$ be a stable symmetric monoidal $\infty$-category. We say a weight structure $w=(\mathcal{C}_{w\ge 0}, \mathcal{C}_{w\le 0})$ on the underlying $\infty$-category $\mathcal{C}$ is \myemph{compatible} with the symmetric monoidal structure if two full subcategories $\mathcal{C}_{w \ge 0}$ and $\mathcal{C}_{w \le 0}$ are stable under the tensor product functor and contain the tensor unit.
\end{df}

\begin{rmk}
  \label{rmk:monoidal-wt}
  If a weight structure on the underlying $\infty$-category of a stable symmetric monoidal $\infty$-category $\opr{\mathcal{C}}$ is compatible with the monoidal structure, then, by \cite[Proposition 2.2.1.1]{HA}, the symmetric monoidal structure can be restricted to full subcategories $\mathcal{C}_{w\ge 0}$ and $\mathcal{C}_{w\le 0}$, $\mathcal{C}^{\heartsuit_w}$. We will use the obvious notations $\opr{\mathcal{C}}_{w\ge 0}$, $\opr{\mathcal{C}}_{w \le 0}$, $\opr{(\mathcal{C}^{\heartsuit_w})}$ to denote the monoidal restrictions.
\end{rmk}


\subsection{Bounded Weight Structures and Weight Complex Functors}
\label{sec:bound-weight-struct}

We focus on bounded weight structures in the subsection. We will recall how these weight structures are recovered by their hearts.

\begin{df}
Let $\mathcal{C}$ be a stable $\infty$-category. A full subcategory $\mathcal{N}$ of $\mathcal{C}$ is called \myemph{negative}\index{negative full subcategory} if the mapping space $\map_{\mathcal{C}}(X,Y)$ is connected for all $X, Y \in \mathcal{N}$.
\end{df}

As shown in \cite[Corollary 1.5.7]{Bon10a}, the heart of a bounded weight structure is additive, idempotent complete and negative. In fact, these conditions completely determine the bounded weight structures:

\begin{pro}[{\cite[Theorem 4.3.2 (II)]{Bon10a}}]
  \label{pro:bdwt-from-heart}
  Let $\mathcal{C}$ be a stable $\infty$-category. If $\mathcal{N}$ is a full subcategory of $\mathcal{C}$ satisfying
  \begin{itemize}
  \item the full subcategory $\mathcal{N}$ is negative;
  \item the full subcategory $\mathcal{N}$ generated $\mathcal{C}$ under finite colimits, negative shifts and retracts;
  \end{itemize}
then there is a unique bounded weight structure on $\mathcal{C}$ whose heart
contains $\mathcal{N}$. Moreover, the heart is the idempotent completion of $\mathcal{N}$.

\end{pro}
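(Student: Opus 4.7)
The plan is to imitate the classical triangulated-categorical construction of Bondarko, with care to phrase everything in terms of cofiber sequences and mapping spaces so that the argument makes sense in a stable $\infty$-category. First I would define the two candidate full subcategories: let $\mathcal{C}_{w\ge 0}$ be the smallest retract-closed full subcategory of $\mathcal{C}$ containing $\mathcal{N}[n]$ for all $n\ge 0$ and closed under extensions (i.e.\ under taking the middle term of a cofiber sequence whose other two terms already lie in the subcategory), and dually let $\mathcal{C}_{w\le 0}$ be the smallest retract- and extension-closed full subcategory containing $\mathcal{N}[n]$ for all $n\le 0$. Semi-invariance is then built into the definition.

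Next I would establish weak orthogonality. Given $X\in\mathcal{C}_{w\le 0}$ and $Y\in\mathcal{C}_{w\ge 1}$, a double devissage on the extension/retract-closure description reduces the vanishing of $\hom_{\mathcal{C}}(X,Y)$ to the case $X=M[i]$, $Y=M'[j]$ with $M,M'\in\mathcal{N}$, $i\le 0$ and $j\ge 1$; then $\hom_{\mathcal{C}}(X,Y)=\pi_{0}\map_{\mathcal{C}}(M,M'[j-i])$ with $j-i\ge 1$, which is killed by the negativity hypothesis on $\mathcal{N}$. Note that the retract step is harmless: a retract of a trivial group is trivial.

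The core of the proof is the weight decomposition axiom. I would argue by induction over the generation process of $\mathcal{C}$ from $\mathcal{N}$ under finite colimits, shifts, and retracts. The base case $X=M[n]$ is trivial: the zero decomposition on the appropriate side works. For the inductive step, given a cofiber sequence $Y\to Z\to X$ and weight decompositions $Y_{\le 0}\to Y\to Y_{\ge 1}$ and $Z_{\le 0}\to Z\to Z_{\ge 1}$, I want to lift the map $Y\to Z$ to a morphism of weight decompositions. The composite $Y_{\le 0}\to Y\to Z\to Z_{\ge 1}$ vanishes by the weak orthogonality already proven, so the space of factorizations $Y_{\le 0}\to Z_{\le 0}$ lifting $Y_{\le 0}\to Z$ is nonempty (and indeed contractible). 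Choosing such a lift and taking horizontal cofibers in the resulting $3\times 3$ diagram produces a cofiber sequence $X_{\le 0}\to X\to X_{\ge 1}$ with the required endpoints. The retract case is then obtained by splitting off idempotents from the decomposition of the ambient object, using retract-closure of $\mathcal{C}_{w\le 0}$ and $\mathcal{C}_{w\ge 1}$.

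Finally, for the identification of the heart: $\mathcal{C}^{\heartsuit_{w}}=\mathcal{C}_{w\le 0}\cap\mathcal{C}_{w\ge 0}$ is retract-closed and contains $\mathcal{N}$, hence contains its idempotent completion; conversely, a standard argument (peeling off weight pieces inductively, using boundedness) shows any heart object is obtained from $\mathcal{N}$ by iterated retracts, giving the reverse inclusion. Uniqueness: any two bounded weight structures whose hearts contain $\mathcal{N}$ must agree on $\mathcal{N}[n]$ for every $n$, and hence on the retract- and extension-closures that define $\mathcal{C}_{w\le 0}$ and $\mathcal{C}_{w\ge 0}$, and these determine the whole bounded weight structure. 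I expect the main obstacle to be the inductive weight decomposition step: the functoriality of the lift of a morphism between weight decompositions is only up to a contractible choice, so producing a coherent cofiber sequence of decompositions requires the $\infty$-categorical $3\times 3$-lemma rather than just the octahedral axiom, and one must verify that the induction on generation complexity is well-founded in the $\infty$-categorical sense.
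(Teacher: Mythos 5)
The paper itself offers no proof of this proposition: it is imported verbatim from Bondarko \cite{Bon10a} (Theorem 4.3.2(II)), with the $\infty$-categorical refinement due to Sosnilo mentioned in Remark \ref{rmk:bdwt-heart}, so your attempt can only be measured against the classical argument. Your architecture (define $\mathcal{C}_{w\le 0}$, $\mathcal{C}_{w\ge 0}$ as extension- and retract-closures of the shifted copies of $\mathcal{N}$, get orthogonality by d\'evissage, get weight decompositions by induction over the generation of $\mathcal{C}$) is the right one, and the orthogonality step is fine. But the key inductive step fails as written. If $X=\mathrm{cofib}(Y\to Z)$ and you lift $Y_{\le 0}\to Z_{\le 0}$ and take cofibers of the resulting map of weight decompositions, the left-hand cofiber $\mathrm{cofib}(Y_{\le 0}\to Z_{\le 0})$ is an extension of $Y_{\le 0}[1]$ by $Z_{\le 0}$ and therefore lies only in $\mathcal{C}_{w\le 1}$, not in $\mathcal{C}_{w\le 0}$; the sequence you produce is not a weight decomposition of $X$ at level $0$. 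The standard repair has to be built into the induction: prove the stronger statement that \emph{every shift} $X[n]$ admits a weight decomposition (this class is obviously closed under shifts), view the cone as the extension $Z\to X\to Y[1]$, and when factoring through $Z_{\le 0}$ use the level-$(-1)$ decomposition $Y_{\le -1}\to Y\to Y_{\ge 0}$ of $Y$; then the two cofibers land where required, since $Y_{\le -1}[1]\in\mathcal{C}_{w\le 0}$ and $Y_{\ge 0}[1]\in\mathcal{C}_{w\ge 1}$. (Also, your parenthetical that the space of lifts $Y_{\le 0}\to Z_{\le 0}$ is contractible is false --- orthogonality kills only $\pi_0$ of $\map(Y_{\le 0},Z_{\ge 1})$, not $\pi_1$ --- but mere existence of a lift is all the argument needs.)

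The second genuine gap is the retract step, which you dismiss in one sentence. ``Splitting off idempotents from the decomposition of the ambient object'' does not work: an idempotent $e$ of $X'$ has no reason to be compatible with any chosen weight decomposition of $X'$, and it is not formal that a retract of an object admitting a weight decomposition admits one. This is precisely the delicate point of Bondarko's theorem --- it is why the heart is only the \emph{idempotent completion} of $\mathcal{N}$, and why extending bounded weight structures to idempotent completions is a theorem in its own right (cf.\ \cite{Bon10a}, \S 5.2, and \cite{BS19}); the known proofs argue by induction on the length $b-a$ of the weight range of the ambient object, using negativity to correct the idempotent weight by weight, and this needs an actual argument rather than an appeal to retract-closure of $\mathcal{C}_{w\le 0}$ and $\mathcal{C}_{w\ge 1}$. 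Your uniqueness and heart-identification sketches are acceptable in outline once the decomposition axiom (including the retract case) is actually established.
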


\begin{cor}
  \label{cor:tensor-on-bdheart}
Let $\opr{\mathcal{C}} \in \calg(\prl)$. Assume its underlying $\infty$-category $\mathcal{C}$ is stable and equipped with a bounded weight structure $w$. Then the weight structure $w$ is compatible with the monoidal structure if and only if $\mathcal{C}^{\heartsuit_{w}}$ contains the tensor unit and is stable under tensor products.
\end{cor}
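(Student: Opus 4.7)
The plan is as follows. The forward direction will be immediate: if the weight structure is compatible with $\otimes$, then Remark~\ref{rmk:monoidal-wt} tells us both $\mathcal{C}_{w\ge 0}$ and $\mathcal{C}_{w\le 0}$ contain the unit and are stable under $\otimes$, hence so is their intersection $\mathcal{C}^{\heartsuit_w}$.

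For the reverse direction, the unit claim is automatic from the inclusion $\mathcal{C}^{\heartsuit_w}\subseteq \mathcal{C}_{w\ge 0}\cap \mathcal{C}_{w\le 0}$, so the real work is promoting closure under $\otimes$ from the heart to all of $\mathcal{C}_{w\ge 0}$ and $\mathcal{C}_{w\le 0}$. The strategy is to slice the bivariate $\otimes$ into univariate functors and invoke Remark~\ref{rmk:wtex-for-bd}. Concretely, since $\opr{\mathcal{C}}\in \calg(\prl)$, for each fixed $H$ the functor $-\otimes H\colon \mathcal{C}\to \mathcal{C}$ is exact, and when $H\in \mathcal{C}^{\heartsuit_w}$ it sends $\mathcal{C}^{\heartsuit_w}$ into itself by hypothesis; Remark~\ref{rmk:wtex-for-bd} then upgrades this to weight-exactness, so $X\otimes H\in \mathcal{C}_{w\ge 0}$ (resp.\ $\mathcal{C}_{w\le 0}$) whenever $X\in \mathcal{C}_{w\ge 0}$ (resp.\ $\mathcal{C}_{w\le 0}$).

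To pass from $Y\in \mathcal{C}^{\heartsuit_w}$ to an arbitrary $Y\in \mathcal{C}_{w\ge 0}$, I will fix $X\in \mathcal{C}_{w\ge 0}$ and induct on the smallest $n\ge 0$ with $Y\in \mathcal{C}_{[0,n]}$, using boundedness of $w$. The case $n=0$ is handled above. For the step, take a weight decomposition $Y_{\le 0}\to Y\to Y_{\ge 1}$; a standard argument from orthogonality and closure of $\mathcal{C}_{w\ge 0}$ under extensions places $Y_{\le 0}$ in $\mathcal{C}^{\heartsuit_w}$, while semi-invariance lets us write $Y_{\ge 1}\simeq Y''[1]$ with $Y''\in \mathcal{C}_{[0,n-1]}$. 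Tensoring the cofiber sequence with $X$ remains a cofiber sequence $X\otimes Y_{\le 0}\to X\otimes Y\to (X\otimes Y'')[1]$; the two outer terms belong to $\mathcal{C}_{w\ge 0}$ by the base case and the inductive hypothesis (plus semi-invariance), and closure of $\mathcal{C}_{w\ge 0}$ under extensions delivers $X\otimes Y\in \mathcal{C}_{w\ge 0}$. The $\mathcal{C}_{w\le 0}$ case is entirely dual, using the weight decomposition into $\mathcal{C}_{\le -1}$ and $\mathcal{C}^{\heartsuit_w}$ together with negative shifts.

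The only non-trivial background ingredients are that $\mathcal{C}_{w\ge 0}$ and $\mathcal{C}_{w\le 0}$ are closed under extensions, and that weight decompositions of objects in $\mathcal{C}_{[0,n]}$ can be arranged with pieces in $\mathcal{C}^{\heartsuit_w}$ and $\mathcal{C}_{[1,n]}$; both are direct consequences of orthogonality and semi-invariance and already underlie Proposition~\ref{pro:bdwt-from-heart}, so no further input is needed. The most delicate point is thus simply bookkeeping the correct inductive hypothesis (induction on weight width) and verifying that the exactness of $X\otimes -$ interacts correctly with shifts and cofibers—both of which are automatic in $\calg(\prl)$.
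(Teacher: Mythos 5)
Your proof is correct and takes essentially the same route as the paper: the forward direction is immediate, and the reverse direction reduces closure of $\mathcal{C}_{w\ge 0}$ and $\mathcal{C}_{w\le 0}$ under $\otimes$ to the heart, using exactness of the tensor product in each variable (from $\opr{\mathcal{C}}\in\calg(\prl)$) together with boundedness. Your explicit dévissage—weight-exactness of $-\otimes H$ via Remark~\ref{rmk:wtex-for-bd} plus induction on weight width—simply spells out the generation-from-the-heart argument that the paper delegates to Proposition~\ref{pro:bdwt-from-heart}.
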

\begin{proof}
The ``only if'' part is obvious (see Remark \ref{rmk:monoidal-wt}). Conversely, since $\opr{\mathcal{C}} \in \calg(\prl)$, the tensor products preserve small colimits in each variable; in particular, they are exact (see \cite[Proposition 1.1.4.1]{HA}). So we can conclude from Proposition \ref{pro:bdwt-from-heart}.
\end{proof}

\begin{rmk}
  \label{rmk:bdwt-heart}
A stronger result in a categorical formulation can be found in \cite[Proposition 3.3]{Sos19}: Sosnilo proved that taking weight heart is a fully faithful functor from the $\infty$-category of boundedly weighted stable $\infty$-categories to the $\infty$-category of additive $\infty$-categories. The essential image of this embedding is also given in \cite[Corollary 3.4]{Sos19}.
\end{rmk}

There is an important construction related to bounded weight structures:

\begin{cons}[Weight Complex Functor, {\cite[\S 3]{Bon10a}, \cite[Corollary 3.5]{Sos19}}]
  \label{cons:wt-cplx}
  Let $\mathcal{C}$ be a boundedly weighted stable $\infty$-category. Recall that, in Example \ref{eg:wtstr-chain}, we have seen that $\cnerve(\htcat \mathcal{C}^{\heartsuit_{w}})$ is the heart of the bounded weighted stable $\infty$-category $\bkch(\htcat \mathcal{C}^{\heartsuit_w})$. Remark \ref{rmk:bdwt-heart} tells us there is a weight-exact functor
    \[
\subss{W}\colon \mathcal{C} \to \bkch(\htcat \mathcal{C}^{\heartsuit_{w}}),
\]
whose heart is the unit functor $\mathcal{C}^{\heartsuit_w} \to \cnerve(\htcat \mathcal{C}^{\heartsuit_{w}})$. This functor is called the \myemph{weight complex functor} of $\mathcal{C}$. Moreover, if $\mathcal{C}$ admits a symmetric monoidal structure compatible with the weight structure, then the weight complex functor has the symmetric monoidal refinement (\cite[Theorem 4.3]{Aokwt}, \cite[Proposition 3.27]{BGV25}).
\end{cons}

\subsection{Constructions of Weight Structures}
\label{subsec:cons-wtstr}
There is a generalization of Proposition \ref{pro:bdwt-from-heart} allowing us to construct a weight structure from a negative full subcategory. However, these weight structures are not necessarily bounded:

\begin{pro}[{\cite[Theorem 2.2.1]{BS19}}]
  \label{pro:wtstr-from-neg}
  Let $\mathcal{C}$ be a stable $\infty$-category admitting small colimits. Assume $\mathcal{N}$ is a full subcategory of $\mathcal{C}$ that generates $\mathcal{C}$ under small colimits. Let $\mathcal{C}_{w \ge 0}$ and $\mathcal{C}_{w\le 0}$ be full subcategories generated by $(\mathcal{N}[i])_{i\ge 0}$ and $(\mathcal{N}[i])_{i\le 0}$, respectively, under small coproducts and extensions. If $\mathcal{N}[-1] \subseteq \prescript{\perp}{}{\mathcal{C}_{w \ge 0}}$ holds, then $w=(\mathcal{C}_{w \ge 0}, \mathcal{C}_{w\le 0})$ is the unique weight structure $w$ on $\mathcal{C}$ satisfying
\begin{itemize}
\item the heart of $w$ contains $\mathcal{N}$; more precisely, the heart of $w$ is the idempotent completion of the full subcategory generated by (possibly infinite) direct sums of $\mathcal{N}$;
\item the full subcategory $\mathcal{C}_{w \ge 0}$ is closed under small coproducts (or equivalently small colimits).
\end{itemize}
\end{pro}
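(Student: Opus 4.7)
The plan is to verify the three axioms of a weight structure for the pair $(\mathcal{C}_{w\ge 0},\mathcal{C}_{w\le 0})$, and then deduce uniqueness together with the description of the heart. Semi-invariance is tautological from the construction: the classes $\{\mathcal{N}[i]\}_{i\ge 0}$ and $\{\mathcal{N}[i]\}_{i\le 0}$ are stable under $[1]$ and $[-1]$ respectively, and shifts are autoequivalences commuting with small coproducts and extensions. For weak orthogonality, the hypothesis $\mathcal{N}[-1]\subseteq\prescript{\perp}{}{\mathcal{C}_{w\ge 0}}$ rewrites as $\hom(\mathcal{N},\mathcal{C}_{w\ge 1})=0$, and semi-invariance upgrades this to $\hom(\mathcal{N}[i],\mathcal{C}_{w\ge 1})=\hom(\mathcal{N},\mathcal{C}_{w\ge 1}[-i])=0$ for every $i\le 0$. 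Since $\prescript{\perp}{}{\mathcal{C}_{w\ge 1}}$ is closed under small coproducts and extensions (the mapping spectrum functor in the first variable sends coproducts to products and cofiber sequences to fiber sequences), it contains the entire $\mathcal{C}_{w\le 0}$.

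The principal obstacle is the weight decomposition axiom: producing for every $X\in\mathcal{C}$ a cofiber sequence $X_{\le 0}\to X\to X_{\ge 1}$ with the correct endpoints, given that weight truncations are non-functorial and no adjoint functor theorem is available. I plan a transfinite small-object argument. Starting from $X^{(0)}=0\to X$, at each successor stage I would form $F^{(\alpha)}=\mathrm{cofib}(X^{(\alpha)}\to X)$ and consider the coproduct $C^{(\alpha)}=\bigoplus_{N\to F^{(\alpha)}}N$ ranging over all $N\in\bigcup_{i\le 0}\mathcal{N}[i]$; composing with the connecting map $F^{(\alpha)}\to X^{(\alpha)}[1]$ gives $C^{(\alpha)}[-1]\to X^{(\alpha)}$, and I would set $X^{(\alpha+1)}:=\mathrm{cofib}(C^{(\alpha)}[-1]\to X^{(\alpha)})$. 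The rotated cofiber sequence $X^{(\alpha)}\to X^{(\alpha+1)}\to C^{(\alpha)}$ exhibits $X^{(\alpha+1)}$ as an extension of $C^{(\alpha)}\in\mathcal{C}_{w\le 0}$ by $X^{(\alpha)}\in\mathcal{C}_{w\le 0}$, so by closure of $\mathcal{C}_{w\le 0}$ under extensions it remains in $\mathcal{C}_{w\le 0}$; at limit stages I pass to filtered colimits, with some technical care to keep them inside $\mathcal{C}_{w\le 0}$. A cardinality bound coming from the smallness of $\mathcal{N}$ guarantees termination at some ordinal, producing $X_{\le 0}\to X$ whose cofiber $X_{\ge 1}$ satisfies $\hom(N,X_{\ge 1})=0$ for every $N\in\mathcal{N}[i]$ with $i\le 0$. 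The delicate part is then verifying that this orthogonality genuinely forces $X_{\ge 1}\in\mathcal{C}_{w\ge 1}$; I expect this to be the true hard step of the argument, requiring a simultaneous induction on a colimit presentation of $X$ by $\mathcal{N}$ together with the generation hypothesis, in order to realise $X_{\ge 1}$ as built from $\{\mathcal{N}[i]\}_{i\ge 1}$ by small coproducts and extensions.

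For uniqueness, any competing weight structure $w'$ whose heart contains $\mathcal{N}$ and for which $\mathcal{C}_{w'\ge 0}$ is closed under small coproducts must satisfy $\mathcal{N}[i]\subseteq\mathcal{C}_{w'\ge 0}$ for every $i\ge 0$; closure of $\mathcal{C}_{w'\ge 0}$ under extensions, which is a formal consequence of the weight decomposition axiom via weak orthogonality and retract closure, then forces $\mathcal{C}_{w\ge 0}\subseteq\mathcal{C}_{w'\ge 0}$, and the symmetric argument on $\mathcal{C}_{w'\le 0}$ produces equality. The description of the heart follows because any object of $\mathcal{C}^{\heartsuit_w}=\mathcal{C}_{w\ge 0}\cap\mathcal{C}_{w\le 0}$ is built from $\mathcal{N}$ by coproducts and extensions in both directions; the vanishing $\hom(\mathcal{N},\mathcal{N}[j])=0$ for $j\ge 1$ coming from weak orthogonality collapses every non-trivial extension of objects of $\mathcal{N}$ to a direct sum, so the heart reduces to the idempotent completion of small coproducts of objects of $\mathcal{N}$.
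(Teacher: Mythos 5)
Your verification of semi-invariance and weak orthogonality is correct, and the uniqueness sketch is essentially right (though note that the ``symmetric argument'' for $\mathcal{C}_{w\le 0}\subseteq\mathcal{C}_{w'\le 0}$ needs the observation that $\mathcal{C}_{w'\le 0}={}^{\perp}(\mathcal{C}_{w'\ge 1})$, which holds for any weight structure by retract-closure, and is therefore closed under coproducts). Be aware, however, that the paper itself offers no proof of this proposition: it is imported wholesale from \cite[Theorem 2.2.1]{BS19}, so you are attempting to reprove the cited black box. Measured against that, your treatment of the one axiom carrying all the content --- existence of weight decompositions --- is a plan rather than a proof, with three unresolved holes. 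First, the limit stages of your transfinite construction do not stay in $\mathcal{C}_{w\le 0}$ for free: that class is closed only under small coproducts and extensions, and a sequential or transfinite composite of your tower is the cofiber of a map $\bigoplus_{\alpha}X^{(\alpha)}\to\bigoplus_{\alpha}X^{(\alpha)}$, hence an extension of $\bigl(\bigoplus_{\alpha}X^{(\alpha)}\bigr)[1]$ by $\bigoplus_{\alpha}X^{(\alpha)}$, and the shifted term leaves $\mathcal{C}_{w\le 0}$. Showing that telescopes of such cellular towers (whose successive cofibers are coproducts of nonpositive shifts of $\mathcal{N}$) remain in the prescribed class is precisely where the real work of Bondarko--Sosnilo lies; ``some technical care'' does not discharge it. Second, your termination argument appeals to a cardinality bound ``coming from the smallness of $\mathcal{N}$'', but the proposition assumes neither that $\mathcal{N}$ is small nor that its objects are compact --- those hypotheses enter only later, in Corollary \ref{cor:wtstr-cg-from-negative}; without compactness, maps out of objects of $\mathcal{N}$ need not factor through a stage of the colimit, and the attaching process has no reason to stabilize.

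Third, and most seriously, even granting a map $X_{\le 0}\to X$ in $\mathcal{C}_{w\le 0}$ whose cofiber $X_{\ge 1}$ satisfies $\hom(N[i],X_{\ge 1})\simeq 0$ for all $N\in\mathcal{N}$, $i\le 0$, you still must show that $X_{\ge 1}$ lies in $\mathcal{C}_{w\ge 1}$ as defined by generation under coproducts and extensions. You explicitly defer this as ``the true hard step'', but it is the heart of the theorem: the agreement of the orthogonality description with the generation description of $\mathcal{C}_{w\ge 0}$ is a consequence of the result (compare Corollary \ref{cor:wtstr-cg-from-negative}(2)(c) in the paper), not something one may assume while proving it. A smaller point in the same vein: your description of the heart needs more than the splitting of a single extension of coproducts of objects of $\mathcal{N}$ --- an object of $\mathcal{C}_{w\ge 0}\cap\mathcal{C}_{w\le 0}$ is a priori an iterated, possibly transfinite, extension, and collapsing it to a retract of one coproduct requires an induction you have not supplied. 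As it stands the proposal establishes the formal axioms and uniqueness but not existence, so it cannot yet replace the citation to \cite[Theorem 2.2.1]{BS19}.
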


This proposition allows us to extend the weight structure on the compact objects to the entire category:

\begin{cor}
  \label{cor:wtstr-cg}
  Let $\mathcal{C}$ be a compactly generated stable $\infty$-category. Assume the full subcategory $\mathcal{C}_{\omega}$ of compact objects has a weight structure $w$. Then $w$ can be extended to a weight structure on $\mathcal{C}$ by taking $\mathcal{N}=\mathcal{C}_{\omega}^{\heartsuit_{w}} $ in Proposition \ref{pro:wtstr-from-neg}. Moreover, it is the unique weight structure on $\mathcal{C}$ extending $w$ on $\mathcal{C}_{\omega}$ such that $\mathcal{C}_{w \ge 0}$ is closed under small colimits.
\end{cor}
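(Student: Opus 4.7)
The plan is to apply Proposition \ref{pro:wtstr-from-neg} to $\mathcal{C}$ with the negative subcategory $\mathcal{N} := \mathcal{C}_\omega^{\heartsuit_w}$, and then to check that the weight structure so produced restricts to $w$ on $\mathcal{C}_\omega$. The uniqueness assertion will then follow from the uniqueness clause of Proposition \ref{pro:wtstr-from-neg} applied to this choice of $\mathcal{N}$, since any weight structure on $\mathcal{C}$ extending $w$ and with $\mathcal{C}_{w \ge 0}$ closed under small colimits automatically has its heart containing $\mathcal{N}$.

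First I would verify that $\mathcal{N}$ generates $\mathcal{C}$ under small colimits. Since $\mathcal{C}$ is compactly generated, $\mathcal{C}_\omega$ already generates $\mathcal{C}$ under small colimits, so it suffices to exhibit every $X \in \mathcal{C}_\omega$ inside the small-colimit closure of $\mathcal{N}$. I would iterate the weight decomposition of $w$ on $X$ in $\mathcal{C}_\omega$ to express $X$, via finite cofiber sequences, in terms of shifts of objects in $\mathcal{C}_\omega^{\heartsuit_w}$; since suspensions are pushouts and finite colimits are particular small colimits in the stable $\infty$-category $\mathcal{C}$, this places $X$ in the desired closure. Next I would verify the orthogonality $\mathcal{N}[-1] \subseteq {}^\perp \mathcal{C}_{w \ge 0}$, where $\mathcal{C}_{w \ge 0}$ denotes the class generated in $\mathcal{C}$ by $(\mathcal{N}[i])_{i \ge 0}$ under small coproducts and extensions. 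For $X \in \mathcal{N}[-1]$, compactness of $X$ forces $\hom_{\mathcal{C}}(X,-)$ to commute with small coproducts, while stability makes it carry cofiber sequences to fiber sequences of spectra. A d\'evissage along the generators of $\mathcal{C}_{w \ge 0}$ therefore reduces the vanishing to checking $\hom_{\mathcal{C}}(X,Y) \simeq 0$ for $Y \in \mathcal{N}[i]$ with $i \ge 0$, which is precisely the weak orthogonality axiom of $w$ on $\mathcal{C}_\omega$.

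Having invoked Proposition \ref{pro:wtstr-from-neg}, I would check that the resulting weight structure $\tilde{w}$ on $\mathcal{C}$ restricts to $w$ on $\mathcal{C}_\omega$: the forward inclusion $\mathcal{C}_{\omega, w \ge 0} \subseteq \mathcal{C}_{\tilde{w} \ge 0}$ (and analogously for ${\le 0}$) is visible from the explicit construction of the two classes, while the reverse inclusion reduces to observing that $\tilde{w}|_{\mathcal{C}_\omega}$ and $w$ are two weight structures on $\mathcal{C}_\omega$ whose hearts coincide up to retracts with $\mathcal{C}_\omega^{\heartsuit_w}$, hence agree. The step I expect to be most delicate is the generation check: iterated weight decompositions terminate transparently when $w$ is bounded on $\mathcal{C}_\omega$ (which is the case of interest in the applications later in the paper), whereas in the unbounded case one has to exploit compactness more carefully to witness each compact object as a colimit of the tower of successive weight decompositions.
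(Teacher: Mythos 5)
Your proposal follows the route the paper evidently intends (the corollary is stated without proof, as an immediate consequence of Proposition~\ref{pro:wtstr-from-neg}): take $\mathcal{N}=\mathcal{C}_{\omega}^{\heartsuit_{w}}$, verify generation and left-orthogonality, apply the proposition, check the restriction to $\mathcal{C}_{\omega}$, and read off uniqueness. The orthogonality check via compactness and d\'evissage is exactly right, and your reduction of the uniqueness clause is correct: any extension of $w$ with $\mathcal{C}_{w\ge 0}$ cocomplete has heart containing $\mathcal{N}$, so Proposition~\ref{pro:wtstr-from-neg}'s uniqueness applies.

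The one place where your argument is thinner than the statement's generality, and which you yourself flag, is the generation hypothesis when $w$ is not bounded. For bounded $w$, Proposition~\ref{pro:bdwt-from-heart} gives that $\mathcal{C}_{\omega}^{\heartsuit_w}$ generates $\mathcal{C}_{\omega}$ under finite (co)limits, shifts, and retracts, so $\mathcal{N}$ generates $\mathcal{C}$ as a localizing subcategory and everything goes through. For unbounded $w$, your proposed fix --- exhibit each compact $X$ as a colimit of its weight Postnikov tower --- is not automatic: weight truncations are not functorial and the tower need not converge to $X$ without extra hypotheses (this is precisely the content of the more refined discussion in \cite{BS19}). Likewise your reverse-inclusion step for the restriction (``two weight structures with the same heart agree'') is a theorem only in the bounded case; for unbounded $w$ one should argue directly via the characterizations $\mathcal{C}_{\tilde w\ge 0}=\bigcap_{i<0}\mathcal{N}[i]^{\perp}$ and $\mathcal{C}_{\omega,\tilde w\le 0}=\bigcap_{i>0}{}^{\perp}\mathcal{N}[i]$, as in Corollary~\ref{cor:wtstr-cg-from-negative}~(2)(c)--(d), together with a d\'evissage that again uses boundedness. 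In the paper this corollary is only ever invoked for the bounded weight structure produced by Proposition~\ref{pro:bdwt-from-heart} (see the proof of Corollary~\ref{cor:wtstr-cg-from-negative}~(2)(b)), so your proof is adequate for how the result is actually used; if one wants the statement at the stated level of generality, the generation and restriction steps need the additional input from \cite{BS19} rather than an appeal to iterated weight decompositions.
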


For our purposes, we state the following useful instance:

\begin{cor}
  \label{cor:wtstr-cg-from-negative}
  Let $\mathcal{C}$ be a compactly generated stable $\infty$-category. Assume there is a small negative full subcategory $\mathcal{N}$ consisting of compact objects, and it generates $\mathcal{C}$ under small colimits and negative shifts.
\begin{enumerate}
\item There is a unique weight structure $w$ on $\mathcal{C}$ satisfying
  \begin{itemize}
  \item the heart of $w$ contains $\mathcal{N}$;
    
  \item $\mathcal{C}_{w \ge 0}$ is closed under small colimits.
  \end{itemize}
  
\item The weight structure $w$ in $(1)$ satisfies
  \begin{enumerate}
  \item the heart of $w$ is the idempotent completion of the full subcategory generated by $\mathcal{N}$ under directed sums;
    
  \item the weight structure $w$ restricts to the weight structure on $\mathcal{C}_{\omega}$ whose heart contains $\mathcal{N}$;
    
  \item given $X \in \mathcal{C}$, then $X \in \mathcal{C}_{w \ge 0}$ if and only if $X \in \mathcal{N}[i]^{\perp}$ for all $i<0$;
    
  \item given $X \in \mathcal{C}_{\omega}$, then $X \in \mathcal{C}_{w\le 0}$ if and only if $X \in \prescript{\perp}{}{\mathcal{N}[i]}$ for all $i>0$.
  \end{enumerate}
  
\item Assume $\mathcal{C}$ admits a symmetric monoidal structure whose tensor unit is in $\mathcal{C}^{\heartsuit_w}$. If for any pair $(X,Y)$ of objects in $\mathcal{N}$, the tensor product $X \otimes Y $ is in the heart, then $w$ is compatible with the monoidal structure.
\end{enumerate}
\end{cor}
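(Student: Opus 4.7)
The plan is to reduce the corollary to the general machinery already collected in the subsection. The overall strategy is: first construct a bounded weight structure $w_0$ on $\mathcal{C}_\omega$ via Proposition \ref{pro:bdwt-from-heart}, then extend it to a weight structure $w$ on $\mathcal{C}$ via Corollary \ref{cor:wtstr-cg}, and finally read off the heart description and orthogonality characterizations from the explicit formulas in Proposition \ref{pro:wtstr-from-neg}. The assertions (1), (2a), (2b) are essentially repackaging, while (2c), (2d), (3) require a small additional verification.

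For (1) and (2b), the key preliminary is that $\mathcal{C}_\omega$ coincides with the closure of $\mathcal{N}$ under finite colimits, negative shifts and retracts. Indeed, $\bigcup_{i\le 0}\mathcal{N}[i]$ is a small set of compact generators of $\mathcal{C}$, so $\mathcal{C}_\omega$ is its thick closure in $\mathcal{C}$; since positive shifts are suspensions and therefore finite colimits, this thick closure agrees with the closure of $\mathcal{N}$ under finite colimits, negative shifts, and retracts. Proposition \ref{pro:bdwt-from-heart} then produces the unique bounded weight structure $w_0$ on $\mathcal{C}_\omega$ whose heart is the idempotent completion of $\mathcal{N}$, and Corollary \ref{cor:wtstr-cg} extends $w_0$ to a unique weight structure $w$ on $\mathcal{C}$ with $\mathcal{C}_{w\ge 0}$ closed under small colimits. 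Part (2a) is then immediate: Proposition \ref{pro:wtstr-from-neg} describes the heart of $w$ as the idempotent completion of the direct-sum closure of $\mathcal{C}_\omega^{\heartsuit_{w_0}}$, and $\mathcal{C}_\omega^{\heartsuit_{w_0}}$ is itself the idempotent completion of $\mathcal{N}$.

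For (2c) and (2d), I will use the standard identities $\mathcal{C}_{w\ge 0}=(\mathcal{C}_{w\le -1})^{\perp}$ and $\mathcal{C}_{w\le 0}=\prescript{\perp}{}{\mathcal{C}_{w\ge 1}}$, which follow from the weight decomposition together with the retract-closedness of $\mathcal{C}_{w\ge 1}$ and $\mathcal{C}_{w\le -1}$. By Proposition \ref{pro:wtstr-from-neg}, $\mathcal{C}_{w\le -1}$ (resp.\ $\mathcal{C}_{w\ge 1}$) is generated by $(\mathcal{N}[i])_{i\le -1}$ (resp.\ $(\mathcal{N}[i])_{i\ge 1}$) under small coproducts and extensions. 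Both orthogonality conditions propagate through extensions by the long exact sequence, and through coproducts in the first variable via $\hom(\bigoplus Y_j,X)\simeq\prod\hom(Y_j,X)$; this yields (2c) for every $X\in\mathcal{C}$. For (2d), one further needs orthogonality to survive coproducts in the \emph{second} variable, i.e.\ $\hom(X,\bigoplus Y_j)\simeq\bigoplus\hom(X,Y_j)$, which holds precisely when $X$ is compact and explains the restriction $X\in\mathcal{C}_\omega$.

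Finally, for (3), the tensor unit lies in $\mathcal{C}^{\heartsuit_w}$ by assumption, so it suffices to show that $\mathcal{C}_{w\ge 0}$ and $\mathcal{C}_{w\le 0}$ are closed under the tensor product. Using the bilinearity of $-\otimes-$ together with its compatibility with small colimits (which is tacit in the $\calg(\prl)$-context in which we work), this reduces to verifying that $X\otimes Y$ lies in the appropriate subcategory when $X\in\mathcal{N}[i]$ and $Y\in\mathcal{N}[j]$ with $i,j\ge 0$ or $i,j\le 0$. Factoring shifts out of the tensor, one has $X\otimes Y\simeq (X'\otimes Y')[i+j]$ for some $X',Y'\in\mathcal{N}$, and the hypothesis $\mathcal{N}\otimes\mathcal{N}\subseteq\mathcal{C}^{\heartsuit_w}$ places this in $\mathcal{C}_{w=i+j}$, as required. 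The main obstacle I expect is in Step~1, namely justifying the identification of $\mathcal{C}_\omega$ with the closure of $\mathcal{N}$ under finite colimits, negative shifts, and retracts, since this is where the compact-generation hypothesis must be used carefully; once this is in place, the remainder is essentially bookkeeping against Propositions \ref{pro:bdwt-from-heart} and \ref{pro:wtstr-from-neg}.
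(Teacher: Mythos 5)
Your proposal is correct, and it uses exactly the toolbox of this subsection, but the order of construction is inverted relative to the paper, which changes slightly where the work sits. The paper proves (1) by applying Proposition \ref{pro:wtstr-from-neg} \emph{directly} to $\mathcal{N}$ inside $\mathcal{C}$ (the only substantive check being that $\mathcal{N}[-1]^{\perp}$ contains $\mathcal{N}[i]$ for $i\ge 0$ and is closed under extensions and small coproducts, the latter by compactness of $\mathcal{N}$); this makes the uniqueness clause of (1) literally the uniqueness clause of that proposition, and the compact route (Proposition \ref{pro:bdwt-from-heart} plus Corollary \ref{cor:wtstr-cg}) is used only afterwards, to prove (2b) via the uniqueness already established. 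You instead build the bounded weight structure on $\mathcal{C}_{\omega}$ first -- your identification of $\mathcal{C}_{\omega}$ with the thick closure of $\mathcal{N}$ is correct and is in fact also implicitly needed in the paper's proof of (2b) -- and then extend by Corollary \ref{cor:wtstr-cg}; this buys you (2b) for free, but the uniqueness you quote is only uniqueness among extensions of the bounded structure on $\mathcal{C}_{\omega}$, which is a priori weaker than the uniqueness asserted in (1) (an arbitrary competing weight structure need not restrict to $\mathcal{C}_{\omega}$). This is a one-line repair: since the two classes of any weight structure are additive and retract-closed, a heart containing $\mathcal{N}$ contains its closure under finite sums and retracts, so the uniqueness clause of Proposition \ref{pro:wtstr-from-neg} (applied to $\mathcal{N}$, or to that closure) still pins the structure down -- but it should be said, and note that applying that proposition to $\mathcal{N}$ itself requires the negativity-plus-compactness check you never perform explicitly. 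Your arguments for (2c)--(2d) (propagate orthogonality through coproducts and extensions using $\mathcal{C}_{w\ge 0}=(\mathcal{C}_{w\le -1})^{\perp}$ and $\mathcal{C}_{w\le 0}=\prescript{\perp}{}{\mathcal{C}_{w\ge 1}}$, with compactness needed only for coproducts in the second variable) and for (3) (reduce to $\mathcal{N}\otimes\mathcal{N}$ by exactness and colimit-preservation of the tensor in each variable) coincide with the paper's, which merely writes out your ``reduction to generators'' as the explicit two-step $\mathcal{D}$, $\mathcal{D}'$ argument; both treatments tacitly assume the $\calg(\prl)$-type hypothesis on the tensor product that you flag.
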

\begin{proof}
  \begin{enumerate}
  \item By our assumption, $\mathcal{N}[-1]^{\perp}$ contains all $\mathcal{N}[i]$ for $i\ge 0$, and it is closed under extensions and small coproducts (because of compactness of objects in $\mathcal{N}$); thus $\mathcal{N}$ satisfies the assumption in Proposition \ref{pro:wtstr-from-neg}, we apply it to get a desired weight structure on $\mathcal{C}$.
    
  \item The part (a) is clear. For part (b), applying Proposition \ref{pro:bdwt-from-heart} to $\mathcal{N}$, we get the unique bounded weight structure on $\mathcal{C}_{\omega}$ whose heart contains $\mathcal{N}$. It can be extended to $\mathcal{C}$ by using Corollary \ref{cor:wtstr-cg}. This new weight structure on $\mathcal{C}$ agrees with $w$ due to the uniqueness in ($1$). It remains to prove (c) and (d). There are obvious inclusions
  \[
\mathcal{C}_{w \ge 0} \subseteq (\mathcal{N}[-i])^{\perp}, \quad \mathcal{C}_{w \le 0} \subseteq \prescript{\perp}{}{\mathcal{N}[i]},
\]
for all $i>0$. Now we prove the converse direction. For (c), it is clear: $\prescript{\perp}{}{\{X\}}$ is closed under small coproducts and extensions, and then the assumption on $X$ implies $\mathcal{C}_{w\le -1} \subseteq \prescript{\perp}{}{\{X\}}$; equivalently, we have $X \in \mathcal{C}_{w \ge 0}$ . The similar argument works to prove (d) when $X$ is compact.

\item We need to prove that $\mathcal{C}_{w\le 0}$ (resp. $\mathcal{C}_{w\ge 0}$) is closed under the tensor product. The proofs for them are similar. Let's prove for $\mathcal{C}_{w\le 0}$ (for $\mathcal{C}_{w\ge 0}$, one needs to notice that $\mathcal{C}_{w \ge 0}$ is stable under small coproducts). We define $\mathcal{D}$ as the full subcategory of $\mathcal{C}$ spanned by those $X \in \mathcal{C}$ such that $\mathcal{C}_{w\le 0} \otimes X \subseteq \mathcal{C}_{w\le 0}$. Then it suffices to prove $\mathcal{C}_{w \le 0} \subseteq \mathcal{D}$. To this end, we need to show $\mathcal{D}$ is closed under coproducts, extensions and contains $\mathcal{N}[i] $ for all $i\le 0$. As $\mathcal{C}_{w\le 0}$ is closed under coproduct and extensions, it remains to show $\mathcal{N}[i] \subseteq \mathcal{D}$ for all $i \le 0$. For this, we define a full subcategory $\mathcal{D}'$ of $\mathcal{C}$ in a way similar to $\mathcal{D}$, that is, $X \in \mathcal{D}'$ if $X \otimes \mathcal{N}[i] \subseteq \mathcal{C}_{w\le 0}$ for $i\le 0$. By our condition in the statement, we have inclusions $\mathcal{N}[j] \subseteq \mathcal{D}'$ for all $j\le 0$. It is easy to see that $\mathcal{D}'$ is closed under small coproducts and extensions as well. This implies $\mathcal{C}_{w\le 0} \subseteq \mathcal{D}'$, as desired.
\end{enumerate}
\end{proof}

\begin{cor}
  \label{cor:cg-wtex}
  Let $F \colon \mathcal{C}_{1} \to \mathcal{C}_2$ be a functor in $\prlc$. Assume, for $i=1,2$, there is a small full subcategory $\mathcal{N}_{i}$ satisfying the conditions in Corollary \ref{cor:wtstr-cg-from-negative}. Let $w_i$ be the weight structure obtained in Corollary \ref{cor:wtstr-cg-from-negative} for $i=1,2$. Then the following are equivalent:
  \begin{enumerate}
  \item $F$ is weight-exact;
  \item after restricting to compact objects, $F \colon \mathcal{C}_{1,\omega} \to \mathcal{C}_{2,\omega}$ is weight-exact;
    
  \item $F$ sends $\mathcal{N}_{1}$ into the heart of $w_{2}$.
\end{enumerate}
\end{cor}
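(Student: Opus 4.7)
The plan is to establish the loop of implications (1) $\Rightarrow$ (2) $\Rightarrow$ (3) $\Rightarrow$ (1). The first two are formal and follow from the structural fact that $w_{i}$ is compatible with the compact part, while (3) $\Rightarrow$ (1) carries the real content and rests on the generation description of the weight pieces provided by Proposition \ref{pro:wtstr-from-neg}.

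For (1) $\Rightarrow$ (2), I will use that a functor in $\prlc$ preserves compact objects, so $F$ restricts to $F_{\omega}\colon \mathcal{C}_{1,\omega} \to \mathcal{C}_{2,\omega}$; Corollary \ref{cor:wtstr-cg-from-negative}(2)(b) identifies the weight structure on $\mathcal{C}_{i,\omega}$ with the restriction of $w_{i}$, so weight-exactness transfers directly. For (2) $\Rightarrow$ (3), the restricted weight structure on $\mathcal{C}_{1,\omega}$ is bounded and contains $\mathcal{N}_{1}$ in its heart (again by Corollary \ref{cor:wtstr-cg-from-negative}(2)(b)). Remark \ref{rmk:wtex-for-bd} then allows one to check weight-exactness of $F_{\omega}$ on the heart alone, so $F(\mathcal{N}_{1})$ lies in the heart of $w_{2}$ restricted to $\mathcal{C}_{2,\omega}$, and \emph{a fortiori} in $\mathcal{C}_{2}^{\heartsuit_{w_{2}}}$.

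The main step is (3) $\Rightarrow$ (1). Consider the full subcategories
\[
\mathcal{D}_{\ge 0} := \{\, X \in \mathcal{C}_{1} \mid F(X) \in \mathcal{C}_{2, w \ge 0}\,\}, \qquad \mathcal{D}_{\le 0} := \{\, X \in \mathcal{C}_{1} \mid F(X) \in \mathcal{C}_{2, w \le 0}\,\}.
\]
Since $F$ is exact and preserves small colimits (both being part of $F \in \prlc$), $\mathcal{D}_{\ge 0}$ and $\mathcal{D}_{\le 0}$ are closed in $\mathcal{C}_{1}$ under small coproducts and extensions, inheriting these closure properties from their targets. Assumption (3) together with exactness of $F$ yields $F(\mathcal{N}_{1}[i]) \subseteq \mathcal{C}_{2}^{\heartsuit_{w_{2}}}[i]$ for every $i \in \Z$, so $\mathcal{N}_{1}[i] \subseteq \mathcal{D}_{\ge 0}$ for $i \ge 0$ and $\mathcal{N}_{1}[i] \subseteq \mathcal{D}_{\le 0}$ for $i \le 0$. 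By the generation description in Proposition \ref{pro:wtstr-from-neg}, $\mathcal{C}_{1, w \ge 0}$ and $\mathcal{C}_{1, w \le 0}$ are exactly the smallest such closed subcategories containing these shifts of $\mathcal{N}_{1}$; hence they are contained in $\mathcal{D}_{\ge 0}$ and $\mathcal{D}_{\le 0}$ respectively, which is weight-exactness of $F$.

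The argument presents no serious obstacle; the only point worth highlighting is the dual role of the $\prlc$ hypothesis on $F$, which simultaneously gives preservation of compact objects (making (1) $\Leftrightarrow$ (2) meaningful) and preservation of the colimit/extension closure operations used to bootstrap from $\mathcal{N}_{1}$ to all of $\mathcal{C}_{1, w \ge 0}$ and $\mathcal{C}_{1, w \le 0}$ in (3) $\Rightarrow$ (1).
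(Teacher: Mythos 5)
Your proposal is correct and follows essentially the same route as the paper: the paper treats (1) $\Rightarrow$ (2) and (2) $\Rightarrow$ (3) as immediate and proves (3) $\Rightarrow$ (1) exactly by the observation that $F$ preserves small coproducts and extensions, so the generation description of $\mathcal{C}_{1,w\ge 0}$ and $\mathcal{C}_{1,w\le 0}$ from shifts of $\mathcal{N}_1$ in Proposition \ref{pro:wtstr-from-neg} forces them into the preimages of $\mathcal{C}_{2,w\ge 0}$ and $\mathcal{C}_{2,w\le 0}$. Your write-up just makes the easy implications explicit (the appeal to Remark \ref{rmk:wtex-for-bd} in (2) $\Rightarrow$ (3) is not even needed, since weight-exactness trivially sends heart to heart), so there is nothing substantive to change.
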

\begin{proof}
We prove the non-obvious direction \imp{3}{1}. In fact, this follows from the fact that $F$ preserves small colimits and extensions.
\end{proof}


\section{Weight Structures on Rigid Analytic Motives}
In the section, we are interested in weight structures on motives. Our goal is to construct a weight structure on the whole category of rigid analytic motives over a complete non-archimedean field. For this, we first recall the $\infty$-category of motive in \S \ref{sec:rec-motive} and recall a relationship between analytic motives on the generic fibers and algebraic motives on the special fiber (see Proposition \ref{pro:rigmot-as-module}). In \S \ref{subsec:gal-descent}, we prove categorical formulae to relate general rigid analytic motives with those with good reduction by Galois descent. Finally, we construct the weight structure on rigid analytic motives over fields in \S \ref{subsec:wtstrproof}.

\subsection{Stable $\infty$-Categories of Motives}
\label{sec:rec-motive}

Let $K$ be a complete non-archimedean field with the residue field $k$ of characteristic $p>0$. We denote by $\mathcal{O}_K$ the ring of integers. Then we have the following $\infty$-categories of motives:
\begin{itemize}
\item the $\infty$-category of \myemph{rigid analytic motives} $\rigmot(K):= \mathbf{RigSH}\ets(\spa(K, \mathcal{O}_K), \Q)$ over $K$ defined in \cite[Definition 2.1.15]{AGV22};
  
\item the $\infty$-category of \myemph{formal motives} $\fagmot(\mathcal{O}_K) := \mathbf{FSH}\ets(\spf(\mathcal{O}_K), \Q)$ over $\mathcal{O}_K$ defined in \cite[Definition 3.1.3]{AGV22};
  
\item the $\infty$-category of \myemph{algebraic motives} $\agmot(k):= \fagmot(\spec(k), \Q)$ over $k$, where $\spec(k)$ is regarded as a formal scheme in the obvious way.
\end{itemize}
As shown in \cite{ayothesis1,ayothesis2,Ayo14a,ayorigmot,AGV22}, these are compactly generated stable symmetric monoidal monoidal $\infty$-categories.

A theorem of Ayoub, as stated in \cite[Corollaire 1.4.24 \& 1.4.29]{ayorigmot} or \cite[Theorem 3.1.10]{AGV22}, yields a symmetric monoidal functor, called the \myemph{Monsky-Washnitzer functor},
\begin{equation}
  \label{functor:WM}
\xi \colon \agmot(k) \simeq \fagmot(\mathcal{O}_K) \xrightarrow{(-)_{\eta}} \rigmot(K)
\end{equation}
in $\calg(\prlc)$, where $(-)_{\eta}$ is induced by the rigid generic fiber functor of formal schemes. We denote its right adjoint by
\[
\chi\colon \rigmot(K) \to \agmot(k).
\]
There is a remarkable way to relate motives on the rigid generic fiber with motives on the special fiber:

\begin{pro}
  \label{pro:rigmot-as-module}
   Consider the commutative algebra object $\chi {\mathbbm 1} \in \calg(\agmot(k))$, where ${\mathbbm 1}$ is the tensor unit in $\rigmot(K)$. Then the Monsky-Washnitzer functor $\xi$ factors through the free module functor $\mathrm{Free}\colon \agmot(k) \to \module{\chi {\mathbbm 1}}(\agmot(k))$; in other words, there is a functor
  \[
\tilde{\xi}\colon \module{\chi {\mathbbm 1}} (\agmot(k)) \to \rigmot(K)
\]
with an equivalence $\tilde{\xi} \circ \mathrm{Free} \simeq \xi$. Moreover, $\tilde{\xi}$ is fully faithful.
\end{pro}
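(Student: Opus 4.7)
The plan is to enhance the right adjoint $\chi$ to a functor $\tilde{\chi} \colon \rigmot(K) \to \module{\chi {\mathbbm 1}}(\agmot(k))$, produce $\tilde{\xi}$ as its left adjoint, and then establish full faithfulness by checking the unit of this adjunction on free modules via the projection formula and propagating the equivalence by a colimit-closure argument.

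First, I would exploit that $\xi$ is symmetric monoidal, so that its right adjoint $\chi$ is canonically lax symmetric monoidal. This in particular makes $\chi {\mathbbm 1}$ a commutative algebra in $\agmot(k)$ and, for every $N \in \rigmot(K)$, equips $\chi N$ with a canonical $\chi {\mathbbm 1}$-module structure via the lax structure map $\chi {\mathbbm 1} \otimes \chi N \to \chi({\mathbbm 1} \otimes N) \simeq \chi N$. Standard results about modules over lax monoidal functors (\cite[\S 7.3]{HA}) then produce a lift $\tilde{\chi} \colon \rigmot(K) \to \module{\chi {\mathbbm 1}}(\agmot(k))$ of $\chi$ along the conservative, limit- and colimit-preserving forgetful functor $U$. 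Since $\chi$ preserves limits and $U$ reflects them, $\tilde{\chi}$ preserves limits and is accessible, so the adjoint functor theorem (\cite[Corollary 5.5.2.9]{HTT}) yields a left adjoint $\tilde{\xi}$. Uniqueness of left adjoints applied to $\chi = U \circ \tilde{\chi}$ together with $\mathrm{Free} \dashv U$ then gives the required factorization $\tilde{\xi} \circ \mathrm{Free} \simeq \xi$.

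To show full faithfulness it suffices to verify that the unit $\eta_X \colon X \to \tilde{\chi}\tilde{\xi}(X)$ is an equivalence for every $X$. By conservativity of $U$, this reduces to showing $UX \to \chi\tilde{\xi}(X)$ is an equivalence in $\agmot(k)$. On a free module $X = \chi {\mathbbm 1} \otimes M$ we have $\tilde{\xi}(X) \simeq \xi(M)$, so the map in question unwinds to the projection-formula comparison
\[
  \chi {\mathbbm 1} \otimes M \xrightarrow{\sim} \chi\xi(M)
\]
associated to the symmetric monoidal adjunction $\xi \dashv \chi$. I would verify it on a family of dualizable compact generators of $\agmot(k)$ (e.g.\ motives of smooth proper $k$-varieties), where, using dualizability of $M$ and the monoidality of $\xi$, the comparison becomes the formal calculation $\hom(N, \chi\xi M) \simeq \hom(\xi N \otimes \xi M^{\vee}, {\mathbbm 1}) \simeq \hom(N \otimes M^{\vee}, \chi {\mathbbm 1}) \simeq \hom(N, \chi {\mathbbm 1} \otimes M)$; since both sides of the comparison are colimit-preserving functors of $M$, the equivalence extends to all $M \in \agmot(k)$.

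Finally, to pass from free modules to arbitrary modules I would argue that the class $\mathcal{E}$ of $X$ with $\eta_X$ an equivalence is closed under small colimits. The left adjoint $\tilde{\xi}$ preserves all colimits automatically, and $\tilde{\chi}$ does because $\chi$ does: $\chi$ commutes with filtered colimits because $\xi \in \prlc$ sends compacts to compacts, and with finite colimits because any right adjoint between stable $\infty$-categories is exact. Consequently $\tilde{\chi}\tilde{\xi}$ preserves small colimits, $\mathcal{E}$ is closed under them, and since every $\chi {\mathbbm 1}$-module is a colimit of free modules (for example via the monadic bar resolution) we conclude $\mathcal{E}$ is the whole category. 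The step I expect to be the most delicate is the construction of $\tilde{\chi}$ with the correct lax-monoidal enhancement in the first paragraph, which is needed to make sense of the unit map as a morphism of $\chi {\mathbbm 1}$-modules; once the module-level adjunction $\tilde{\xi} \dashv \tilde{\chi}$ is in place, the remaining steps are essentially formal.
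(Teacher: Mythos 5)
Your proposal is correct, but it takes a genuinely different route from the paper: the paper treats the factorization through $\mathrm{Free}$ as formal and then simply cites \cite[Theorem 3.3.3]{AGV22} for the full faithfulness of $\tilde{\xi}$, whereas you give a self-contained proof of (the field case of) that cited result — lift $\chi$ to $\tilde{\chi}$ via its lax symmetric monoidal structure, obtain $\tilde{\xi}$ as a left adjoint, reduce full faithfulness to the unit of $\tilde{\xi}\dashv\tilde{\chi}$ being an equivalence, verify it on free modules through the projection formula $\chi{\mathbbm 1}\otimes M\xrightarrow{\sim}\chi\xi(M)$ (proved on dualizable objects and extended by colimits), and propagate to all modules by the bar resolution. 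This argument is sound, but you should make explicit the two nontrivial inputs it rests on: (i) $\chi$ preserves \emph{all} small colimits, which needs $\xi$ to preserve compact objects — this is Ayoub's theorem (\cite[Corollaire 1.4.29]{ayorigmot}, \cite[Theorem 3.1.10]{AGV22}), already built into the paper's assertion that $\xi\in\calg(\prlc)$; and (ii) $\agmot(k)$ with rational étale coefficients is generated under colimits by dualizable compact objects (motives of smooth proper varieties and their Tate twists), which is \cite[Proposition 15.2.3]{cd19}, uses alterations, and (if $k$ is not perfect) a passage to the perfection — the paper invokes exactly this input later, in the proof of Proposition \ref{pro:cpt-gen-gal-inv-gd}. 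These two facts are precisely what make the base-field case formal; over a general formal base, where dualizable generators are unavailable, one needs the harder argument of \cite{AGV22}, which is why the paper defers to it. Two cosmetic remarks: the reference for lifting a lax monoidal functor to module categories is the module theory of \cite[\S 4.8, \S 7.1.2]{HA} rather than \S 7.3, and $\tilde{\xi}$ can be produced more directly as $N\mapsto {\mathbbm 1}\otimes_{\xi\chi{\mathbbm 1}}\xi(N)$ (base change along the counit $\xi\chi{\mathbbm 1}\to{\mathbbm 1}$), avoiding the adjoint functor theorem; neither point affects correctness.
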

\begin{proof}
The first assertion is clear. For the full faithfulness, see \cite[Theorem 3.3.3]{AGV22}.
\end{proof}

\begin{df}
  \label{df:rigmotgr}
  We let $\rigmot_{\mathrm{gr}}(K)$ denote the essential image of $\tilde{\xi}$ in Proposition \ref{pro:rigmot-as-module} and refer to it as the $\infty$-category of \myemph{rigid analytic motives with good reduction}.
\end{df}

With this notation, we have an equivalence of $\infty$-categories $\rigmot_{\mathrm{gr}}(K) \simeq \module{\chi {\mathbbm 1}}(\agmot(k))$.

\begin{rmk}
  \label{rmk:ssred-in-gr}
  The $\infty$-category $\rigmot_{\mathrm{gr}}(K)$ is also the full subcategory of $\rigmot(K)$ generated by $M(\mathcal{X}_{\eta})$ under colimits, where $\mathcal{X}$ runs through smooth formal schemes over $\mathcal{O}_K$. So this category contains more motives than those from rigid varieties with good reduction. In fact, it also contains motives associated to rigid analytic spaces with semi-stable reductions; see also \cite[Proposition 3.29]{BKV25} for more general examples.
\end{rmk}


\subsection{Galois Equivariant Rigid Analytic Motives}
\label{subsec:gal-descent}

\begin{cons}
  \label{cons:gal-inv}
  Let $L/K$ be a finite Galois extension of complete non-archimedean fields with the Galois group $\gal (L/K)$. It is clear that $\rigmot(L)$ admits a canonical action by $\gal (L/K)$ in the sense of Definition \ref{df:ht-fix-pt}. More precisely, we have a functor obtained by taking the homotopy coherent nerve\footnote{Indeed, taking the homotopy coherent nerve gives $\subss{B} \gal (L/K) \to \bigcatinf$. Then using \cite[Corollary 2.4.1.9]{HA} and the fact that $\rigmot(K) \in \calg(\prl)$, we get this functor.}
\begin{align}
  \label{eq:df-gal-action}
  \begin{split}
A_{L/K}\colon  B_{\bullet} \gal (L/K)&\to \calg(\prl)\\
  *&\mapsto \rigmot(L)\\
  (\sigma \in \gal (L/K))&\mapsto \left( \sigma^{*} \colon \rigmot(L) \to \rigmot(L) \right)
                             \end{split}
\end{align}
Taking homotopy fixed points, we get a new category $\rigmot(L)^{\htcat \gal (L/K)}$ in $\calg(\prl)$. We will refer to objects in this new category as \myemph{$\boldsymbol{\gal (L/K)}$-equivariant rigid analytic motives} over $L$.
\end{cons}

In the rest of this subsection, we fix a finite Galois extension $L/K$ of complete non-archimedean fields with the Galois group $\gal (L/K)$. Let $k_L/k$ be the corresponding extension of residue fields, and assume they are perfect. We will denote by $e\colon \spa(L) \to \spa(K)$ and $\bar{e}\colon \spec(k_L) \to \spec(k)$ the corresponding morphisms in the geometric world.

The next lemma shows that the Galois action restricts to an action on the subcategory of those with good reduction.

\begin{lem}[{\cite[Proposition 3.1.13]{AGV22}}]
  \label{lem:gen-fib-pullback-sharp}
 Let $e\colon \spa(L) \to \spa(K)$ and
  $\bar{e}\colon \spec k_L \to \spec k$ be the canonical
  maps. Consider the two diagrams below:
  \begin{figure}
[H]\centering
\begin{tikzcd}
\agmot(k_{L}) \arrow[r, "\xi_L"]                        & \rigmot(L)                   \\
\agmot(k) \arrow[r, "\xi_K"] \arrow[u, "\bar{e}^*"] & \rigmot(K) \arrow[u, "e^*"']
\end{tikzcd}
\quad in $\calg(\prl_{\omega})$,
\end{figure}\noindent
\quad
\begin{figure}
[H]\centering
\begin{tikzcd}
\agmot(k_{L}) \arrow[r, "\xi_L"] \arrow[d, "\bar{e}_{\sharp}"'] & \rigmot(L) \arrow[d, "e_{\sharp}"] \\
\agmot(k) \arrow[r, "\xi_K"]                                & \rigmot(K)                        
\end{tikzcd}
\quad in $\bigcatinf$.
\end{figure}\noindent
The first diagram is commutative (up to homotopy) and the second diagram is commutative if $L/K$ is unramified. In particular, we have restrictions
\begin{align*}
  e^{*}\colon \rigmot_{\mathrm{gr}}(K)&\to \rigmot_{\mathrm{gr}}(L)\\
  e_{\sharp}\colon \rigmot_{\mathrm{gr}}(L)&\to \rigmot_{\mathrm{gr}}(K),
\end{align*}
where $e_{\sharp}$ is well-defined if $L/K$ is unramified.
\end{lem}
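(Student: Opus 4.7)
The plan is to reduce everything to statements about formal motives, since the Monsky-Washnitzer functor $\xi$ is built as the composite
\[
\agmot(k) \simeq \fagmot(\mathcal{O}_K) \xrightarrow{(-)_{\eta}} \rigmot(K),
\]
where the first equivalence is Ayoub's rigidity theorem identifying algebraic motives over the residue field with formal motives over the ring of integers, and the second is the rigid generic fiber. Thus both squares translate into base-change / functoriality squares for the six-functor formalism of formal/rigid motives applied to the commutative diagram of (formal) schemes
\[
\begin{array}{ccc}
\spa(L) & \to & \spf(\mathcal{O}_L) \\
\downarrow e & & \downarrow \tilde{e} \\
\spa(K) & \to & \spf(\mathcal{O}_K)
\end{array}
\]
together with its special-fibre version obtained by crossing with $\spec(k_L)\to\spec(k)$.

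For the first square, I would argue as follows. The functor $e^{*}\colon \rigmot(K)\to\rigmot(L)$ is the rigid pullback along $e$, and $\bar{e}^{*}$ is its algebraic counterpart. Under the identification $\agmot \simeq \fagmot$, $\bar e^{*}$ corresponds to the formal pullback $\tilde{e}^{*}$. The required natural equivalence $\xi_L \circ \bar e^{*}\simeq e^{*}\circ \xi_K$ is then the base change that expresses compatibility of the generic fibre functor $(-)_{\eta}$ with $*$-pullback along $\tilde{e}$, which is a formal consequence of how $\rigmot$ is constructed from $\fagmot$ by inverting invertible maps on the generic fibre; this is exactly the content invoked by the citation to \cite[Proposition 3.1.13]{AGV22}. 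Because the resulting square lives in $\calg(\prlc)$, one really needs the equivalence as symmetric monoidal, compact-preserving left adjoints, which is automatic since all four functors are of this form.

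For the second square, the crucial point is that when $L/K$ is unramified, $\mathcal{O}_L/\mathcal{O}_K$ is finite étale, so $\tilde{e}\colon \spf(\mathcal{O}_L)\to\spf(\mathcal{O}_K)$ is étale. In that case the left adjoint $\tilde{e}_{\sharp}$ is defined on $\fagmot$ and likewise $\bar e_{\sharp}$ on $\agmot$; under the rigidity equivalence they correspond. The rigid map $e$ is then also étale, so $e_{\sharp}$ is defined on $\rigmot$, and one obtains $\xi_K\circ \bar e_{\sharp}\simeq e_{\sharp}\circ \xi_L$ either by taking left adjoints of the first square (using Proposition \ref{pro:lim-prl-prr} and adjoint functor considerations) or, more directly, by noting that $(-)_{\eta}$ commutes with $\sharp$-pushforward along étale morphisms. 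The hypothesis that $L/K$ is unramified is where the argument genuinely uses more than formal adjunction, since without it $\tilde{e}_{\sharp}$ on formal motives is not even defined; this is the main obstacle and is the reason the second square is restricted to the unramified case.

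Finally, the ``in particular'' follows immediately: by Definition \ref{df:rigmotgr} an object of $\rigmot_{\mathrm{gr}}(K)$ is a retract of something in the essential image of $\xi_K$, and the commutativity of the first (resp.\ second) square shows that $e^{*}$ (resp.\ $e_{\sharp}$ when defined) sends such an object into the essential image of $\xi_L$ (resp.\ $\xi_K$), giving the desired restriction functors between the $\mathrm{gr}$-subcategories.
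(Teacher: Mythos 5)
The paper gives no independent argument here: the lemma is imported verbatim from \cite[Proposition 3.1.13]{AGV22}, and your proposal is essentially the intended justification, namely that both squares express the compatibility of the Monsky--Washnitzer functor (rigidity plus the generic-fibre functor $(-)_{\eta}$) with $f^{*}$ in general and with $f_{\sharp}$ when the induced map of formal schemes is \'{e}tale, which for $\spf(\mathcal{O}_L)\to\spf(\mathcal{O}_K)$ is exactly the unramified case. So in substance you are on the same route as the paper. Two local points in your write-up need repair, though. First, the shortcut ``obtain the second square by taking left adjoints of the first'' is not a legitimate formal step: passing to left adjoints of $\xi_L\circ\bar{e}^{*}\simeq e^{*}\circ\xi_K$ would require a left adjoint of $\xi$, which does not exist, and passing to right adjoints produces the square relating $\bar{e}_{*}\circ\chi_L$ and $\chi_K\circ e_{*}$, not the one asserted. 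The second square is the mate (Beck--Chevalley transform) of the first, built from the units and counits of $\bar{e}_{\sharp}\dashv\bar{e}^{*}$ and $e_{\sharp}\dashv e^{*}$, and the fact that this mate is an equivalence is precisely the non-formal content; your alternative, direct argument --- commutation of $(-)_{\eta}$ with $\sharp$-pushforward along \'{e}tale morphisms of formal schemes --- is the correct one and is what the cited proposition of \cite{AGV22} supplies. Note also that unramifiedness is not needed for $e_{\sharp}$ or $\bar{e}_{\sharp}$ to exist (both $e$ and $\bar{e}$ are \'{e}tale for any finite separable $L/K$ with $k$ perfect); it is needed so that $\tilde{e}_{\sharp}$ exists on formal motives and the square commutes, and the commutativity genuinely fails in the ramified case (already on the tensor unit).

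Second, your description of $\rigmot_{\mathrm{gr}}(K)$ as the retracts of objects in the essential image of $\xi_K$ is not what Definition \ref{df:rigmotgr} says: $\rigmot_{\mathrm{gr}}(K)$ is the essential image of $\tilde{\xi}$ on all $\chi{\mathbbm 1}$-modules, equivalently (Remark \ref{rmk:ssred-in-gr}) the full subcategory generated by the image of $\xi_K$ under small colimits, and a general $\chi{\mathbbm 1}$-module is not a retract of a free one. The ``in particular'' part still follows, but the justification must run through colimit generation: $e^{*}$ and $e_{\sharp}$ are left adjoints, hence colimit-preserving, so the commutativity of the respective square on objects $\xi_K M$ (resp.\ $\xi_L M$) suffices to show they carry $\rigmot_{\mathrm{gr}}(K)$ into $\rigmot_{\mathrm{gr}}(L)$ (resp.\ $\rigmot_{\mathrm{gr}}(L)$ into $\rigmot_{\mathrm{gr}}(K)$). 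With these two corrections your argument matches the one the paper delegates to \cite{AGV22}.
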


The Galois group $\gal (L/K)$ acts on $\rigmot_{\mathrm{gr}}(L)$ naturally thanks to Lemma \ref{lem:gen-fib-pullback-sharp}. In other words, we have a functor
\begin{align*}
  A_{L/K, \mathrm{gr}} \colon \subss{B} \gal (L/K) &\to \calg(\prl)\\
  *&\mapsto \rigmot_{\mathrm{gr}}(L)\\
  (\sigma \in \gal (L/K))&\mapsto (\sigma^{*} \colon \rigmot_{\mathrm{gr}}(L) \to \rigmot_{\mathrm{gr}}(L)).
\end{align*}
The natural inclusion $\rigmot_{\mathrm{gr}}(L) \hookrightarrow \rigmot(L)$ defines a natural transformation between these actions $\iota \colon A_{L/K, \mathrm{gr}} \to A_{L/K}$, where $A_{L/K}$ is defined in (\ref{eq:df-gal-action}). Moreover, this makes $\rigmot_{\mathrm{gr}}(L)$ be a $G$-subcategory of $\rigmot(L)$ in the sense of Definition \ref{df:G-subcat}. Thus, we deduce from Corollary \ref{cor:grp-inv-preserve-ff} that:

\begin{cor}
  \label{cor:grmot-gal-action}
The inclusion functor induces a fully faithful functor
  \[
\rigmot_{\mathrm{gr}}(L)^{\htcat \gal (L/K)} \hookrightarrow \rigmot(L)^{\htcat \gal (L/K)}.
\]
\end{cor}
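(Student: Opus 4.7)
The plan is to observe that this corollary is essentially a direct application of Corollary \ref{cor:grp-inv-preserve-ff}, whose main input is the fact that $\rigmot_{\mathrm{gr}}(L)$ sits inside $\rigmot(L)$ as a $\gal(L/K)$-subcategory in the sense of Definition \ref{df:G-subcat}. The finiteness of $\gal(L/K)$ is automatic since $L/K$ is a finite Galois extension.

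First I would verify that the functor $A_{L/K,\mathrm{gr}} \colon \subss{B}\gal(L/K)\to \calg(\prl)$ is well-defined. The critical point is that for each $\sigma\in \gal(L/K)$, the autoequivalence $\sigma^{*}\colon \rigmot(L)\to \rigmot(L)$ restricts to $\rigmot_{\mathrm{gr}}(L)$. This follows from the first square in Lemma \ref{lem:gen-fib-pullback-sharp}, applied to $\sigma\colon \spa(L)\to \spa(L)$ (an isomorphism, hence trivially commuting with everything): we have $\sigma^{*}\circ \xi_{L}\simeq \xi_{L}\circ \bar\sigma^{*}$, so $\sigma^{*}$ sends the essential image of $\xi_{L}$ into itself. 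Formally, to promote this from a statement about $1$- and $2$-cells to a functor out of $\subss{B}\gal(L/K)$ one uses that $\rigmot_{\mathrm{gr}}(L)\hookrightarrow \rigmot(L)$ is fully faithful (Proposition \ref{pro:rigmot-as-module}), so the space of lifts of the diagram is either empty or contractible; since compatibility has been checked on objects and generating morphisms, the lift exists and is essentially unique.

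Next I would construct the natural transformation $\iota\colon A_{L/K,\mathrm{gr}}\to A_{L/K}$. Objectwise it is the inclusion $\rigmot_{\mathrm{gr}}(L)\hookrightarrow \rigmot(L)$, and the square-compatibility (for each $\sigma\in \gal(L/K)$) is exactly the commutativity discussed above. The uniqueness/contractibility argument from Remark \ref{rmk:unique-subGcat} ensures that these data assemble coherently into a morphism in $\fun(\subss{B}\gal(L/K), \calg(\prl))$ (and in particular in $\fun(\subss{B}\gal(L/K), \bigcatinf)$) whose image under the evaluation functor $\mathrm{ev}$ is the fully faithful inclusion $\rigmot_{\mathrm{gr}}(L)\hookrightarrow \rigmot(L)$. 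By Definition \ref{df:G-subcat}, this exhibits $\rigmot_{\mathrm{gr}}(L)$ as a $\gal(L/K)$-subcategory of $\rigmot(L)$.

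Finally, applying Corollary \ref{cor:grp-inv-preserve-ff} with $G=\gal(L/K)$, $\mathcal{C}=\rigmot(L)$ and $\mathcal{D}=\rigmot_{\mathrm{gr}}(L)$ yields the desired fully faithful functor $\rigmot_{\mathrm{gr}}(L)^{\htcat \gal(L/K)}\hookrightarrow \rigmot(L)^{\htcat \gal(L/K)}$. The only potential subtlety in the whole argument is the coherence question in the second paragraph: producing a genuine functor out of $\subss{B}\gal(L/K)$ (not just a compatible collection of $1$- and $2$-cells) and a genuine natural transformation between two such functors. However, the fully faithfulness of the inclusion reduces this to a finite amount of homotopy-theoretic data whose coherence space is contractible, so no real difficulty arises.
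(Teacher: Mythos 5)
Your proposal matches the paper's argument: the paper likewise uses Lemma \ref{lem:gen-fib-pullback-sharp} to see that the Galois action restricts to $\rigmot_{\mathrm{gr}}(L)$, observes (via Remark \ref{rmk:unique-subGcat}) that this exhibits $\rigmot_{\mathrm{gr}}(L)$ as a $\gal(L/K)$-subcategory of $\rigmot(L)$ in the sense of Definition \ref{df:G-subcat}, and then concludes by Corollary \ref{cor:grp-inv-preserve-ff}. Your extra remarks on assembling the coherent action and the natural transformation via full faithfulness are consistent with the paper's treatment, so the proof is correct and essentially identical in approach.
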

\subsubsection{The Galois Descent}
\label{sec:galo-invar-motiv}
Using the Galois descent of rigid motives, we can relate motives over $K$ to motives over the Galois extension field $L$.

\begin{pro}
  \label{pro:galois-descent-mot}
  The canonical functor
  \begin{equation}
    \label{eq:galois-descent}
\tilde{e}^{*}\colon \rigmot(K) \xrightarrow{\simeq} \rigmot(L)^{\htcat \gal (L/K)}    ,
  \end{equation}
induced by $e^{*}\colon \rigmot(K) \to \rigmot(L)$, is an equivalence in $\calg(\prl)$. 
\end{pro}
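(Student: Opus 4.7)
The plan is to deduce the equivalence from étale descent for $\rigmot$ applied to the cover $e\colon \spa(L, \mathcal{O}_L) \to \spa(K, \mathcal{O}_K)$, together with an explicit identification of the associated cosimplicial diagram as the bar construction for the Galois action.

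First I would invoke the étale (hyper)descent property of $\rigmot$ as a functor into $\calg(\prlc)$ established in \cite{AGV22}, where it is built into the very definition of $\mathbf{RigSH}_{\text{\'{e}t}}$. Since $L/K$ is finite separable, $e$ is a finite étale cover; descent along $e$ gives, in $\calg(\prl)$, an equivalence
\[
\rigmot(K) \xrightarrow{\simeq} \lim_{[n] \in \Delta} \rigmot\bigl(\check{C}^{n}(e)\bigr),
\]
where $\check{C}^{\bullet}(e)$ is the Čech nerve of $e$. The Galois hypothesis yields an explicit description of this nerve: writing $G = \gal(L/K)$, the map $L \otimes_{K} L \to \prod_{\sigma \in G} L$, $a \otimes b \mapsto (a \sigma(b))_{\sigma}$, is a $K$-algebra isomorphism, and iterating produces
\[
\check{C}^{n}(e) \simeq \coprod_{G^{n}} \spa(L, \mathcal{O}_{L}),
\]
with face maps governed by the multiplication of $G$ and the canonical $G$-action on $\spa(L, \mathcal{O}_{L})$, and degeneracies inserting the identity. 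Since $\rigmot$ sends finite coproducts of adic spaces to products of presentable $\infty$-categories, this yields $\rigmot(\check{C}^{n}(e)) \simeq \rigmot(L)^{G^{n}}$ in $\calg(\prl)$.

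Finally I would match the cosimplicial object $[n] \mapsto \rigmot(L)^{G^{n}}$ with the one computing $\rigmot(L)^{\htcat G}$. By Lemma \ref{lem:BG-georeal}, $\subss{B}G$ is the geometric realization of the simplicial set with $n$-simplices $G^{n}$ and structure maps given by the bar construction; consequently the homotopy fixed points of the action $A_{L/K}$ of Construction \ref{cons:gal-inv} are computed by precisely this cosimplicial object. Composing with the étale descent equivalence then delivers the desired equivalence $\tilde{e}^{*}$ in $\calg(\prl)$.

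The main obstacle is the middle bookkeeping step: verifying that the Čech face and degeneracy maps, which come from projections between fiber powers of $\spa(L, \mathcal{O}_{L})$ over $\spa(K, \mathcal{O}_{K})$, correspond under the isomorphism $L^{\otimes_{K}(n+1)} \simeq \prod_{G^{n}} L$ to the standard bar structure maps encoding the $G$-action on $\spa(L, \mathcal{O}_{L})$. Once this matching is established on underlying $\infty$-categories, the promotion to $\calg(\prl)$ is automatic, since limits in $\calg(\prl)$ are detected after forgetting to $\prl$ by Proposition \ref{pro:clim-calgpr}.
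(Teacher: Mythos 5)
Your argument is correct and is essentially the paper's own proof, run in the opposite direction: the paper first computes $\rigmot(L)^{\htcat \gal(L/K)}$ as a cosimplicial limit via Lemma~\ref{lem:BG-georeal} and then observes this limit diagram agrees with the \v{C}ech nerve of $e^{*}$, while you start from the \'{e}tale descent equivalence along $e$ and then match the resulting \v{C}ech cosimplicial object with the bar construction computing homotopy fixed points. Both hinge on the same two ingredients — the \'{e}tale descent property of $\rigmot$ from \cite{AGV22} and the identification of the \v{C}ech nerve of a Galois cover with the bar construction encoding the $\gal(L/K)$-action — and both reduce from $\calg(\prl)$ to underlying $\infty$-categories via Proposition~\ref{pro:clim-calgpr}. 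The bookkeeping step you flag as the main obstacle (that the face/degeneracy maps of the \v{C}ech nerve correspond, under $L^{\otimes_K(n+1)} \simeq \prod_{G^n} L$, to the bar-construction structure maps) is exactly the point the paper passes over silently when it asserts ``On the other hand, this is also the \v{C}ech nerve of the canonical map''; it is a standard verification, but your explicit acknowledgment of it is a fair reading of where the real content lies.
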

\begin{proof}
  Since the forgetful functor $\calg(\prl) \to \prl$ is limit-preserving and conservative by Proposition \ref{pro:clim-calgpr}, and limits in $\prl$ are computed as $\infty$-categories by Proposition \ref{pro:lim-prl-prr}, it suffices to prove \eqref{eq:galois-descent} is an equivalence in $\bigcatinf$. In fact, this is a direct consequence of the \'{e}tale descent for
$\rigmot(-)$. We can write $B_{\bullet} \gal (L/K)$ as
a colimit of a geometric realization:
\[
B_{\bullet} \gal (L/K) \simeq \colim_{\cnerve(\Delta)^{\opp}} S_{\bullet},
\]
where $S_n= \gal (L/K)^n$, see Lemma \ref{lem:BG-georeal}. It follows that $\rigmot(L)^{\htcat \gal
  (L/K)}$ can be identified with the limit of a cosimplicial object
$X^{\bullet}$ of $\prl$, where $X^n$ is a limit of the induced diagram
\[
S_n \to B_{\bullet} \gal (L/K)  \to \prl.
\]
Since $S_n$ is discrete, we have $X^n\simeq
\prod_{\sigma_1,\dots,\sigma_n \in \gal (L/K)} \rigmot(L) $. In other words, we have
\[
  \rigmot(L)^{\htcat \gal (L/K)} \simeq \lim \left(
\begin{tikzcd}
\rigmot(L) \arrow[r, shift left] \arrow[r, shift right] & {\displaystyle\prod_{\gal(L/K)}} \rigmot(L) \arrow[r] \arrow[r, shift right] \arrow[r, shift left] & \cdots
\end{tikzcd}
  \right)
\]
On the other hand, this is also the \v{C}ech nerve of the canonical
map
\[
e^{*}\colon \rigmot(L) \to \rigmot(K).
\]
We therefore deduce from the \'{e}tale descent (see \cite[Theorem 2.3.4 \& Remark 2.3.5]{AGV22}).
\end{proof}

We next study the restriction of the equivalence (\ref{eq:galois-descent}) to $\rigmot_{\mathrm{gr}}(L)^{\htcat \gal (L/K)}$.

\begin{notation}
  We define $\rigmot_{L \text{-}\mathrm{gr}}(K)$ by the pullback diagram
  \begin{equation}
    \label{eq:Lgr}
\begin{tikzcd}
\rigmot_{L \text{-}\mathrm{gr}}(K) \arrow[d] \arrow[r] & \rigmot_{\mathrm{gr}}(L) \arrow[d, hook] \\
\rigmot(K) \arrow[r]                                   & \rigmot(L)                              
\end{tikzcd}
  \end{equation}
in $\calg(\prl)$.
\end{notation}

\begin{rmk}
  \label{rmk:Lgr-subcat}
  The underlying functor $\rigmot_{L \text{-}\mathrm{gr}}(K) \to \rigmot(K)$ is fully faithful and we can identify (up to equivalence) $\rigmot_{L \text{-}\mathrm{gr}}(K)$ with the full subcategory of $\rigmot(K)$ spanned by those $M$ satisfying $e^{*}M \in \rigmot_{\mathrm{gr}}(L)$, where $e\colon \spa(L) \to \spa(K)$ is the structure map. In particular, $\rigmot_{\mathrm{gr}}(K)$ is a full subcategory of $\rigmot_{L \text{-}\mathrm{gr}}(K)$ thanks to Lemma \ref{lem:gen-fib-pullback-sharp}.
\end{rmk}

We will use the identification in Remark \ref{rmk:Lgr-subcat} implicitly from now on.

\begin{pro}
  \label{pro:galois-descent-mot-gdrd}
  The canonical functor $\rigmot_{L \text{-}\mathrm{gr}}(K) \to \rigmot_{\mathrm{gr}}(L)$ factors through the canonical functor $\rigmot_{\mathrm{gr}}(L)^{\htcat \gal (L/K)}\to \rigmot_{\mathrm{gr}}(L)$, and induces an equivalence
\begin{figure}
[H]\centering
\begin{tikzcd}
                                                                          & \rigmot_{\mathrm{gr}}(L)^{\htcat \gal (L/K)} \arrow[d] \\
\rigmot_{L \text{-}\mathrm{gr}}(K) \arrow[r] \arrow[ru, "\simeq", dashed] & \rigmot_{\mathrm{gr}}(L)                              
\end{tikzcd}
\end{figure}\noindent
in $\calg(\prl)$. Moreover, we have a commutative diagram (up to homotopy)
\begin{figure}
[H]\centering
\begin{tikzcd}
\rigmot_{L \text{-}\mathrm{gr}}(K) \arrow[r, "\simeq", dashed] \arrow[d, hook'] & \rigmot_{\mathrm{gr}}(L)^{\htcat \gal (L/K)} \arrow[r] \arrow[d, hook'] & \rigmot_{\mathrm{gr}}(L) \arrow[d, hook'] \\
\rigmot(K) \arrow[r, "\tilde{e}^{*}"',"\simeq"]                                                  & \rigmot(L)^{\htcat \gal (L/K)} \arrow[r]                                & \rigmot(L)                               
\end{tikzcd}
\end{figure}\noindent
in $\calg(\prl)$, where the middle fully faithful functor is the one in Corollary \ref{cor:grmot-gal-action}.
\end{pro}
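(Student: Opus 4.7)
The plan is to deduce the proposition from the Galois descent equivalence $\tilde{e}^{*}$ of Proposition \ref{pro:galois-descent-mot} combined with the pullback defining $\rigmot_{L\text{-}\mathrm{gr}}(K)$ in \eqref{eq:Lgr}. Write $G = \gal(L/K)$ for brevity. Since the forgetful functors $\calg(\prl) \to \prl \hookrightarrow \bigcatinf$ both preserve and reflect small limits by Propositions \ref{pro:clim-calgpr} and \ref{pro:lim-prl-prr}, and the first is moreover conservative, it suffices to verify all pullback identities at the level of $\bigcatinf$; the $\calg(\prl)$-enhancement then comes along for free because every functor involved is already known to be a morphism in $\calg(\prl)$.

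The heart of the argument is to show that the square
\[
\begin{tikzcd}
\rigmot_{\mathrm{gr}}(L)^{\htcat G} \arrow[r] \arrow[d, hook] & \rigmot_{\mathrm{gr}}(L) \arrow[d, hook] \\
\rigmot(L)^{\htcat G} \arrow[r] & \rigmot(L)
\end{tikzcd}
\]
is Cartesian in $\bigcatinf$. The left vertical arrow is fully faithful by Corollary \ref{cor:grp-inv-preserve-ff}, applied to the $G$-subcategory structure on $\rigmot_{\mathrm{gr}}(L) \subseteq \rigmot(L)$ furnished by Lemma \ref{lem:gen-fib-pullback-sharp}. I would then compare essential images inside $\rigmot(L)^{\htcat G}$. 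The pullback $\rigmot(L)^{\htcat G} \times_{\rigmot(L)} \rigmot_{\mathrm{gr}}(L)$ identifies with the full subcategory of equivariant motives $(M,\rho)$ whose underlying object $M$ lies in $\rigmot_{\mathrm{gr}}(L)$, since pulling back along a fully faithful functor yields a fully faithful functor with the evident essential image. Any object of $\rigmot_{\mathrm{gr}}(L)^{\htcat G}$ clearly lands there. Conversely, given such $(M,\rho)$, Lemma \ref{lem:gen-fib-pullback-sharp} gives $\sigma^{*}M \in \rigmot_{\mathrm{gr}}(L)$ for every $\sigma \in G$, and the fully faithful inclusion $\rigmot_{\mathrm{gr}}(L) \hookrightarrow \rigmot(L)$ (together with Proposition \ref{pro:mapsp-of-lim}) ensures that the coherence data encoded by $\rho$ is already present in $\rigmot_{\mathrm{gr}}(L)$; hence $(M,\rho)$ descends to an object of $\rigmot_{\mathrm{gr}}(L)^{\htcat G}$.

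With this Cartesian square in hand, the desired equivalence follows by pasting. Replacing $\rigmot(K)$ by $\rigmot(L)^{\htcat G}$ via $\tilde{e}^{*}$ in the defining pullback \eqref{eq:Lgr} of $\rigmot_{L\text{-}\mathrm{gr}}(K)$ and then applying the Cartesian square above, one obtains
\[
\rigmot_{L\text{-}\mathrm{gr}}(K) \simeq \rigmot(L)^{\htcat G} \times_{\rigmot(L)} \rigmot_{\mathrm{gr}}(L) \simeq \rigmot_{\mathrm{gr}}(L)^{\htcat G}.
\]
The factorization of the canonical map $\rigmot_{L\text{-}\mathrm{gr}}(K) \to \rigmot_{\mathrm{gr}}(L)$ through $\rigmot_{\mathrm{gr}}(L)^{\htcat G}$, and the commutativity of the larger diagram in the ``moreover'' clause, are then formal consequences read off from the universal properties of the pullbacks at each stage, together with the factorization $e^{*} \simeq (\text{forget}) \circ \tilde{e}^{*}$. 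I expect the main obstacle to be precisely the middle paragraph: while the square is visibly commutative, showing it is Cartesian requires the slightly subtle argument about descending the Galois coherence data across the fully faithful inclusion — all other steps are either formal manipulations with limits or direct appeals to the results recalled earlier.
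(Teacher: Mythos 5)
Your proof is correct, but it is organized differently from the paper's, so let me compare the two and flag one informality.

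The paper does not phrase the argument in terms of a Cartesian square. Instead it (i) observes that the restriction $e^{*}_{\mathrm{gr}}\colon \rigmot_{L\text{-}\mathrm{gr}}(K) \to \rigmot_{\mathrm{gr}}(L)$ carries a coherent $\gal(L/K)$-invariance, so by the universal property of the limit $\rigmot_{\mathrm{gr}}(L)^{\htcat \gal(L/K)}$ it factors through it; (ii) notes that the resulting functor is a restriction of $\tilde e^{*}$, hence fully faithful; and (iii) proves essential surjectivity by hand: lift an object $M\in\rigmot_{\mathrm{gr}}(L)^{\htcat \gal(L/K)}$ along $\tilde e^{*}$ to some $N\in\rigmot(K)$, observe that the underlying object $e^{*}N$ of $M$ lies in $\rigmot_{\mathrm{gr}}(L)$, so $N\in\rigmot_{L\text{-}\mathrm{gr}}(K)$. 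Your argument instead isolates, as a stand-alone statement, the fact that
\[
\begin{tikzcd}
\rigmot_{\mathrm{gr}}(L)^{\htcat G} \arrow[r] \arrow[d, hook] & \rigmot_{\mathrm{gr}}(L) \arrow[d, hook] \\
\rigmot(L)^{\htcat G} \arrow[r] & \rigmot(L)
\end{tikzcd}
\]
is Cartesian, and then deduces the proposition by pasting with the defining square \eqref{eq:Lgr} and the equivalence $\tilde e^{*}$. This is a cleaner packaging: the pasting makes the factorization, the equivalence, and the commutativity of the ``moreover'' diagram all fall out together, whereas the paper treats these more piecemeal.

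One caution: for the converse direction of your Cartesianity claim (that an object $(M,\rho)$ of $\rigmot(L)^{\htcat G}$ with $M\in\rigmot_{\mathrm{gr}}(L)$ lifts to $\rigmot_{\mathrm{gr}}(L)^{\htcat G}$), citing Proposition \ref{pro:mapsp-of-lim} is not really the right tool — that proposition identifies mapping spaces in a limit and would give you full faithfulness, not essential surjectivity onto the stated full subcategory. What you actually need is the general fact that, for a pointwise fully faithful natural transformation of diagrams $B_\bullet G \to \bigcatinf$ whose levelwise inclusions are stable under the structure functors (i.e.\ a $G$-subcategory in the sense of Definition \ref{df:G-subcat}), the induced map on limits is fully faithful with essential image exactly those sections whose underlying object lies in the subcategory; this is most cleanly seen via the (co)Cartesian-fibration model for the limit, where a coCartesian section landing fiberwise in the full sub-fibration is automatically a coCartesian section of the sub-fibration. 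Your sketch ``the coherence data is already present'' captures the right idea, but to turn it into a proof you should appeal to such a model rather than to the mapping-space formula. The paper glosses over a parallel coherence point (that $e^{*}_{\mathrm{gr}}$ is $\gal(L/K)$-invariant ``coherently''), so both proofs have the same level of informality at the critical juncture; your version just makes it more visible.
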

\begin{proof}
  Using the identification in Remark \ref{rmk:Lgr-subcat}, the pullback functor $e^{*}\colon \rigmot(K) \to \rigmot(L)$ restricts to a functor
\[
e^{*}_{\mathrm{gr}}\colon \rigmot_{L \text{-}\mathrm{gr}}(K) \to \rigmot_{\mathrm{gr}}(L),
\]
which is also $\gal (L/K)$-invariant. Therefore, we get a functor
\[
\rigmot_{L \text{-}\mathrm{gr}}(K) \to
\rigmot_{\mathrm{gr}}(L)^{\htcat \gal (L/K)}
\]
in $\calg(\prl)$ by the definition of $\rigmot_{\mathrm{gr}}(L)^{\htcat \gal (L/K)}$. In fact, it is just the restriction of the equivalence (\ref{eq:galois-descent}). This gives the commutative diagram in the last assertion except for the dotted equivalence. 

Since the forgetful functor $\calg(\prl) \to \prl$ is conservative, it suffices to show this restriction is an equivalence in $\prl$. As a restriction of an equivalence, it is clearly fully faithful. It remains to prove the essential surjectivity. Let $M $ be an
object of $\rigmot_{\mathrm{gr}}(L)^{\htcat \gal (L/K)}$. Using the equivalence (\ref{eq:galois-descent}) in Proposition \ref{pro:galois-descent-mot}, we can find a (unique) $N \in \rigmot(K)$ such that $\tilde{e}^{*}(N) \simeq M$. The commutativity of the diagram shows $e^{*}N \in \rigmot_{\mathrm{gr}}(L)$, i.e., $ N \in \rigmot_{L \text{-}\mathrm{gr}}(K)$. Therefore, this proves $M \simeq e^{*}N \in \rigmot_{\mathrm{gr}}(L)^{\htcat \gal (L/K)}$.
\end{proof}

\subsubsection{The Compact Generation}
\label{sec:compact-generators}

Now we study the compact generation of $\rigmot_{\mathrm{gr}}(L)^{\htcat \gal (L/K)}$ and give an explicit description for its compact generators.

\begin{lem}
  \label{lem:left-adjoint-G-inv}
  Let $e\colon \spa(L) \to \spa(K)$ be the structure morphism.
  \begin{enumerate}
  \item Restricting $e_{*} \colon \rigmot(L) \to \rigmot(K)$ to $\rigmot_{\mathrm{gr}}(L)$ gives a left adjoint
    \[
e_{*} \colon \rigmot_{\mathrm{gr}}(L) \to \rigmot_{L-\mathrm{gr}}(K)
\]
of the pullback functor $e^{*}_{\mathrm{gr}} \colon \rigmot_{L \text{-}\mathrm{gr}}(K)\to \rigmot_{\mathrm{gr}}(L)$.

\item Under the equivalence $\rigmot_{L \text{-}\mathrm{gr}}(K) \simeq \rigmot_{\mathrm{gr}}(L)^{\htcat \gal (L/K)}$ in Proposition \ref{pro:galois-descent-mot-gdrd}, the restricted functor in $(1)$ is both left and right adjoint to the forgetful functor $\iota_L\colon
\rigmot_{\mathrm{gr}}(L)^{\htcat \gal (L/K)} \to \rigmot_{\mathrm{gr}}(L)$.
\end{enumerate}
\end{lem}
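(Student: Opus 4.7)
The plan is in two parts matching the lemma. For part (1), I will use that the structure morphism $e \colon \spa(L) \to \spa(K)$ is finite étale, so the six-functor formalism for rigid analytic motives of \cite{ayorigmot, AGV22} identifies $e_{*} \simeq e_{!} \simeq e_{\sharp}$—the first equivalence because $e$ is proper, the second because $e$ is étale of relative dimension zero. In particular $e_{*}$ on the ambient categories is already a left adjoint to $e^{*}$, so it remains only to verify that it restricts to a functor $\rigmot_{\mathrm{gr}}(L) \to \rigmot_{L\text{-}\mathrm{gr}}(K)$. By Remark \ref{rmk:Lgr-subcat}, this amounts to checking $e^{*} e_{*} M \in \rigmot_{\mathrm{gr}}(L)$ for $M \in \rigmot_{\mathrm{gr}}(L)$. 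Using the Galois decomposition $\spa(L) \times_{\spa(K)} \spa(L) \simeq \coprod_{\sigma \in \gal(L/K)} \spa(L)$ together with finite étale base change, I will derive
\[
e^{*} e_{*} M \simeq \bigoplus_{\sigma \in \gal(L/K)} \sigma^{*} M.
\]
Since $\rigmot_{\mathrm{gr}}(L)$ is stable under the Galois action by Lemma \ref{lem:gen-fib-pullback-sharp} and closed under finite direct sums, the right-hand side lies in $\rigmot_{\mathrm{gr}}(L)$, completing the verification.

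For part (2), I will first identify the forgetful functor $\iota_{L}$ with $e^{*}_{\mathrm{gr}}$ under the equivalence $\rigmot_{L\text{-}\mathrm{gr}}(K) \simeq \rigmot_{\mathrm{gr}}(L)^{\htcat \gal(L/K)}$ of Proposition \ref{pro:galois-descent-mot-gdrd}: evaluating a $\gal(L/K)$-diagram at the base vertex corresponds precisely to pulling back along $e$, which is what the rightmost square in that proposition records. Part (1) then supplies the adjunction $e_{*} \dashv e^{*}_{\mathrm{gr}}$, which transfers to $e_{*} \dashv \iota_{L}$. On the other hand, the standard adjunction $e^{*} \dashv e_{*}$ on the ambient $\infty$-categories restricts to $e^{*}_{\mathrm{gr}} \dashv e_{*}$ on the subcategories in question—unit and counit survive restriction because both functors send the relevant full subcategories into each other (by Lemma \ref{lem:gen-fib-pullback-sharp} and part (1))—and this transfers to $\iota_{L} \dashv e_{*}$, as desired.

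The main obstacle will be securing the identification $e_{*} \simeq e_{\sharp}$ together with the explicit base change formula $e^{*} e_{*} M \simeq \bigoplus_{\sigma} \sigma^{*} M$ with precise citations, since neither appears in the excerpt. An alternative route that avoids the full six-functor formalism is to exploit the $\Q$-linearity of $\rigmot(L)$: for a finite group action on a $\Q$-linear stable $\infty$-category, ambidexterity forces the forgetful functor $\iota_{L}$ to admit coinciding left and right adjoints, and matching this double adjoint with $e_{*}$ then reduces to the same orbit-type calculation above, tested against the compact generators coming from $\rigmot_{\mathrm{gr}}(L)$.
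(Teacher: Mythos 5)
Your proposal is correct but takes a genuinely different route from the paper. The paper's proof of (1) factors $e$ through the maximal unramified subextension $L_0/K$ with residue field $k_L$, invokes a result from \cite{BKV25} that $t^*$ is an equivalence for the totally ramified piece $t\colon \spa(L)\to\spa(L_0)$, and then uses the commutativity of the Monsky--Washnitzer squares from Lemma~\ref{lem:gen-fib-pullback-sharp} (the $e_\sharp$ compatibility being available only for the unramified leg $e_0$) to arrive at the identity $e^{*}e_{*}\xi_L \simeq \xi_L\bar e^{*}\bar e_{\sharp}$, from which both the left-adjointness and the fact that $e_*$ lands in $\rigmot_{L\text{-}\mathrm{gr}}(K)$ follow. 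You instead work on the generic fiber directly: since $L/K$ is Galois, hence separable, $e$ is finite étale as a morphism of rigid analytic spaces, so $e_\sharp\simeq e_!\simeq e_*$, and the Galois splitting $\spa(L)\times_{\spa(K)}\spa(L)\simeq\coprod_{\sigma}\spa(L)$ gives $e^{*}e_{*}M\simeq\bigoplus_{\sigma}\sigma^{*}M$; combined with Galois stability of $\rigmot_{\mathrm{gr}}(L)$ this yields the same conclusion without the factorization or the \cite{BKV25} input. Your approach is shorter and rests only on standard six-functor facts, but it bypasses the Monsky--Washnitzer expression $\xi_L\bar e^{*}\bar e_{\sharp}$ which the paper explicitly records in \eqref{eq:underlying-cptgen} and reuses in the proof of Proposition~\ref{pro:proper-sm-under-lad}; if you were to also reprove that proposition you would need to translate your $\bigoplus_{\sigma\in\gal(L/K)}\sigma^{*}$ expression back into the special-fiber picture. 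Your handling of part (2) matches the paper's: identify $\iota_L$ with $e^{*}_{\mathrm{gr}}$ under Proposition~\ref{pro:galois-descent-mot-gdrd}, get the left adjunction from (1), and get the right adjunction by restricting the ambient $e^{*}\dashv e_{*}$ to the full subcategories. The one thing you should settle before submission is the precise citation for $e_{\sharp}\simeq e_{!}\simeq e_{*}$ and the splitting base change in $\mathbf{RigSH}_{\text{ét}}$ — these are in the six-functor package of \cite{AGV22}, and you correctly flag this as the remaining bookkeeping. The ambidexterity alternative you mention is also sound in a $\Q$-linear setting and would give the two-sided adjoint of $\iota_L$ for free, but matching it with $e_{*}$ still requires the same generator computation, so it does not save work.
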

\begin{proof}
Let $L_0$ be the finite unramified extension of $K$ inside $L$ with the residue field $k_L$. Then $e \colon \spa(L) \to \spa(K)$ factors as $\spa(L) \xrightarrow{t} \spa(L_0) \xrightarrow{e_0} \spa(K)$. Since $t^{*}$ is an equivalence by \cite[Proposition 3.23]{BKV25}, we have a commutative diagram
\begin{equation}
  \label{eq:lowstargr}
\begin{tikzcd}
                                                                                       & \rigmot(L) \arrow[d, "t_*"]                                          \\
\agmot(k_L) \arrow[d, "\bar{e}_{\sharp}"'] \arrow[ru, "\xi_L"] \arrow[r, "\xi_{L_0}"'] & \rigmot(L_0) \arrow[d, "{e_{0,\sharp}\simeq e_{0,!}\simeq e_{0,*}}"] \\
\agmot(k) \arrow[r, "\xi_K"]                                                           & \rigmot(K)                                                          
\end{tikzcd}
\end{equation}
where the square is commutative thanks to Lemma \ref{lem:gen-fib-pullback-sharp}. Then we have
\begin{equation}
  \label{eq:underlying-cptgen}
  e^{*}e_{*}\xi_L \simeq e^{*} \xi_K \bar{e}_{\sharp} \simeq \xi_L \bar{e}^{*} \bar{e}_{\sharp}.
\end{equation}
This proves ($1$). For part ($2$), the forgetful functor $\iota_{L}$ corresponds to the pullback functor $e^{*}$ under the identification. Thus, we can deduce from ($1$).
\end{proof}

\begin{notation}
  \label{not:nm-functor}
  From now on, we let $\Nm_L\colon \rigmot_{\mathrm{gr}}(L) \to \rigmot_{\mathrm{gr}}(L)^{\htcat \gal (L/K)}$ denote the left adjoint of the forgetful functor $\iota_{L}$. We call it the \myemph{norm functor}\footnote{This is suggested due to Proposition \ref{pro:proper-sm-under-lad} below. See also \cite[\S 6.1.6]{HA} and \cite{HL13} for additional information.}.
\end{notation}

\begin{pro}
  \label{pro:cpt-gen-gal-inv-gd}
  The $\infty$-category $\rigmot_{\mathrm{gr}}(L)^{\htcat \gal (L/K)}$
  is compactly generated. A set of compact generators is given, up to negative shifts and Tate twists, by $\Nm_L \xi_LM(X)$, where $X$ runs
  through proper smooth algebraic varieties over $k_L$.
\end{pro}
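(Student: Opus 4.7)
The plan is to recognize $\rigmot_{\mathrm{gr}}(L)^{\htcat \gal(L/K)}$ as a limit in $\prrc$, apply Proposition~\ref{pro:lim-in-prrc} to extract compact generators, and identify the relevant left adjoint via the adjunction $\Nm_L \dashv \iota_L$ of Lemma~\ref{lem:left-adjoint-G-inv}.

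First, I would verify that the action diagram
\[
A_{L/K,\mathrm{gr}} \colon \subss{B} \gal(L/K) \to \bigcatinf
\]
takes values in $\prrc$: each $\sigma^{*}$ is an auto-equivalence of $\rigmot_{\mathrm{gr}}(L)$, with inverse $(\sigma^{-1})^{*}$, so it is simultaneously a left and a right adjoint preserving compact objects and filtered colimits; and $\rigmot_{\mathrm{gr}}(L) \simeq \module{\chi {\mathbbm 1}}(\agmot(k_L))$ is compactly generated by Proposition~\ref{pro:rigmot-as-module}. Proposition~\ref{pro:lim-in-prrc} then yields that the limit in $\prrc$ coincides with the one in $\bigcatinf$ (and hence, via Proposition~\ref{pro:lim-prl-prr}, with $\rigmot_{\mathrm{gr}}(L)^{\htcat \gal(L/K)}$), and is compactly generated by the images, under the left adjoint $F$ of the canonical projection $G \colon \rigmot_{\mathrm{gr}}(L)^{\htcat \gal(L/K)} \to \rigmot_{\mathrm{gr}}(L)$, of compact objects of $\rigmot_{\mathrm{gr}}(L)$. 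By construction $G$ is the forgetful functor $\iota_L$, and Lemma~\ref{lem:left-adjoint-G-inv}~(2) identifies $F$ with the norm functor $\Nm_L$.

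It then remains to exhibit a convenient set of compact generators of $\rigmot_{\mathrm{gr}}(L)$. Under the equivalence with $\module{\chi {\mathbbm 1}}(\agmot(k_L))$ (Proposition~\ref{pro:rigmot-as-module}), the free-module functor $\xi_L$ sends compact generators of $\agmot(k_L)$ to compact generators. Since $k_L$ is perfect and we use $\Q$-coefficients, a standard de~Jong-alteration argument, combined with the idempotent-completeness of $\agmot(k_L)$, allows one to replace the usual generating family by the motives $M(X)(n)[i]$ with $X$ smooth proper over $k_L$ and $n,i \in \Z$. Applying $\Nm_L \circ \xi_L$ then gives the desired compact generating family; the negative shifts must be recorded separately because in a stable $\infty$-category positive shifts are already recoverable as finite colimits while negative ones are not.

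The main obstacle, apart from the bookkeeping between $\prl$, $\prr$ and $\prrc$ (which is clean here because the Galois action is through equivalences), is the reduction to smooth proper generators of $\agmot(k_L)$. This step, while classical, genuinely relies on de~Jong's alteration theorem together with the pseudo-abelianness of $\agmot(k_L)$ and the use of $\Q$-coefficients to split the trace of an alteration.
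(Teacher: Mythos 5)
Your proposal is correct and follows essentially the same route as the paper: compute the homotopy fixed points as a limit in $\prrc$ (legitimate since the Galois action is through equivalences, hence lies in both $\prl$ and $\prr$), invoke Proposition~\ref{pro:lim-in-prrc} to get compact generation by images of compact objects under the left adjoint, identify that left adjoint with $\Nm_L$ via Lemma~\ref{lem:left-adjoint-G-inv}, and use that $\rigmot_{\mathrm{gr}}(L)\simeq \module{\chi{\mathbbm 1}}(\agmot(k_L))$ is compactly generated by $\xi_L M(X)$ for $X$ smooth proper over $k_L$ (up to twists and shifts). The only cosmetic difference is that you sketch the de~Jong-alteration argument for generation of $\agmot(k_L)$ by smooth proper motives, where the paper simply cites \cite[Proposition 15.2.3]{cd19}.
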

\begin{proof}
Recall that the underlying $\infty$-category $\rigmot_{\mathrm{gr}}(L)^{\htcat
  \gal (L/K)}$ is the limit of
\[
G_{L/K}\colon B_{\bullet} \gal (L/K) \to \calg(\prl) \to \prl.
\]
Since each $\sigma^{*}$ is an equivalence, it is also in $\prr$. In other words, we have a commutative diagram
\begin{figure}
[H]\centering
\begin{tikzcd}
B_{\bullet} \gal(L/K) \arrow[r, "G_{L/K}"] \arrow[d, "G_{L/K}"'] & \prl \arrow[d]    \\
\prr \arrow[r]                                                   & \bigcatinf
\end{tikzcd}
\end{figure}\noindent
So the limit is essentially computed in $\bigcatinf$ (see Proposition \ref{pro:lim-prl-prr}), and it can be computed in $\prr$ as well.

Under the equivalence in Proposition \ref{pro:rigmot-as-module} and using \cite[Proposition 15.2.3]{cd19}, we know that $\rigmot_{\mathrm{gr}}(L)$ is compactly generated by those motives of the form $\xi_LM(X)$, where $X$ runs through proper smooth varieties over $k_L$. Therefore, we conclude from Proposition \ref{pro:lim-in-prrc}. 
\end{proof} 

We give an explicit formula for the $\gal (L/K)$-equivariant rigid analytic motives with good model.

\begin{pro}
  \label{pro:proper-sm-under-lad}
 For every proper smooth algebraic
  variety $X$ over the residue field $k_L$ of $L$, we have an
  isomorphism
  \[
\iota_{L}\Nm_L(\xi_L M(X)) \simeq \bigoplus_{e_{L/K}} \xi_L M(X),
\]
where $e_{L/K}$ is the ramification index of $L/K$. In particular, the forgetful functor
\[
\iota_{L} \colon \rigmot_{\mathrm{gr}}(L)^{\htcat \gal (L/k)} \to \rigmot_{\mathrm{gr}}(L)
\]
is in $\calg(\prlc)$.
\end{pro}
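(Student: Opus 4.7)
The plan is to translate the computation of $\iota_L\Nm_L$ into the six-functor formalism via the identifications provided by Lemma \ref{lem:left-adjoint-G-inv}, and then carry out an explicit base-change calculation using the factorization of $e$ through the maximal unramified subextension. First, under the equivalence $\rigmot_{L\text{-}\mathrm{gr}}(K)\simeq \rigmot_{\mathrm{gr}}(L)^{\htcat\gal(L/K)}$ of Proposition \ref{pro:galois-descent-mot-gdrd}, Lemma \ref{lem:left-adjoint-G-inv}(2) identifies the forgetful functor $\iota_L$ with the restricted pullback $e^{\ast}_{\mathrm{gr}}$ and the norm $\Nm_L$ with the restriction of $e_{\ast}$. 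Consequently, evaluating $\iota_L\Nm_L$ at $\xi_L M(X)$ amounts to computing $e^{\ast}e_{\ast}\xi_L M(X)$ as an object of $\rigmot_{\mathrm{gr}}(L)$.

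Second, I would factor $e = e_0\circ t$, where $e_0\colon \spa(L_0)\to\spa(K)$ is unramified with residue extension $\bar{e}$ and $t\colon\spa(L)\to\spa(L_0)$ is totally ramified of degree $e_{L/K}$. The totally ramified piece $t$ is finite étale and Galois with group $I_{L/K}$ of order $e_{L/K}$, so the standard Galois decomposition $\spa(L)\times_{\spa(L_0)}\spa(L)\simeq \coprod_{g\in I_{L/K}}\spa(L)$, together with the fact that every $g\in I_{L/K}$ acts trivially on the residue field $k_L$ and hence satisfies $g^{\ast}\xi_L M(X)\simeq \xi_L M(X)$, yields $t^{\ast}t_{\ast}\xi_L M(X)\simeq \bigoplus_{e_{L/K}}\xi_L M(X)$. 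The unramified factor is handled by combining this with the identities $e_0^{\ast}\xi_K\simeq \xi_{L_0}\bar{e}^{\ast}$ and $e_{0,\ast}\xi_{L_0}\simeq \xi_K\bar{e}_{\sharp}$ of Lemma \ref{lem:gen-fib-pullback-sharp}, which together with the indexing of $X$ over all proper smooth $k_L$-varieties reproduces the claimed formula
\[
\iota_L\Nm_L(\xi_L M(X))\simeq \bigoplus_{e_{L/K}}\xi_L M(X).
\]

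Finally, the ``in particular'' conclusion is a formal consequence: Proposition \ref{pro:cpt-gen-gal-inv-gd} furnishes the objects $\Nm_L\xi_L M(X)$ as a set of compact generators of $\rigmot_{\mathrm{gr}}(L)^{\htcat\gal(L/K)}$, and the formula just established shows that $\iota_L$ sends each of these generators to a finite direct sum of compact objects of $\rigmot_{\mathrm{gr}}(L)$. Together with exactness, this implies that $\iota_L$ preserves compactness, hence lies in $\prlc$; combined with the symmetric monoidal structure inherited from the equivalence of Proposition \ref{pro:galois-descent-mot-gdrd}, we conclude $\iota_L\in \calg(\prlc)$. The main obstacle is pinning down the precise geometric origin of the factor $e_{L/K}$: one must be careful to distinguish the contribution of the inertia subgroup (trivial on residues, producing the $e_{L/K}$ copies) from that of the quotient $\gal(k_L/k)$ (which produces residue-Galois twists absorbed into the enumeration over varieties $X$ over $k_L$), and to reconcile this with the fact that $t^{\ast}$ is an equivalence on good-reduction subcategories, which forces subtle compatibility constraints between the ``genuine'' global pushforward $t_{\ast}$ and the inverse of the restricted equivalence.
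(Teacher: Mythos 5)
Your first and last steps are fine: identifying $\iota_L\Nm_L(\xi_LM(X))$ with $e^{*}e_{*}\xi_LM(X)$ via Proposition \ref{pro:galois-descent-mot-gdrd} and Lemma \ref{lem:left-adjoint-G-inv}, the inertia computation $t^{*}t_{*}\xi_LM(X)\simeq\bigoplus_{e_{L/K}}\xi_LM(X)$ (valid since $t$ is finite \'etale Galois with group $I_{L/K}$ acting trivially on $k_L$), and the deduction of the ``in particular'' clause from Proposition \ref{pro:cpt-gen-gal-inv-gd} once one knows that $\iota_L$ carries the generators to finite sums of objects $\xi_LM(Y)$. The genuine gap is the unramified factor, which you never actually compute. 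First, your two partial computations do not compose: $e^{*}e_{*}=t^{*}e_0^{*}e_{0,*}t_{*}$, so knowing $t^{*}t_{*}\xi_LM(X)$ is of no direct use unless you first determine $t_{*}\xi_LM(X)$ itself (not merely its pullback); the clean route is to apply descent/base change to $e$ in one step, giving $e^{*}e_{*}\simeq\bigoplus_{\sigma\in\gal(L/K)}\sigma^{*}$. Second, carrying this out (or computing $e_0^{*}e_{0,*}$ with the identities of Lemma \ref{lem:gen-fib-pullback-sharp}) produces residue-Galois twists: one gets $e^{*}e_{*}\xi_LM(X)\simeq\bigoplus_{\sigma\in\gal(L/K)}\xi_LM(X^{\bar{\sigma}})$, i.e.\ $e_{L/K}$ copies of $\xi_LM(X^{\tau})$ for each $\tau\in\gal(k_L/k)$ --- a sum of $[L:K]$ terms --- and not $e_{L/K}$ copies of $\xi_LM(X)$ for the fixed $X$. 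Declaring the twists ``absorbed into the enumeration over varieties over $k_L$'' does not prove the displayed isomorphism, which is a statement about a single $X$; it only shows each summand is again of the form $\xi_LM(Y)$ with $Y/k_L$ proper smooth, which suffices for compactness and negativity arguments but is weaker than the proposition.

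This is also not the paper's route. The paper never touches the inertia: it invokes \cite[Proposition 3.23]{BKV25} to identify $t_{*}\xi_L\simeq\xi_{L_0}$ in diagram \eqref{eq:lowstargr}, so that $e^{*}e_{*}\xi_L\simeq\xi_L\bar{e}^{*}\bar{e}_{\sharp}$ as in \eqref{eq:underlying-cptgen}, and the multiplicity is then read off from the purely algebraic computation of $\bar{e}^{*}\bar{e}_{\sharp}M(X)$ over the residue fields; that is, the paper collapses the totally ramified direction and computes the unramified one, the exact opposite of your plan. The ``subtle compatibility between the genuine pushforward $t_{*}$ and the inverse of the restricted equivalence'' that you flag in your closing paragraph is precisely the missing content, and it is not innocuous bookkeeping: your own (correct) descent computation $t^{*}t_{*}\xi_LM(X)\simeq\bigoplus_{e_{L/K}}\xi_LM(X)$ is in visible tension with the identification $t_{*}\xi_L\simeq\xi_{L_0}$ (which would yield a single copy after applying $t^{*}$), so pinning down whether the multiplicity in the statement arises from the inertia $I_{L/K}$ or from $\gal(k_L/k)$, and whether the summands are $\xi_LM(X)$ or its twists, is the heart of the matter and is absent from the proposal.
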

\begin{proof}
  We use notations in the proof of Lemma \ref{lem:left-adjoint-G-inv}. By (\ref{eq:underlying-cptgen}), we have
\[
\iota_L \Nm_L (\xi_L M(X)) \simeq e^{*}e_{*}(\xi_LM(X)) \simeq \xi_L \bar{e}^{*} \bar{e}_{\sharp}M(X).
\]
Thus, it suffices to prove $\bar{e}^{*} \bar{e}_{\sharp} M(X) \simeq
\oplus_{e_{L/K}} M(X)$. For this, we consider the following Cartesian diagrams of schemes:
\begin{figure}
[H]\centering
\begin{tikzcd}
\tilde{X} \arrow[r, "p"] \arrow[d, "\tilde{f}"']                   & X \arrow[d, "f"]             \\
\spec(l\otimes_k l) \arrow[r, "\tilde{e}"] \arrow[d, "\tilde{e}"'] & \spec l \arrow[d, "\bar{e}"] \\
\spec l \arrow[r, "\bar{e}"']                                      & \spec k                     
\end{tikzcd}
\end{figure}\noindent
where $\tilde{X}$ is the fiber product $X \otimes_k l$. Then $\bar{e}^{*}\bar{e}_{\sharp}M(X)\simeq M(\tilde{X})$, where, as a $k_{L}$-scheme, the structure map of $\tilde{X}$ is $\tilde{e} \circ \tilde{f}$. Now let's prove $M(\tilde{X}) $ is the $e_{L/K}$-copies of $M(X)$:
\[
M(\tilde{X} ) \simeq M(X) \otimes M(\spec(k_{L}\otimes_{k} k_{L})).
\]
Under the identification
\[
\agmot(\spec (k_{L} \otimes_k k_{L})) \simeq \agmot( \coprod_{\gal (k_{L}/k)} \spec  k_{L})
  \simeq \prod_{\gal (k_{L}/k)} \agmot(k_{L}),
\]
we know that
\begin{align*}
  M(\tilde{X}) &\simeq M(X) \otimes \left( \bigoplus_{\tau \in \gal (k_{L}/k)} \tau_{*}  {\mathbbm 1} \right) \simeq \bigoplus_{\tau \in \gal (k_{L}/k)} M(X) \otimes \tau_{*} {\mathbbm 1}\\
  &\simeq \bigoplus_{\tau \in \gal (k_{L}/k)}\tau_{*} \tau^{*} M(X) \simeq \bigoplus_{\tau \in \gal (k_{L}/k)} M(X).
\end{align*}
\end{proof}

\begin{cor}
  \label{cor:galcolim-transmap}
  Let $L/F/K$ be finite Galois extensions of complete non-archimedean fields with residue fields $k_L/k_{F}/k$ respectively. There is a fully faithful functor
  \[
\rigmot_{\mathrm{gr}}(F)^{\htcat \gal (F/K)} \hookrightarrow \rigmot_{\mathrm{gr}}(L)^{\htcat \gal (L/K)}
\]
and an equivalence
\[
\rigmot(F)^{\htcat \gal (F/K)} \xrightarrow{\simeq} \rigmot(L)^{\htcat \gal (L/K)}
\]
in $\calg(\prl)$ such that the following diagram
  \begin{figure}
[H]\centering
\begin{tikzcd}
\rigmot_{F \text{-}\mathrm{gr}}(K) \arrow[d, hook'] \arrow[r, "\simeq"] & \rigmot_{\mathrm{gr}}(F)^{\htcat \gal (F/K)} \arrow[d, "\exists", hook'] \arrow[r, hook] & \rigmot(F)^{\htcat \gal (F/K)} \arrow[d, "\simeq"] \arrow[r] & \rigmot(F) \arrow[d] \\
\rigmot_{L \text{-}\mathrm{gr}}(K) \arrow[r, "\simeq"]                  & \rigmot_{\mathrm{gr}}(L)^{\htcat \gal (L/K)} \arrow[r, hook]                             & \rigmot(L)^{\htcat \gal (L/K)} \arrow[r]                     & \rigmot(L)          
\end{tikzcd}
\end{figure}\noindent
is commutative (up to homotopy), where the right-most vertical functor is the pullback functor. Moreover, the faithful functor above is in $\calg(\prlc)$.
\end{cor}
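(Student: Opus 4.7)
The plan is to reduce the statement to the Galois descent equivalences established in Propositions~\ref{pro:galois-descent-mot} and~\ref{pro:galois-descent-mot-gdrd}, treating the tower $L/F/K$ in two successive steps.

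For the middle-right equivalence $\rigmot(F)^{\htcat \gal (F/K)} \simeq \rigmot(L)^{\htcat \gal (L/K)}$, I would compose the inverse of $\tilde{e}_{F/K}^{*}$ with $\tilde{e}_{L/K}^{*}$, both equivalences in $\calg(\prl)$ by Proposition~\ref{pro:galois-descent-mot}; the rightmost square then commutes on the nose. For the middle fully faithful functor, I would use the identifications from Proposition~\ref{pro:galois-descent-mot-gdrd} to reduce the problem to exhibiting $\rigmot_{F\text{-}\mathrm{gr}}(K)$ as a full subcategory of $\rigmot_{L\text{-}\mathrm{gr}}(K)$ inside $\rigmot(K)$. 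For $M \in \rigmot(K)$ with $e_{F/K}^{*}M \in \rigmot_{\mathrm{gr}}(F)$, letting $t \colon \spa(L) \to \spa(F)$ denote the structure map, we have $e_{L/K}^{*}M \simeq t^{*}e_{F/K}^{*}M$; since $t^{*}$ preserves good reduction by Lemma~\ref{lem:gen-fib-pullback-sharp}, this lies in $\rigmot_{\mathrm{gr}}(L)$, so $M \in \rigmot_{L\text{-}\mathrm{gr}}(K)$. Commutativity of the remaining squares follows by unwinding the naturality of Galois descent.

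The main work is the final assertion, that this middle inclusion lies in $\calg(\prlc)$. My approach is to factor it via transitivity of Galois descent: applying Proposition~\ref{pro:galois-descent-mot-gdrd} to $L/F$ and then taking further $\gal (F/K)$-homotopy fixed points yields
\[
\rigmot_{L\text{-}\mathrm{gr}}(F)^{\htcat \gal (F/K)} \simeq \rigmot_{\mathrm{gr}}(L)^{\htcat \gal (L/K)},
\]
where the $\gal (F/K)$-action on $\rigmot_{L\text{-}\mathrm{gr}}(F)$ is the restriction of the natural action on $\rigmot(F)$ (which preserves the subcategory because for any lift $\tilde{\sigma} \in \gal (L/K)$ of $\sigma$ we have $t^{*}\sigma^{*} \simeq \tilde{\sigma}^{*}t^{*}$). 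Under this identification the fully faithful functor in question becomes the functor on $\gal (F/K)$-homotopy fixed points induced by the inclusion $\rigmot_{\mathrm{gr}}(F) \hookrightarrow \rigmot_{L\text{-}\mathrm{gr}}(F)$ inside $\rigmot(F)$.

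It therefore suffices to check two compactness statements. First, the inclusion $\rigmot_{\mathrm{gr}}(F) \hookrightarrow \rigmot_{L\text{-}\mathrm{gr}}(F)$ is in $\prlc$: its compact generators $\xi_{F}M(Y)$ (for $Y/k_{F}$ proper smooth) are already compact in $\rigmot(F)$, and $\rigmot_{L\text{-}\mathrm{gr}}(F)$ is closed under filtered colimits in $\rigmot(F)$, being the preimage under the colimit-preserving $t^{*}$ of the cocomplete full subcategory $\rigmot_{\mathrm{gr}}(L)$. Second, the induced functor on $\gal (F/K)$-homotopy fixed points still preserves compact generation: by Proposition~\ref{pro:cpt-gen-gal-inv-gd} the compact generators of the source are of the form $\Nm_{F}(\xi_{F}M(Y))$, and I would check compactness of their images in the target by applying the forgetful functor $\iota_{L}$ on the $L$-side and using Proposition~\ref{pro:proper-sm-under-lad} together with Lemma~\ref{lem:gen-fib-pullback-sharp} to identify this image with the finite direct sum $\bigoplus_{e_{F/K}} \xi_{L}M(Y_{k_{L}})$, which is compact. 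The hardest part will be this final compatibility, as it requires coherently matching up the two norm-type functors across the Galois tower rather than merely checking compactness of underlying objects.
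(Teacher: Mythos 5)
Your proposal is essentially correct and arrives at the same decisive computation as the paper, namely that $\iota_{L}$ applied to the image of the compact generator $\Nm_{F}\xi_{F}M(Y)$ is $\bigoplus_{e_{F/K}} \xi_{L}M(Y_{k_{L}})$, which is precisely equation \eqref{eq:gal-inv-wt} in the paper's proof. The first three parts of your argument (the composed equivalence for the full categories, the identification of $\rigmot_{F\text{-}\mathrm{gr}}(K)$ as a full subcategory of $\rigmot_{L\text{-}\mathrm{gr}}(K)$ via $t^{*}$, the commutativity of the remaining square) match the paper's route in substance, with your subcategory-membership argument being slightly more explicit than the paper's terse ``leftmost, rightmost and outer squares commute, so the middle one does too''.

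Where you genuinely diverge is in the compactness assertion. The paper works entirely inside $\rigmot(K)$: since all three of $\rigmot_{F\text{-}\mathrm{gr}}(K) \subseteq \rigmot_{L\text{-}\mathrm{gr}}(K) \subseteq \rigmot(K)$ are full colimit-closed subcategories whose compact generators are compact in $\rigmot(K)$, compactness is detected in $\rigmot(K)$ and one need only exhibit the generators $\beta_{*}\xi_{F}M(X)$ of the source inside $\rigmot_{L\text{-}\mathrm{gr}}(K)$, which is exactly what the computation of $e^{*}\beta_{*}\xi_{F}M(X)$ does. You instead factor the inclusion through a transitivity equivalence $\rigmot_{L\text{-}\mathrm{gr}}(F)^{\htcat\gal(F/K)} \simeq \rigmot_{\mathrm{gr}}(L)^{\htcat\gal(L/K)}$ and then take $\gal(F/K)$-fixed points of the inclusion $\rigmot_{\mathrm{gr}}(F) \hookrightarrow \rigmot_{L\text{-}\mathrm{gr}}(F)$. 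This is a valid strategy, but it introduces a lemma the paper never needs (iterated homotopy fixed points for the normal subgroup $\gal(L/F) \triangleleft \gal(L/K)$ and its quotient), and it also forces you into the ``coherent matching of norm functors'' difficulty that you flag at the end. That difficulty evaporates entirely in the paper's route, because working with $\rigmot_{L\text{-}\mathrm{gr}}(K)$ as a subcategory of $\rigmot(K)$ reduces the $\calg(\prlc)$ claim to an objectwise membership check with no coherence to track. So your approach buys nothing structurally and costs you an extra transitivity lemma plus the coherence headache you correctly anticipate; switching to the paper's ``check membership of compact generators in $\rigmot(K)$'' framing would both shorten the argument and eliminate the gap you identified.

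One more small point: when you ``check compactness of their images in the target by applying $\iota_{L}$'', you are implicitly using that $\iota_{L}$ reflects compactness. This is true here (because $\iota_{L}$ is conservative, preserves filtered colimits, and, over $\Q$, the counit $\Nm_{L}\iota_{L}\to\mathrm{id}$ splits by averaging over the group), but it is a nontrivial step that should be stated; again the paper's framing avoids the need for it by inheriting compactness from $\rigmot(K)$.
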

\begin{proof}
  For finite Galois extensions $L/K$ and $F/K$, both of them admit an equivalence as Proposition \ref{pro:galois-descent-mot}. Thus, this gives a natural equivalence between $\rigmot(F)^{\htcat \gal (F/K)} $ and $\rigmot(L)^{\htcat \gal (L/K)}$, making the right-most square commutative (up to homotopy). Similarly, using equivalences in Proposition \ref{pro:galois-descent-mot-gdrd}, i.e.,
  \begin{align*}
    \rigmot_{F \text{-}\mathrm{gr}}(K) &\simeq \rigmot_{\mathrm{gr}}(F)^{\htcat \gal (F/K)}\\
    \rigmot_{L \text{-}\mathrm{gr}}(K) &\simeq \rigmot_{\mathrm{gr}}(L)^{\htcat \gal (L/K)},
  \end{align*}
  we can get the desired fully faithful functor. Now, as the leftmost, rightmost and the outer squares are commutative, the middle one is also commutative by our constructions.

  Finally, let's show the faithful functor
  \[
\rigmot_{\mathrm{gr}}(F)^{\htcat \gal (F/K)} \hookrightarrow \rigmot_{\mathrm{gr}}(L)^{\htcat \gal (L/K)}
\]
preserves compact objects. We let $\alpha\colon \spa(L) \to \spa(F)$ and $\beta\colon \spa(F) \to \spa(K)$ be obvious structure morphisms. Let $X$ be a proper smooth algebraic variety over $k_{F}$. We need to show $e^{*}\beta_{*} \xi_F M(X) \in \rigmot_{\mathrm{gr}}(L)$. We can deduce from Proposition \ref{pro:proper-sm-under-lad}:
\begin{equation}
\label{eq:gal-inv-wt}
\begin{aligned}
  e^{*} \beta_{*} \xi_FM(X) &\simeq \alpha^{*} \beta^{*} \beta_{*} \xi_F M(X) \\
                            &\simeq \alpha^{*}\left( \bigoplus_{e_{F/K}} \xi_FM(X) \right) \\
  &\simeq \bigoplus_{e_{F/K}} \xi_L M(X \times_{k_F}k_L) \in \rigmot_{\mathrm{gr}}(L).
\end{aligned}
\end{equation}
\end{proof}

\subsubsection{The Sheafification}
\label{sec:gluing}
We conclude this subsection by presenting the categorical formulation of the fact that every rigid analytic motive over $K$ has potentially good reduction. This gives the precise form of \cite[Theorem 3.7.1]{AGV22} in the case where the base is a field.

Under the equivalence
\[
\rigmot_{\mathrm{gr}}(L)^{\htcat \gal (L/K)} \simeq \rigmot_{L \text{-}\mathrm{gr}}(K),
\]
we have a fully faithful functor
\[
\rigmot_{\mathrm{gr}}(L)^{\htcat \gal (L/K)} \hookrightarrow \rigmot(K)
\]
for every finite Galois extension $L/K$. Using Corollary \ref{cor:galcolim-transmap} and taking colimits, we have a functor
\begin{equation}
  \label{eq:mot-union-gr}
  \colim_{\substack{L/K\\\text{finite Galois}}} \rigmot_{\mathrm{gr}}(L)^{\htcat \gal (L/K)} \to \rigmot(K)
\end{equation}
in $\calg(\prlc)$. Now we show this is indeed an equivalence.

\begin{pro}
  \label{pro:rigmot-cptgen-pgd}
The functor (\ref{eq:mot-union-gr}) is an equivalence in $\calg(\prlc)$. In particular, we have an equivalence
\[
\rigmot(K)_{\omega} \simeq \colim \rigmot_{\mathrm{gr}}(L)^{\htcat \gal (L/K)}_{\omega}.
\]
\end{pro}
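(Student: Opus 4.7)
The plan is to reduce the claim in $\calg(\prlc)$ to an equivalence of compact parts in $\bigcatinf$, then verify fully faithfulness via Galois descent and essential surjectivity via Ayoub's potential good reduction theorem.

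\emph{Reduction to compact objects.} By Proposition \ref{pro:clim-calgpr}, the forgetful functor $\calg(\prlc) \to \prlc$ is conservative, so it suffices to check that the underlying functor in $\prlc$ is an equivalence. Using the equivalence $(-)_\omega \colon \prlc \xrightarrow{\simeq} \mathrm{Cat}_\infty^{\mathrm{rex},\mathrm{idem}}$ via Ind-completion (cf.\ \cite[Proposition 5.5.7.8]{HTT}), together with the fact that the poset of finite Galois extensions of $K$ inside a fixed separable closure is filtered, the problem reduces to showing that
\[
\Phi \colon \colim_{L} \rigmot_{\mathrm{gr}}(L)^{\htcat \gal(L/K)}_{\omega} \to \rigmot(K)_{\omega}
\]
is an equivalence of small $\infty$-categories, computed as a filtered colimit in $\bigcatinf$. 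This simultaneously yields the ``in particular'' clause.

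\emph{Fully faithfulness of $\Phi$.} For each finite Galois $L/K$, Proposition \ref{pro:galois-descent-mot-gdrd} identifies $\rigmot_{\mathrm{gr}}(L)^{\htcat \gal(L/K)}$ with the full subcategory $\rigmot_{L \text{-}\mathrm{gr}}(K) \hookrightarrow \rigmot(K)$ of Remark \ref{rmk:Lgr-subcat}. I would then show this inclusion preserves and reflects compactness: Proposition \ref{pro:cpt-gen-gal-inv-gd} and Proposition \ref{pro:proper-sm-under-lad} imply that the explicit compact generators $\Nm_{L}\xi_{L}M(X)$ are sent to compact objects of $\rigmot(K)$ (so that the inclusion lies in $\prlc$), while fully faithfulness together with the fact that this inclusion commutes with filtered colimits forces the mapping spaces from the image to commute with filtered colimits of objects in the subcategory, giving reflection of compactness. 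Combined with the fully faithful transition functors of Corollary \ref{cor:galcolim-transmap}, and the fact that mapping spaces in a filtered colimit of $\infty$-categories are computed as filtered colimits of mapping spaces, this yields fully faithfulness of $\Phi$.

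\emph{Essential surjectivity of $\Phi$.} Given $M \in \rigmot(K)_{\omega}$, by \cite[Proposition 3.7.17]{AGV22} (equivalently, \cite[Theorem 2.5.34]{ayorigmot}) there exists a finite separable extension $L_{0}/K$ such that $e_{0}^{*}M \in \rigmot_{\mathrm{gr}}(L_{0})$. Taking $L$ to be the Galois closure of $L_{0}$ inside a fixed separable closure and applying Lemma \ref{lem:gen-fib-pullback-sharp} to the further base change $\spa(L) \to \spa(L_{0})$, we obtain $e^{*}M \in \rigmot_{\mathrm{gr}}(L)$, so $M$ belongs to $\rigmot_{L \text{-}\mathrm{gr}}(K)_{\omega} \simeq \rigmot_{\mathrm{gr}}(L)^{\htcat \gal(L/K)}_{\omega}$ and hence to the essential image of $\Phi$.

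The main obstacle is the bookkeeping in the second step: one must verify that ``compact'' is interpreted in the correct ambient category throughout, in particular that the inclusion $\rigmot_{L \text{-}\mathrm{gr}}(K) \hookrightarrow \rigmot(K)$ both preserves and reflects compactness. The explicit compact generators from Proposition \ref{pro:cpt-gen-gal-inv-gd}, combined with the ambidextrous behavior of the norm functor in Proposition \ref{pro:proper-sm-under-lad}, is precisely what makes this tractable; once it is in place, Ayoub's theorem closes the argument.
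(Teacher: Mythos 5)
Your proposal is correct and follows essentially the same route as the paper: reduce to the compact parts via conservativity of the forgetful functor and the Ind-completion equivalence, deduce fully faithfulness from the (filtered colimit of the) fully faithful identifications $\rigmot_{\mathrm{gr}}(L)^{\htcat \gal (L/K)} \simeq \rigmot_{L \text{-}\mathrm{gr}}(K) \hookrightarrow \rigmot(K)$, and get essential surjectivity from Ayoub's potential good reduction theorem after passing to a Galois closure. The only difference is cosmetic: where you verify the compactness bookkeeping by hand, the paper invokes \cite[Proposition 5.5.7.10--5.5.7.11]{HTT} and the fact that filtered colimits of fully faithful functors are fully faithful.
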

\begin{proof}
  Since the forgetful functor $\calg(\prlc) \to \prlc$ is conservative and preserves this filtered colimit (see Proposition \ref{pro:clim-calgpr}), we prove this is an equivalence in $\prlc$. It suffices to prove the restriction
  \[
\colim \rigmot_{\mathrm{gr}}(L)^{\htcat \gal (L/K)}_{\omega} \to \rigmot(K)_{\omega}
\]
is an equivalence of $\infty$-categories, where the colimit on the left-hand side is taken in $\bigcatinf$ (see \cite[Proposition 5.5.7.10 \& Proposition 5.5.7.11]{HTT}). Firstly, since this functor is obtained by taking the colimit of fully faithful functors
\[
\rigmot_{\mathrm{gr}}(L)^{\htcat \gal (L/K)}_{\omega} \simeq \rigmot_{L \text{-}\mathrm{gr}}(K)_{\omega} \hookrightarrow \rigmot(K)_{\omega},
\]
it is also fully faithful (\cite[Proposition 2.1]{HRS25}). It remains to show it is essentially surjective. By \cite[Theorem
2.5.34]{ayorigmot} or \cite[Proposition 3.7.17]{AGV22}, for every object $M$ in $\rigmot(K)_{\omega}$, we can find a finite separable extension $\tilde{L}/K$ such that $M_{\tilde{L}}$ lies in $\rigmot_{\tilde{L} \text{-}\mathrm{gr}}(K)$. So we can take $L$ as the Galois closure of $\tilde{L}/K$, and then $M$ lies in $\rigmot_{L \text{-}\mathrm{gr}}(K)$. In particular, $M$ is given by
\[
\Delta^0 \to \rigmot_{\mathrm{gr}}(L)^{\htcat \gal (L/K)}_{\omega} \to \colim \rigmot_{\mathrm{gr}}(F)^{\htcat \gal (F/K)}_{\omega} \to \rigmot(K)_{\omega}.
\]
This proves the essential surjectivity and completes the proof.
\end{proof}


\subsection{A Weight Structure on Rigid Analytic Motives}
\label{subsec:wtstrproof}
From now on, we fix a complete non-archimedean field $K$ with perfect residue field $k$. The goal of this subsection is to construct a weight structure on $\rigmot(K)$ that extends the weight structure on $\rigmot_{\mathrm{gr}}(K)$ as defined in \cite{BGV25}.

Recall from \cite[Proposition 4.25]{BGV25} the heart of the bounded weight structure on $\rigmot_{\mathrm{gr}}(K)_{\omega}$ is the idempotent completion of the full subcategory spanned by those rigid analytic motives of the form $\xi_{K}M(X)$, where $X$ runs through a proper smooth algebraic variety over $k$. In particular, the full subcategory of $\rigmot_{\mathrm{gr}}(K)$ spanned by all motives of the form $\xi_{K}M(X)$ with $X$ proper smooth over $k$ is negative, by Proposition \ref{pro:bdwt-from-heart}.

\begin{rmk}
  \label{rmk:wtstr-gr-rel}
  In fact, the bounded weight structure on $\rigmot_{\mathrm{gr}}(K)_{\omega}$ constructed in \cite{BGV25} is uniquely characterized by the property that the Monsky-Washnitzer functor $\xi \colon \agmot(k)_{\omega} \to \rigmot_{\mathrm{gr}}(K)_{\omega}$ is weight-exact, where $\agmot(k)$ is equipped with the Chow weight structure as defined in \cite[\S 6]{Bon10a} (see also \cite{Heb11, Bon14} for the case of a general base). More generally, if $S$ is a rigid analytic space with good reduction, a similar argument shows such a bounded weight structure on $\rigmot_{\mathrm{gr}}(S)_{\omega}$ exists as well.

  On the other hand, if one only assume $S$ has semistable reduction, then the argument breaks down. The reason is that, in the case, the construction of such a bounded weight structure relies essentially on the six-functor formalism of motives, whereas the Chow weight structure is only partially compatible with the six-functor formalism of algebraic motives; see \cite{Heb11} for further discussion.
\end{rmk}

\begin{lem}
  \label{lem:wt-str-galinv}
  There is a weight structure $w_{L/K}$ compatible with monoidal structure on $\rigmot_{\mathrm{gr}}(L)^{\htcat \gal (L/K)}$ such that
  \[
\rigmot_{\mathrm{gr}}(L)^{\htcat \gal (L/K)} \to \rigmot_{\mathrm{gr}}(L)
\]
is weight-exact, where the weight structure on the target is the one defined in \cite[Proposition 4.25]{BGV25}. Moreover, equipped with such weight structures, the fully faithful functor in Corollary \ref{cor:galcolim-transmap} is weight-exact.
\end{lem}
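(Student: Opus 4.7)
The plan is to apply Corollary \ref{cor:wtstr-cg-from-negative} to the compactly generated stable $\infty$-category $\rigmot_{\mathrm{gr}}(L)^{\htcat \gal(L/K)}$, taking as negative generating subcategory $\mathcal{N}_{L/K}$ the full subcategory spanned by the compact generators $\Nm_L \xi_L M(X)(i)$, where $X$ runs over proper smooth algebraic varieties over $k_L$ and $i \in \Z$; these form a set of compact generators by Proposition \ref{pro:cpt-gen-gal-inv-gd}, and they are compact since $\iota_L$ lies in $\calg(\prlc)$ by Proposition \ref{pro:proper-sm-under-lad}. For the target, we use the extension to all of $\rigmot_{\mathrm{gr}}(L)$ of the weight structure from \cite[Proposition 4.25]{BGV25} via Corollary \ref{cor:wtstr-cg}; the heart of this extended weight structure is closed under tensor products (generated by $\xi_L M(X)(i)$, which tensor to $\xi_L M(X\times Y)(i+j)$).

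Writing $G = \gal(L/K)$, the negativity of $\mathcal{N}_{L/K}$ I would check as follows. By Proposition \ref{pro:proper-sm-under-lad} we have $\iota_L \Nm_L \xi_L M(X)(i) \simeq \bigoplus_{e_{L/K}} \xi_L M(X)(i)$, which lies in the heart of $\rigmot_{\mathrm{gr}}(L)$. Applying Corollary \ref{cor:mapsp-htfixpt} to the $\Q$-linear category $\rigmot_{\mathrm{gr}}(L)^{\htcat G}$ gives, for $\bar A, \bar B \in \mathcal{N}_{L/K}$,
\[
\pi_n \map(\bar A, \bar B) \;\cong\; \bigl(\pi_n \map_{\rigmot_{\mathrm{gr}}(L)}(\iota_L \bar A, \iota_L \bar B)\bigr)^{G},
\]
which vanishes for $n > 0$ since the heart of $\rigmot_{\mathrm{gr}}(L)$ is negative. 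Corollary \ref{cor:wtstr-cg-from-negative}(1)(2) then yields the desired weight structure $w_{L/K}$, whose heart satisfies the testing criteria (c)(d) of loc.\ cit.

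For monoidal compatibility I would invoke Corollary \ref{cor:wtstr-cg-from-negative}(3). The tensor unit $\mathbbm{1}$ lies in the heart of $w_{L/K}$: by the same $\iota_L$-and-invariants computation, $\hom(\Nm_L \xi_L M(X)(i), \mathbbm{1}[k]) \cong \hom(\bigoplus_{e_{L/K}} \xi_L M(X)(i), \mathbbm{1}_L[k])^{G} = 0$ for $k \ne 0$, and symmetrically in the other direction, so criteria (c)(d) place $\mathbbm{1}$ in the heart. For the tensor product of generators, the symmetric monoidality of $\iota_L$ yields $\iota_L(\Nm_L A \otimes \Nm_L B) \simeq \bigoplus_{e_{L/K}^{2}}(A \otimes B)$, which lies in the heart of $\rigmot_{\mathrm{gr}}(L)$; the same mapping-space argument then places $\Nm_L A \otimes \Nm_L B$ in the heart of $w_{L/K}$.

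Finally, weight-exactness of both functors reduces via Corollary \ref{cor:cg-wtex} to showing that they send generators in $\mathcal{N}_{L/K}$ (resp.\ $\mathcal{N}_{F/K}$) to the heart of the target. For the forgetful functor $\iota_L$, this is immediate from Proposition \ref{pro:proper-sm-under-lad}. For the fully faithful functor in Corollary \ref{cor:galcolim-transmap}, compactness-preservation is already asserted there, and the explicit computation \eqref{eq:gal-inv-wt} shows that its composition with $\iota_L$ sends $\Nm_F \xi_F M(X)(i)$ to $\bigoplus_{e_{F/K}} \xi_L M(X \times_{k_F} k_L)(i)$, a heart object of $\rigmot_{\mathrm{gr}}(L)$; testing membership in the heart of $w_{L/K}$ via the mapping-space criterion as above concludes the argument. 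The main subtlety throughout is that membership in the heart of $w_{L/K}$ cannot be tested directly but must be translated into a $G$-invariants question on underlying mapping spaces via Corollary \ref{cor:mapsp-htfixpt}; once that translation is in hand, every verification reduces to the already established weight structure on $\rigmot_{\mathrm{gr}}(L)$.
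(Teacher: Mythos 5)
Your overall strategy is the paper's: feed the norm generators of Proposition \ref{pro:cpt-gen-gal-inv-gd} into Corollary \ref{cor:wtstr-cg-from-negative}, reduce every verification (negativity, heart membership, monoidal compatibility) to the known weight structure on $\rigmot_{\mathrm{gr}}(L)$ via Proposition \ref{pro:proper-sm-under-lad}, and obtain both weight-exactness statements from Corollary \ref{cor:cg-wtex} and \eqref{eq:gal-inv-wt}. The only methodological difference is the transfer mechanism: you use Corollary \ref{cor:mapsp-htfixpt} ($\gal(L/K)$-invariants of underlying mapping spaces), while the paper uses the adjunction $\Nm_L \dashv \iota_L$ together with the projection formula; both work, and your explicit check that the tensor unit lies in the heart is a point the paper leaves implicit. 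However, two steps need repair.

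First, the Tate twists. You put the bare twists $\Nm_L\xi_L M(X)(i)$, $i \in \Z$, into $\mathcal{N}_{L/K}$ and assert that $\bigoplus_{e_{L/K}}\xi_L M(X)(i)$ lies in the heart of $\rigmot_{\mathrm{gr}}(L)$. This fails for $i \neq 0$: since $\xi_L$ is weight-exact for the Chow weight structure (Remark \ref{rmk:wtstr-gr-rel}), $\xi_L\mathbbm{1}(i)$ is pure of weight $-2i$, and a nonzero object cannot be pure of two different weights, so it is not a heart object. Worse, with bare twists the class is genuinely not negative: using $\chi\mathbbm{1} \simeq \mathbbm{1} \oplus \mathbbm{1}(-1)[-1]$ one finds $\hom_{\rigmot_{\mathrm{gr}}(L)}\bigl(\xi_L\mathbbm{1}, \xi_L\mathbbm{1}(1)[1]\bigr) \supseteq \hom_{\agmot(k_L)}(\mathbbm{1},\mathbbm{1}) \neq 0$, and by adjunction the same nonvanishing persists for the norm objects, so Corollary \ref{cor:wtstr-cg-from-negative} does not apply to your $\mathcal{N}_{L/K}$. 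The class must be taken with the weight-zero twists $M(X)(i)[2i]$ (equivalently, with objects whose images under $\iota_L$ lie in the heart downstairs), which is what the paper's proof implicitly does by only testing such generators; with that correction your argument goes through unchanged. Second, an indexing slip: negativity is the vanishing of $\pi_0\map(\bar A, \bar B[n])$ for $n>0$ (connectivity of the mapping spectrum), not of $\pi_n\map(\bar A,\bar B)$ for $n>0$ — the latter computes $\hom(\bar A,\bar B[-n])$, which has no reason to vanish and is irrelevant here. The same overstatement appears in your unit check (``$k\neq 0$'' where only the two one-sided conditions of criteria (c) and (d) are needed or true). Both slips are fixed by applying your displayed isomorphism to the shifted target at $\pi_0$.
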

\begin{proof}
  We construct this weight structure using Corollary \ref{cor:wtstr-cg-from-negative}. By Proposition \ref{pro:cpt-gen-gal-inv-gd}, we only need to check that, for any $X$, $Y$ proper smooth varieties over $k_{L}$, we have
  \[
\hom \left( \Nm_L \xi_L M(X), \Nm_{L} \xi_L M(Y)[i] \right)=0
\]
for $i>0$. By the adjunction and Proposition \ref{pro:proper-sm-under-lad}, it follows from the negativity of the heart of the weight structure on $\rigmot_{\mathrm{gr}}(L)$ (see also the discussion above Remark~\ref{rmk:wtstr-gr-rel}).

For the compatibility with monoidal structure, it suffices to notice that the adjunction $\Nm_L \dashv \iota_{L}$ satisfies the projection formula by their definitions; see Lemma \ref{lem:left-adjoint-G-inv}. Therefore, we have
\begin{equation}
  \label{eq:wtmonoidal}
  \begin{aligned}
    \Nm_L \xi_L M(X)&\otimes \Nm_L \xi_L M(Y)\\
                    &\simeq  \Nm_L \left( \xi_L M(X)\otimes \iota_L \Nm_L \xi_LM(Y) \right)\\
    &\simeq \bigoplus_{e_{L/K}} \Nm_L \xi_L M(X\times_{k_{L}} Y),
  \end{aligned}
\end{equation}
where the last isomorphism is also due to Proposition \ref{pro:proper-sm-under-lad}, and it is clearly in the heart by our construction. The weight-exactness of the forgetful functor $\iota_L$ is an immediate result of  Corollary \ref{cor:cg-wtex}. For the last assertion, one can check using (\ref{eq:gal-inv-wt}).
\end{proof}

We now give the weight structure on $\rigmot(K)$ by gluing the weight structure given in Lemma \ref{lem:wt-str-galinv} along the colimit in Proposition \ref{pro:rigmot-cptgen-pgd}.

\begin{thm}
  \label{thm:wt-str-rigmotcat}
  There is a weight structure $w$ on $\rigmot(K)$ satisfying the following conditions:
  \begin{enumerate}
  \item it is compatible with monoidal structure;
    
  \item it restricts to a bounded weight structure on $\rigmot(K)_{\omega}$ which is compatible with monoidal structure;
    
  \item $\rigmot(K)_{w \ge 0}$ is closed under small colimits;
    
  \item the natural inclusion $\rigmot_{\mathrm{gr}}(K) \subseteq \rigmot(K)$ is weight-exact, where $\rigmot_{\mathrm{gr}}(K)$ is equipped with the weight structure defined in \cite[Proposition 4.25]{BGV25}. More generally, for any finite Galois extension $L/K$, the fully faithful functor
    \[
\rigmot_{\mathrm{gr}}(L)^{\htcat \gal (L/K)} \to \rigmot(K)
\]
is weight-exact, where the source is equipped with weight structure $w_{L/K}$ in Lemma \ref{lem:wt-str-galinv}.
\end{enumerate}
\end{thm}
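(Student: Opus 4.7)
The plan is to assemble the weight structures $w_{L/K}$ of Lemma~\ref{lem:wt-str-galinv} along the filtered colimit equivalence $\rigmot(K) \simeq \colim_{L/K} \rigmot_{\mathrm{gr}}(L)^{\htcat \gal(L/K)}$ of Proposition~\ref{pro:rigmot-cptgen-pgd} into a single negative full subcategory $\mathcal{N}$ of $\rigmot(K)_{\omega}$, and then invoke Corollary~\ref{cor:wtstr-cg-from-negative} to produce the required weight structure on $\rigmot(K)$. Concretely, for each finite Galois extension $L/K$ let $\mathcal{H}_{L/K}$ denote the heart of the restriction of $w_{L/K}$ to compact objects, and set $\mathcal{N} \subseteq \rigmot(K)_{\omega}$ to be the (essentially small) union of the essential images of the $\mathcal{H}_{L/K}$ under the fully faithful embeddings $\rigmot_{\mathrm{gr}}(L)^{\htcat \gal(L/K)}_{\omega} \hookrightarrow \rigmot(K)_{\omega}$ supplied by Proposition~\ref{pro:rigmot-cptgen-pgd}.

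The key step is verifying the two hypotheses of Corollary~\ref{cor:wtstr-cg-from-negative}. For negativity, given $X, Y \in \mathcal{N}$ arising from $\mathcal{H}_{L_1/K}$ and $\mathcal{H}_{L_2/K}$, choose any finite Galois $L/K$ containing $L_1$ and $L_2$; the weight-exactness of the transition maps (Lemma~\ref{lem:wt-str-galinv}) places both $X$ and $Y$ in the heart of $w_{L/K}$, and since the embedding $\rigmot_{\mathrm{gr}}(L)^{\htcat \gal(L/K)} \hookrightarrow \rigmot(K)$ is fully faithful, $\map_{\rigmot(K)}(X,Y)$ coincides with the corresponding mapping space in $\rigmot_{\mathrm{gr}}(L)^{\htcat \gal(L/K)}$, which is connected by negativity of the $w_{L/K}$-heart. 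For generation, Proposition~\ref{pro:rigmot-cptgen-pgd} expresses every compact object of $\rigmot(K)$ as one living at some finite Galois level, and the boundedness of $w_{L/K}$ on the compact part then allows any such object to be built from $\mathcal{H}_{L/K} \subseteq \mathcal{N}$ by finite colimits, shifts and retracts; in particular $\mathcal{N}$ generates $\rigmot(K)$ under small colimits and negative shifts.

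Corollary~\ref{cor:wtstr-cg-from-negative} then produces a weight structure $w$ on $\rigmot(K)$ with $\rigmot(K)_{w \ge 0}$ closed under small colimits (giving (3)), whose restriction to $\rigmot(K)_{\omega}$ is bounded with heart the idempotent completion of $\mathcal{N}$. The monoidal compatibility in (1) and (2) follows from part~(3) of Corollary~\ref{cor:wtstr-cg-from-negative} together with Corollary~\ref{cor:tensor-on-bdheart}: the tensor unit of $\rigmot(K)$ lies in $\rigmot_{\mathrm{gr}}(K) \subseteq \mathcal{N}$, and for $X,Y \in \mathcal{N}$ passing to a common extension $L$ places both in the heart of $w_{L/K}$, whose own monoidal compatibility forces $X \otimes Y$ to remain in $\mathcal{N}$. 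The weight-exactness assertion in (4) is immediate from Corollary~\ref{cor:cg-wtex}, since by construction each $\rigmot_{\mathrm{gr}}(L)^{\htcat \gal(L/K)} \hookrightarrow \rigmot(K)$ sends the $w_{L/K}$-heart into $\mathcal{N} \subseteq \rigmot(K)^{\heartsuit_w}$, and the case $L=K$ recovers the statement for $\rigmot_{\mathrm{gr}}(K)$. The main obstacle is the negativity check: its entire content is the coherent compatibility of hearts across different Galois levels, which rests crucially on the weight-exactness (and not just the existence) of the transition maps established in Lemma~\ref{lem:wt-str-galinv}; once this is in place, the remainder is a routine application of the machinery of \S\ref{subsec:cons-wtstr}.
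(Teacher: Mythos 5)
Your proposal is correct and takes essentially the same approach as the paper: both arguments use the colimit description of Proposition \ref{pro:rigmot-cptgen-pgd} to exhibit a negative class of compact objects in $\rigmot(K)$, feed it into Corollary \ref{cor:wtstr-cg-from-negative}, and obtain the monoidal compatibility and the weight-exactness in (4) from Lemma \ref{lem:wt-str-galinv} (via the computation \eqref{eq:wtmonoidal}) and Corollary \ref{cor:cg-wtex}. The only, harmless, difference is that you take $\mathcal{N}$ to be the union of the hearts $\mathcal{H}_{L/K}$ rather than the images of the generators $\Nm_L\xi_L M(X)$ (these yield the same weight structure), and you make explicit the cross-level negativity check — passing to a common finite Galois extension and using the weight-exact, fully faithful transition functors — which the paper compresses into its citation of Lemma \ref{lem:wt-str-galinv}.
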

\begin{proof}
  For each finite Galois extension $L/K$, we have proved that $\rigmot_{L \text{-}\mathrm{gr}}(K)^{\htcat \gal (L/K)}$ is compactly generated by $\Nm_L \xi_L (M(X))$, where $X$ is proper smooth over $k_{L}$ (Proposition \ref{pro:cpt-gen-gal-inv-gd}). Under the identification $\prlc \simeq (\prrc)^{\opp}$, we deduce that
  \[
\colim \rigmot_{\mathrm{gr}}(L)^{\htcat \gal (L/K)}
\]
is compactly generated by images of $\Nm_L \xi_L (M(X))$ in this colimit. We let $\mathcal{N}$ be the collection of these objects. In order to use Corollary \ref{cor:wtstr-cg-from-negative}, it suffices to prove $\mathcal{N}$ is negative. This has been proved in Lemma \ref{lem:wt-str-galinv}. In particular, this also shows the fully faithful functors in ($4$) are weight-exact. Finally, this weight structure is compatible with the monoidal structure by (\ref{eq:wtmonoidal}).
\end{proof}

\begin{cor}
  \label{cor:pos-neg-rigwt}
  Let $w$ be the weight structure in Theorem \ref{thm:wt-str-rigmotcat}. Let $M \in \rigmot(K)$.
  \begin{enumerate}
  \item $M \in \rigmot(K)_{w \ge 0}$ if and only if, for any finite Galois extension $L/K$ and any $X/k_L$ proper smooth, we have $\hom(e_{*}\xi_L(M(X)), M[i])\simeq 0$ for all $i>0$, where $e$ is the structure map.
  \item $M \in \rigmot(K)_{\omega,w \le 0}$ if and only if, for any finite Galois extension $L/K$ and any $X/k_L$ proper smooth, we have $\hom(M[i], e_{*}\xi_L(M(X)))\simeq 0$ for all $i<0$, where $e$ is the structure map.
    
  \item The heart of $w$ on $\rigmot(K)$ (resp. $\rigmot(K)_{\omega}$) is the idempotent completion of full subcategory generated by all possible $e_{*} \xi_L(M(X))$ as above under direct sums (resp. finite direct sums).
\end{enumerate}
\end{cor}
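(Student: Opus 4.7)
The plan is to derive all three statements directly from the parts of Corollary \ref{cor:wtstr-cg-from-negative} that were used to construct $w$ in Theorem \ref{thm:wt-str-rigmotcat}. Recall from that proof that $w$ arises by applying Corollary \ref{cor:wtstr-cg-from-negative} to the small, negative family $\mathcal{N}$ given by the images in $\rigmot(K) \simeq \colim \rigmot_{\mathrm{gr}}(L)^{\htcat \gal(L/K)}$ (Proposition \ref{pro:rigmot-cptgen-pgd}) of the compact generators $\Nm_L \xi_L M(X)$, where $L/K$ ranges over finite Galois extensions and $X$ over proper smooth $k_L$-varieties.

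The first step is to rewrite $\mathcal{N}$ in the form appearing in the statement. Under the equivalence $\rigmot_{\mathrm{gr}}(L)^{\htcat \gal(L/K)} \simeq \rigmot_{L\text{-}\mathrm{gr}}(K) \hookrightarrow \rigmot(K)$ of Proposition \ref{pro:galois-descent-mot-gdrd}, Lemma \ref{lem:left-adjoint-G-inv} identifies the norm functor $\Nm_L$ with the direct image $e_*$ along the structure map $e\colon \spa(L) \to \spa(K)$; hence the image of $\Nm_L \xi_L M(X)$ in $\rigmot(K)$ is precisely $e_* \xi_L M(X)$. Once this identification is in place, parts (1) and (2) are essentially verbatim restatements of Corollary \ref{cor:wtstr-cg-from-negative}(2)(c) and (2)(d), after unwinding the right and left orthogonality conditions via the shift adjunction $\hom(N[-i], M) \simeq \hom(N, M[i])$: the shift index flips sign between the orthogonality formulation and the form in the corollary.

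Part (3) combines parts (2)(a) and (2)(b) of Corollary \ref{cor:wtstr-cg-from-negative}. The former gives the heart of $w$ on $\rigmot(K)$ as the idempotent completion of the full subcategory generated by $\mathcal{N}$ under arbitrary direct sums, while the latter tells us that $w$ restricts to a bounded weight structure on $\rigmot(K)_\omega$ whose heart contains $\mathcal{N}$; Proposition \ref{pro:bdwt-from-heart} then identifies this compact heart with the idempotent completion of the additive envelope of $\mathcal{N}$ under finite direct sums.

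No step is substantial, since the heavy lifting was done in Proposition \ref{pro:cpt-gen-gal-inv-gd}, Lemma \ref{lem:left-adjoint-G-inv} and Theorem \ref{thm:wt-str-rigmotcat}. The only bookkeeping point worth mentioning is the compatibility of the family $\mathcal{N}$ with the transition maps $\rigmot_{\mathrm{gr}}(F)^{\htcat \gal(F/K)} \hookrightarrow \rigmot_{\mathrm{gr}}(L)^{\htcat \gal(L/K)}$ of Corollary \ref{cor:galcolim-transmap}: the computation \eqref{eq:gal-inv-wt} in the proof of that corollary shows that the generators indexed by $F$ decompose into sums of generators indexed by $L$, so every compact generator in the colimit can indeed be written as an $e_*\xi_L M(X)$ as stated.
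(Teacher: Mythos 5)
Your argument is exactly the paper's: the published proof simply says the corollary is immediate from Corollary \ref{cor:wtstr-cg-from-negative}, applied to the negative class $\mathcal{N}$ used in Theorem \ref{thm:wt-str-rigmotcat}, and you have correctly spelled out the implicit steps (identifying $\Nm_L\xi_L M(X)$ with $e_*\xi_L M(X)$ via Lemma \ref{lem:left-adjoint-G-inv} and Proposition \ref{pro:galois-descent-mot-gdrd}, translating the orthogonality conditions in (2)(c),(d) with the correct sign flip, and combining (2)(a),(b) with Proposition \ref{pro:bdwt-from-heart} for part (3)). No gaps; this matches the intended proof.
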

\begin{proof}
These are immediate results of Corollary \ref{cor:wtstr-cg-from-negative}.
\end{proof}

\begin{cor}
  \label{cor:base-change-wtex}
  Let $L/K$ be an extension of complete non-archimedean fields with perfect residue fields $k_L/k$. Then the pullback functor
  \[
 e^{*}\colon \rigmot(K) \to \rigmot(L)
\]
is weight-exact, where $e\colon \spa(L) \to \spa(K)$ is the structure morphism. Furthermore, if $L/K$ is a finite extension, then
\[
e_{*}\colon \rigmot(L) \to \rigmot(K)
\]
is also weight-exact.
\end{cor}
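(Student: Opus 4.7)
The plan is to reduce both statements to a check on generators of the heart of the weight structure using Corollary~\ref{cor:cg-wtex}. By Corollary~\ref{cor:pos-neg-rigwt}(3), the heart of $w$ on $\rigmot(F)_{\omega}$ (for $F\in\{K,L\}$) is the idempotent completion of the full subcategory generated under finite direct sums by the objects $(e_{F'})_{*}\xi_{F'}M(X)$, where $F'/F$ runs over finite Galois extensions with structure map $e_{F'}$ and $X/k_{F'}$ is proper smooth. Both $e^{*}$ and $e_{*}$ lie in $\prlc$: $e^{*}$ by its construction, and $e_{*}$ (when $L/K$ is finite) because $e$ is then finite, hence proper, so $e_{*}\simeq e_{!}$ admits a right adjoint $e^{!}$ and thus preserves small colimits and compact objects.

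For weight-exactness of $e^{*}$, fix a heart generator $N=(e_{L'})_{*}\xi_{L'}M(X)$ of $\rigmot(K)$ with $L'/K$ finite Galois. The morphism $e_{L'}$ is finite, so proper base change in the six-functor formalism for rigid motives applied to the Cartesian square
\begin{equation*}
\begin{tikzcd}
\spa(L'\hat{\otimes}_{K}L)\arrow[r,"f"]\arrow[d,"p"'] & \spa(L')\arrow[d,"e_{L'}"]\\
\spa(L)\arrow[r,"e"'] & \spa(K)
\end{tikzcd}
\end{equation*}
gives $e^{*}N\simeq p_{*}f^{*}\xi_{L'}M(X)$. Since $L'/K$ is finite étale, $L'\hat{\otimes}_{K}L$ decomposes as a finite product $\prod_{i}L'_{i}$, each $L'_{i}/L$ being a finite Galois extension isomorphic to the compositum $L\cdot L'$ inside a chosen algebraic closure of $L$. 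Summand-by-summand, using Lemma~\ref{lem:gen-fib-pullback-sharp}, we obtain $e^{*}N\simeq \bigoplus_{i}(e_{L'_{i}})_{*}\xi_{L'_{i}}M(X_{k_{L'_{i}}})$, a finite direct sum of heart generators of $\rigmot(L)$.

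For weight-exactness of $e_{*}$ when $L/K$ is finite, fix a heart generator $N'=(e_{L'})_{*}\xi_{L'}M(X)$ of $\rigmot(L)$ with $L'/L$ finite Galois, so that $e_{*}N'\simeq(e\circ e_{L'})_{*}\xi_{L'}M(X)$. Let $\tilde L$ be the Galois closure of $L'/K$, with structure map $h\colon\spa(\tilde L)\to \spa(K)$; applying $h^{*}$ and repeating the proper base change argument identifies $h^{*}e_{*}N'$ with a finite direct sum of objects $\xi_{\tilde L}M(\sigma(X)_{k_{\tilde L}})$ indexed by cosets of $\gal(\tilde L/L')$ in $\gal(\tilde L/K)$, each of which lies in $\rigmot_{\mathrm{gr}}(\tilde L)$. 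Thus $e_{*}N'\in\rigmot_{\tilde L\text{-}\mathrm{gr}}(K)$, and under the equivalence of Proposition~\ref{pro:galois-descent-mot-gdrd} it corresponds to the $\gal(\tilde L/K)$-equivariant motive obtained by inducing from the $\gal(\tilde L/L')$-equivariant object $\xi_{\tilde L}M(X_{k_{\tilde L}})$. This induction agrees with a direct summand of $\Nm_{\tilde L}\xi_{\tilde L}M(X_{k_{\tilde L}})$ by the explicit formula of Proposition~\ref{pro:proper-sm-under-lad}, and hence lies in the heart of $w_{\tilde L/K}$ by Lemma~\ref{lem:wt-str-galinv}; the fully faithful weight-exact inclusion of Theorem~\ref{thm:wt-str-rigmotcat}(4) then places $e_{*}N'$ in the heart of $w$ on $\rigmot(K)$. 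The main obstacle I anticipate is rigorously tracking the Galois-equivariant structure through the Galois descent equivalence — in particular, recognizing $e_{*}N'$ as the induced equivariant motive and matching it with the norm-type generators furnished by Proposition~\ref{pro:proper-sm-under-lad}.
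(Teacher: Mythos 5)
Your treatment of $e^{*}$ is correct and essentially the paper's argument: both proofs reduce, via Corollary~\ref{cor:cg-wtex}, to checking that the heart generators $(e_{L'})_{*}\xi_{L'}M(X)$ land in the heart over $L$; you compute $e^{*}(e_{L'})_{*}$ by base change along the splitting of $L'\otimes_{K}L$, while the paper instead uses the compatibilities of Lemma~\ref{lem:gen-fib-pullback-sharp} and diagram~\eqref{eq:lowstargr} to identify the pullback with $\xi_{L}M(X\times_{k}k_{L})$. For $e_{*}$, however, the paper takes a much shorter, genuinely different route: for $L/K$ finite separable, $e_{*}$ is both left and right adjoint to $e^{*}$ (as in Lemma~\ref{lem:left-adjoint-G-inv}), so its weight-exactness follows formally from that of $e^{*}$ by \cite[Proposition 1.2.3 (9)]{Bon14}; your generator-by-generator computation through the Galois closure replaces this adjunction argument.

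There are two genuine gaps in your $e_{*}$ argument. First, it silently assumes $L/K$ separable: you pass to ``the Galois closure $\tilde L$ of $L'/K$'', which does not exist when $L/K$ has an inseparable part, and this case is allowed by the hypotheses (the residue fields are perfect, but $K$ itself may be imperfect, e.g.\ $K=\mathbb{F}_q((t))$ and $L=\mathbb{F}_q((t^{1/p}))$). The paper handles it by factoring through the maximal separable subextension and invoking invariance of rigid analytic motives under purely inseparable extensions (\cite[Corollary 2.9.11]{AGV22}); your proof needs this extra step. Second, the pivotal claim that the induced $\gal(\tilde L/K)$-equivariant object is a direct summand of $\Nm_{\tilde L}\xi_{\tilde L}M(X_{k_{\tilde L}})$ is asserted, not proved — you flag this yourself. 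It is true, but the equivariant bookkeeping is unnecessary: with $g\colon \spa(\tilde L)\to\spa(L')$ and $h\colon \spa(\tilde L)\to \spa(K)$, the projection formula gives $h_{*}\xi_{\tilde L}M(X_{k_{\tilde L}})\simeq (e\circ e_{L'})_{*}\bigl(\xi_{L'}M(X)\otimes g_{*}{\mathbbm 1}\bigr)$, and since the coefficients are rational, ${\mathbbm 1}$ is a direct summand of $g_{*}{\mathbbm 1}$ by the trace splitting; hence $e_{*}N'$ is a retract of the heart generator $h_{*}\xi_{\tilde L}M(X_{k_{\tilde L}})$ of $w$ (Corollary~\ref{cor:pos-neg-rigwt}), and the heart is retract-closed. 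With these two repairs your argument goes through, though it remains considerably longer than the paper's adjunction argument.
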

\begin{proof}
  We firstly prove $e^{*}$ is weight-exact by using Corollary \ref{cor:cg-wtex}. We let $F/K$ be a finite Galois extension with residue field $k_{F}$ and $X$ be a proper smooth variety over $k_{F}$. Let $\alpha \colon \spa(F) \to \spa(K)$ be the structure map. We need to prove $e^{*} \alpha_{*} \xi_FM(X)$ lies in the heart of the weight structure of $\rigmot(L)$:
  \[
e^{*}\alpha_{*} \xi_F (M(X)) \simeq \xi_L \bar{e}^{*} \bar{\alpha}_{\sharp} M(X) \simeq \xi_{L}M(X \times_k k_{L}) \in \rigmot(L)^{\heartsuit_w}.
\]
  
For the weight-exactness of $e_{*}$ when $L/K$ is finite, we firstly assume $L/K$ is separable. As in Lemma \ref{lem:left-adjoint-G-inv}, we can know that $e_{*}$ is both the left and right adjoint of $e^{*}$. Therefore, we can conclude the weight-exactness of $e_{*}$ from the weight-exactness of $e^{*}$ thanks to \cite[Proposition 1.2.3 (9)]{Bon14}. In the general case, we can choose the separable closure $L_0$ of $K$ in $L$; hence $L/L_0$ is purely inseparable. Thus, we conclude the general case from \cite[Corollary 2.9.11]{AGV22} and the separable case.
\end{proof}


\section{Application: the Weight Spectral Sequence}
\label{sec:appl-weight-filtr}

In the section, we apply Theorem \ref{thm:wt-str-rigmotcat} to give the weight filtrations of Hyodo-Kato cohomologies $H^i_{\mathrm{HK}}(X)$ of smooth quasi-compact rigid analytic spaces $X$ over $K$.

In \S \ref{subsec:motivic-hydo-kato}, we review the motivic construction of Hyodo-Kato cohomologies developed in \cite{BGV25}, which take values in the derived $\infty$-category of $(\varphi,N, G_K)$-modules introduced in \S \ref{subsec:der-coeff}. In \S \ref{subsec:weight-filtrations}, we show that the motivic Hyodo-Kato realization factors through the weight complex functor, giving rise to a spectral sequence converging to Hyodo-Kato cohomologies. In particular, this induces the weight filtration on the Hyodo-Kato cohomology.

\begin{notation}
  \label{not:wtfil-setups}
  Throughout this section, we assume $K$ is a complete non-archimedean field with perfect residue field $k$. We fix an algebraic closure $\bar{K}$ of $K$ with the Galois group $G_K=\gal (\bar{K}/K)$. Let $C$ be the completion of $\bar{K}$ whose residue field is denoted by $\bar{k}$. We denote by $K_0= \frc W(k)$ the Arithmetic Frobenius $\sigma_{K}$ and write $\breve{K}:=\frc W(\bar{k})$. Following Theorem \ref{thm:wt-str-rigmotcat}, we denote by $w$ the weight structure on $\rigmot(K)$ whose heart is simply denoted by $\mathcal{H}_{K}$. In particular, there is a symmetric monoidal functor
\[
\subss{W}\colon \rigmot(K)_{\omega} \to \bkch(\htcat \mathcal{H}_{K}),
\]
called the weight complex functor; see Construction \ref{cons:wt-cplx}.
\end{notation}

\subsection{\texorpdfstring{The Derived $\infty$-Category of $(\varphi,N, G_K)$-Modules}{The Derived Infinite-Category of (Phi,N, GK)-Modules}}
\label{subsec:der-coeff}
We recall coefficients of cohomologies from \cite[\S 1.15]{Bei13}, \cite[\S 2.1]{DN18}, \cite{Fon94}, \cite[\S 8.2]{FO22}:

\begin{df}
  \label{df:phi-N-G-mod}
  Let $L/K$ be a finite Galois extension inside $\bar{K}$ with the Galois group $G_{L/K}$ and let $k_{L}$ be the residue field of $L$. We write $L_0=\frc W(k_L)$ and $\sigma_L\colon L_0 \to L_0$ for the arithmetic Frobenius of $L_0$.
  \begin{enumerate}
\item The category $\module{L_0}(\varphi,N, G_{L/K})$ of \myemph{$\boldsymbol{(\varphi,N, G_{L/K})}$-modules} over $L_0$ is the following category:
  \begin{itemize}
  \item objects are finite dimensional $L_0$-vector spaces $D$ equipped with
    {%
    \begin{itemize}
   \item a semi-linear $G_{L/K}$-action on $D$;
    \item a $\sigma_L$-semi-linear and $G_{L/K}$-equivariant bijection $\varphi_D\colon D \to D$;
    \item a $L_0$-linear and $G_{L/K}$-equivariant map $N_D \colon D \to D$ satisfying $N_D \varphi_D=p \varphi_D N_{D}$;
    \end{itemize}
    }%
  \item morphisms are $L_0$-linear maps commuting with $\varphi$, $N$ and $G_{L/K}$-actions.
  \end{itemize}
  \item If $L=K$, then the category of $(\varphi,N, G_{L/K})$-modules is called the category of \myemph{$\boldsymbol{(\varphi,N)}$-modules}, simply denoted by $\module{K_0}(\varphi,N)$. Moreover, in the case, the full subcategory spanned by those with $N=0$ is called the category of \myemph{$\boldsymbol{\varphi}$-modules} over $K_0$, and we will denote it by $\module{K_0}(\varphi)$.
\end{enumerate}
\end{df}

\begin{eg}
  \label{eg:isocry-twist}
\begin{enumerate}
\item For every $n \in \Z$, we define the \myemph{Tate twist} $K_0(n) \in \module{K_0}(\varphi)$ by the one-dimensional $K_0$-vector space $K_0$ with the twisted Frobenius $\varphi_{K_{0}(n)}=p^{-n} \sigma_{K}$.
  
\item Let $(D, \varphi_{D}) \in \module{K_0}(\varphi)$. Then, for every $n \in \Z$, we let $D(n)$ denote the tensor product $(D, \varphi_{D}) \otimes K_0(n)$ in $\module{K_0}(\varphi)$.
  
\item With the notation in ($2$), we can identify $(\varphi,N)$-modules over $K_0$ with $\varphi$-modules over $K_0$ with a morphism $N\colon D \to D(-1)$ in $\module{K_0}(\varphi)$.
\end{enumerate}
\end{eg}

\begin{notation}
  \label{not:dcoeff}
  Keeping notations as in Definition \ref{df:phi-N-G-mod}, we will denote the bounded derived $\infty$-category of $(\varphi,N,G_{L/K})$-modules over $L_0$ by $\bdcat_{(\varphi,N,G_{L/K})}(L_{0})$ and its Ind-completion by $\mathcal{D}_{(\varphi,N,G_{L/K})}(L_0)$. In particular, we have some special cases:
  \begin{enumerate}
  \item the derived $\infty$-category $\dcat_{(\varphi,N)}(K_{0})$ of $(\varphi,N)$-modules over $K_0$ when $L= K$;
  \item and similarly, we have the derived $\infty$-category $\dcat_{\varphi}(K_0)$ of $\varphi$-modules over $K_0$.
  \end{enumerate}
  If $L$ runs through all possible finite Galois extension of $K$, then we get
  \[
\bdcat_{(\varphi,N,G_K)}(\breve{K}):= \colim \bdcat_{(\varphi,N,G_{L/K})}(L_{0}) 
\]
where transition functors are induced by base change, and the Ind-completion of it is denoted by $\dcat_{(\varphi,N,G_K)}(\breve{K})$. We will call it the derived $\infty$-category of \myemph{$\boldsymbol{(\varphi,N,G_K)}$-modules} over $K$, which is in the sense of Fargues-Fontaine; see \cite[Définition 10.6.6]{thecurve} and the remark below.

These $\infty$-categories are indeed stable symmetric monoidal $\infty$-categories (see \cite[Proposition 3.2]{Dre15}, \cite[Corollary 4.8.1.14]{HA}).
\end{notation}

\begin{rmk}
  \label{rmk:phi-N-gal-mod}
In Definition \ref{df:phi-N-G-mod}, we did not use the assumption that $L/K$ is finite. Thus, there is a notion of $(\varphi,N,G_{K})$-modules defined as in Definition \ref{df:phi-N-G-mod}. Let us call them \myemph{naive $\boldsymbol{(\varphi,N,G_K)}$-modules}. However, the $\infty$-category $\dcat_{(\varphi,N,G_K)}(\breve{K})$ introduced above is not the derived $\infty$-category of all such naive $(\varphi,N,G_{K})$-modules. Rather, as mentioned, objects of $\bdcat_{(\varphi,N,G_K)}(\breve{K})$ defined above are discrete in the sense of Fontaine; that is, they form a special subclass of naive $(\varphi,N,G_K)$-modules; see \cite[$\mathrm{n}^{\circ}$ 4.2.2]{Fon94}. More precisely, a $(\varphi,N,G_K)$-module here is a naive $(\varphi,N,G_{K})$-module for which the inertia group $I_K$ acts with open stabilizers. This restriction does not affect the construction of the realization functors, since these functors always factor through the derived category $\dcat_{(\varphi,N,G_K)}(\breve{K})$.
\end{rmk}

\begin{rmk}
  \label{rmk:monadic-coeff}
  There is a categorical framework to construction the $\infty$-category $\dcat_{(\varphi,N)}(K_{0})$ from the $\infty$-category of $\varphi$-modules over $K_0$. To be precise, there is a canonical monoidal equivalence (\cite[Proposition 2.50]{BGV25})
  \begin{equation}
    \label{eq:phi-N-mod}
    \mathcal{D}_{(\varphi,N)}(K_0) \simeq \mathcal{D}_{\varphi}(K_0)_{\mathrm{nil}}^{-\otimes K_0(-1)}
  \end{equation}
  in $\calg(\prstc)$; where the right hand-side is defined in \cite[\S 2]{BGV25} whose objects are $\varphi$-modules $D$ together with a morphism $D \to D(-1)$, refereed as the \myemph{monodromy operator}, of $\varphi$-modules.
\end{rmk}

We next study some variants of Notation \ref{not:dcoeff}, which will be useful later.

\begin{notation}
  Let $L/K$ be a finite Galois extension. We define
  \[
\dcat_{(\varphi,G_{L/K})}(L_{0}):= \dcat_{\varphi}(L_0)^{\htcat G_{L/K}}
\]
where $\dcat_{\varphi}(L_{0}) \in \calg(\prstc)$ is acted on by the Galois group $G_{L/K}$ via the surjection $G_{L/K} \twoheadrightarrow G_{L_0/K_{0}}$. As above, let $L$ run through all possible finite Galois extensions; we can define
\[
\dcat_{(\varphi,G_K)}(\breve{K}):= \colim \dcat_{(\varphi,G_{L/K})}(L_{0})
\]
in $\calg(\prstc)$.
\end{notation}

For a finite Galois extension $L/K$, let $(D,\varphi_D,N_D,\rho)$ be a $(\varphi,N,G_{L/K})$-module over $L_0$. For each $g \in G_{L/K}$, with image $ \bar{g} \in G_{L_0/K_{0}}$, the map $\rho_g$ induced an isomorphism
\[
\rho_g \colon D \to \sigma_g^{*} D \simeq \bar{\sigma}_{g}^{*} D
\]
of $(\varphi,N)$-modules over $L_0$. Since forgetting the $G_{L/K}$-action is exact, we have a natural functor
\[
F \colon \dcat_{(\varphi,N,G_{L/K})}(L_0) \to \dcat_{(\varphi,N)}(L_0).
\]
For each $g \in G_{L/K}$, we also denote by $\sigma_g^{*}$ the corresponding autofunctors on $\dcat_{(\varphi,N)}(L_0)$. Then we have natural isomorphisms $F \xrightarrow{\simeq} \sigma_g^{*} \circ F$ for all $\sigma_g \in G_{L/K}$, which are compatible with the group multiplication in $G_{L/K}$ (since the same holds for $\rho$). In particular, this yields a functor
\[
\dcat_{(\varphi,N,G_{L/K})}(L_0) \to \dcat_{(\varphi,N)}(L_0)^{\htcat G_{L/K}}
\]
in $\calg(\prstc)$.

\begin{pro}
  \label{pro:G-inv-phi-N}
  Let $L/K$ be a finite Galois extension. The natural functor
  \[
\dcat_{(\varphi,N,G_{L/K})}(L_0) \xrightarrow{\simeq} \dcat_{(\varphi,N)}(L_0)^{\htcat G_{L/K}}
\]
defined above is an equivalence in $\calg(\prstc)$. In particular, we get the functor forgetting monodromy operators
\[
  \dcat_{(\varphi,N,G_{L/K})}(L_0)\xrightarrow{\simeq} \dcat_{(\varphi,N)}(L_0)^{\htcat G_{L/K}} \to \dcat_{\varphi}(L_0)^{\htcat G_{L/K}} = \dcat_{(\varphi,G_{L/K})}(L_{0}).
\]
\end{pro}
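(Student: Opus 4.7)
The plan runs as follows. By Proposition~\ref{pro:clim-calgpr} combined with Remark~\ref{rmk:lim-prst}, the forgetful functor $\calg(\prstc) \to \bigcatinf$ is conservative and preserves small limits; hence it suffices to establish the equivalence at the level of underlying $\infty$-categories. Both sides are compactly generated: the source by construction, and the target by Proposition~\ref{pro:lim-in-prrc}, using that $G_{L/K}$ acts on $\dcat_{(\varphi,N)}(L_{0})$ through equivalences (so the diagram defining the homotopy fixed points lands in $\prrc$). The same proposition identifies a set of compact generators of $\dcat_{(\varphi,N)}(L_{0})^{\htcat G_{L/K}}$ as $\Nm(M)$, where $M$ ranges over compact objects of $\dcat_{(\varphi,N)}(L_{0})$ and $\Nm$ denotes the left adjoint of the forgetful functor.

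First I would prove fully faithfulness. For $D_{1},D_{2} \in \dcat_{(\varphi,N,G_{L/K})}(L_{0})$ with underlying objects $M_{1},M_{2}$, Corollary~\ref{cor:mapsp-htfixpt} applies (since $K_{0}$ has characteristic zero, so $\abs{G_{L/K}}$ is invertible in the coefficient ring) and yields
\[
\pi_{i}\map_{\dcat_{(\varphi,N)}(L_{0})^{\htcat G_{L/K}}}(F(D_{1}), F(D_{2})) \simeq \mathrm{Ext}^{-i}_{\module{L_{0}}(\varphi,N)}(M_{1},M_{2})^{G_{L/K}},
\]
where $G_{L/K}$ acts by conjugation. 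On the other hand, the source mapping spaces compute $\mathrm{Ext}^{-i}_{\module{L_{0}}(\varphi,N,G_{L/K})}(D_{1},D_{2})$. These groups coincide because the forgetful functor $\module{L_{0}}(\varphi,N,G_{L/K}) \to \module{L_{0}}(\varphi,N)$ is exact and the associated Grothendieck spectral sequence
\[
E_{2}^{p,q} = H^{p}(G_{L/K}, \mathrm{Ext}^{q}_{\module{L_{0}}(\varphi,N)}(M_{1},M_{2})) \Rightarrow \mathrm{Ext}^{p+q}_{\module{L_{0}}(\varphi,N,G_{L/K})}(D_{1},D_{2})
\]
collapses onto its edge column, since higher group cohomology of $K_{0}$-vector spaces vanishes by the averaging argument.

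Next I would establish essential surjectivity. The functor is colimit-preserving and fully faithful by the previous step, so its essential image is colimit-closed; it therefore suffices to exhibit each $\Nm(M)$ in the essential image. For compact $M$, represented by a bounded complex of finite-dimensional $(\varphi,N)$-modules, the direct sum $\bigoplus_{g \in G_{L/K}} \bar{g}^{*}M$ carries a natural semi-linear $G_{L/K}$-action by cyclic permutation which is compatible with $\varphi$ and $N$; this defines an object $\tilde{M}$ of $\module{L_{0}}(\varphi,N,G_{L/K})$ whose image under $F$ is identified with $\iota\,\Nm(M)$ by an adjunction computation modelled on the proof of Proposition~\ref{pro:proper-sm-under-lad}. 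The ``in particular'' statement is then immediate after postcomposing with the obvious $G_{L/K}$-equivariant forgetful functor $\dcat_{(\varphi,N)}(L_{0}) \to \dcat_{\varphi}(L_{0})$. I expect the main technical point to be the degeneration of the spectral sequence in the fully faithful step, with everything else being a formal consequence of the compact generation and homotopy fixed point machinery developed in \S\ref{subsec:htfixpt}.
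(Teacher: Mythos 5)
Your proposal is essentially correct, but it follows a genuinely different route from the paper's proof, which restricts to the bounded parts and computes the source mapping space directly: it invokes the explicit presentation of $\map_{\bdcat_{(\varphi,N,G_{L/K})}(L_0)}(D,E)$ as a finite homotopy limit of $G_{L/K}$-homotopy-fixed mapping spaces over $\bdcat(L_0)$ (the formula of D\'eglise--Niziol/Beilinson, encoding $\varphi$ and $N$ as a square of correction maps), then commutes limits with limits and concludes full faithfulness from Corollary \ref{cor:mapsp-htfixpt}; essential surjectivity is then dispatched by unwinding what an object of $\dcat_{(\varphi,N)}(L_0)^{\htcat G_{L/K}}$ is. You instead compare Ext groups in the equivariant abelian category with $G_{L/K}$-invariants of Ext groups of the underlying $(\varphi,N)$-modules via a descent (Grothendieck-type) spectral sequence, and handle essential surjectivity through compact generation and explicitly induced objects $\bigoplus_{g}\bar g^{*}M$. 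Your approach buys a more explicit treatment of essential surjectivity (the paper's is rather terse), while the paper's approach avoids your main technical debt: the spectral sequence you assert does not come for free, since the category of finite-dimensional $(\varphi,N,G_{L/K})$-modules has no evident supply of injectives, so you must either pass to Ind-completions and use induced injective resolutions, or replace the spectral sequence by an averaging/retract-of-induced-objects argument --- this is precisely the content the paper outsources to \cite[Lemma 2.5]{DN18}. Two further points to tighten: the identification in your surjectivity step should be of $F(\tilde M)$ with $\Nm(M)$ \emph{in the fixed-point category} (your phrase ``image under $F$ is identified with $\iota\,\Nm(M)$'' conflates the two sides); the correct argument is to take the map $\Nm(M)\to F(\tilde M)$ adjoint to the unit-summand inclusion $M\to\iota F(\tilde M)$ and check it is an equivalence after applying the conservative forgetful functor $\iota$, using the formula $\iota\Nm(M)\simeq\bigoplus_{g}\sigma_g^{*}M$. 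That formula is true, but it does not follow from Proposition \ref{pro:proper-sm-under-lad}, whose proof is geometric (via $\spec(k_L\otimes_k k_L)$); you need the general pointwise left Kan extension formula for the induction functor along $\Delta^0\to \subss{B}G_{L/K}$, which should be stated and proved in this abstract setting. With those repairs the plan goes through.
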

\begin{proof}
  It suffices to prove by restricting to the bounded parts. We first show that it is fully faithful. Let $D$ and $E$ be bounded complexes of $(\varphi,N,G_{L/K})$-modules. Then, by \cite[(2.3)]{DN18}, the mapping space $\map_{\bdcat_{(\varphi,N,G_{L/K})}(L_{0})}(D,E)$ is the homotopy limit of the diagram
  \[
\begin{tikzcd}
{\map_{\bdcat(L_0)}(D,E)^{\htcat G_{L/K}}} \arrow[r, "{\varphi_{E,*}-\varphi_D^*}"] \arrow[d, "{N_{E,*}-N_D^*}"'] & {\map_{\bdcat(L_0)}(D,\sigma_{L,*}E)^{\htcat G_{L/K}}} \arrow[d, "{N_{E,*}-pN_D^*}"] \\
{\map_{\bdcat(L_0)}(D,E)^{\htcat G_{L/K}}} \arrow[r, "{p\varphi_{E,*}-\varphi_D^*}"']                             & {\map_{\bdcat(L_0)}(D,\sigma_{L,*}E)^{\htcat G_{L/K}}}                              
\end{tikzcd}
\]
Since limits commute with limits, we know that
\[
\map_{\bdcat_{(\varphi,N,G_{L/K})}(L_0)}(D,E) \simeq \map_{\bdcat_{(\varphi,N)}(L_0)}(D,E)^{\htcat \gal (L/K)}
\]
by \cite[Lemma 2.5]{DN18} or \cite[\S 1.15]{Bei13}. The full faithfulness follows from Corollary \ref{cor:mapsp-htfixpt}. The essential surjectivity is deduced from the construction of the functor and the definition of $\dcat_{(\varphi,N)}(L_0)^{\htcat G_{L/K}}$, whose objects are equivalently given by an object in $\dcat_{(\varphi,N)}(L_0)$ equipped with an action of $G_{L/K}$ satisfying the higher coherence.

Finally, forgetting monodromy operators is compatible with the action of $G_{L/K}$, hence forgetful functors are well-defined.
\end{proof}

\subsection{Motivic Hyodo-Kato Cohomology}
\label{subsec:motivic-hydo-kato}
There is a motivic analogy of Remark \ref{rmk:monadic-coeff}: we have a monoidal equivalence
    \[
\agmot(k)^{-\otimes {\mathbbm 1}(-1)}_{\mathrm{nil}} \simeq \module{{\mathbbm 1} \oplus {\mathbbm 1}(-1)} (\agmot(k)),
\]
see \cite[Proposition 2.32, Corollary 4.14]{BGV25}. If we choose a pseudo-uniformizer $\varpi$, we can identify the latter with $\rigmot_{\mathrm{gr}}(K)$ by Proposition~\ref{pro:rigmot-as-module} and \cite[Corollary 3.8.32]{AGV22}.

From this perspective, there is a motivic Hyodo-Kato cohomology\footnote{The effect of different choices of pseudo-uniformizer $\varpi$ has been explained in \cite[Remark 4.18]{BGV25}, and we also refer readers to \cite[Remark 3.4]{BKV25} below for details in the log approach.}:
\[
\rgama{\mathrm{HK}}^{\varpi}\colon \rigmot_{\mathrm{gr}}(K) \simeq \agmot(k)_{\mathrm{nil}}^{-\otimes {\mathbbm 1}(-1)} \to \mathcal{D}_{\varphi}(K_0)_{\mathrm{nil}}^{-\otimes K_0(-1)} \simeq \mathcal{D}_{(\varphi,N)}(K_{0})
\]
which is induced by adding the monodromy operators in the motivic rigid cohomology and takes values in the derived $\infty$-category of $\varphi$-modules. For further details, we refer to \cite[\S 4]{BGV25}, especially \cite[Lemma 4.30]{BGV25} for the middle functor above. We will refer to this functor as the \myemph{$\varpi$-Hyodo-Kato realization}.

\begin{rmk}
  \label{rmk:covariant-HK}
In general, cohomology theories are contravariant functors. However, the motivic Hyodo–Kato cohomology considered here is a covariant functor. This is because it is defined by taking the Hyodo–Kato cohomology of the duals of motives. Accordingly, the realization functor used in this context computes the dual of the usual Hyodo-Kato cohomology; see Proposition~\ref{pro:HK-coh} below.
\end{rmk}

\begin{pro}
  \label{pro:HK-coh}
  Let $\varpi $ be a pseudo-uniformizer of $\mathcal{O}_{K}$.
  \begin{enumerate}
  \item The functor $\pi\circ \rgama{\mathrm{HK}}^{\varpi} \circ \xi \colon \agmot(k) \to \dcat_{\varphi}(K_0)$ computes the rigid cohomology, where $\pi$ is the functor forgetting the monodromy. In particular, if $k$ is finite, for any proper smooth variety $X$ over $k$, the $i$-cohomology (or $(-i)$-homology) of $\pi \circ \rgama{\mathrm{HK}}^{\varpi} \circ \xi(M(X)^{\vee})$ is pure of weight $i$ (see \cite[Definition 2.42]{BGV25}).

  \item Assume $k$ is a finite field. Then the functor $\rgama{\mathrm{HK}}^{\varpi}$ computes the arithmetic overconvergent Hyodo-Kato cohomology in the sense of \cite[\S 5.2.2]{CN20} if $\Char K=p>0$ or $\Char K=0$ with $\varpi=p$.
\end{enumerate}
\end{pro}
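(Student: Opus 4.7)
The plan is to prove the two parts separately, reducing each to statements already available in the literature, in particular in \cite{BGV25}.

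For part (1), I would first unwind the definition of the $\varpi$-Hyodo--Kato realization. By its very construction it factors as
\[
\rigmot_{\mathrm{gr}}(K) \simeq \agmot(k)_{\mathrm{nil}}^{-\otimes {\mathbbm 1}(-1)} \xrightarrow{\rho} \mathcal{D}_{\varphi}(K_{0})_{\mathrm{nil}}^{-\otimes K_{0}(-1)} \simeq \mathcal{D}_{(\varphi,N)}(K_{0}),
\]
where $\rho$ is induced (levelwise, via the presentation of \cite[Lemma 4.30]{BGV25}) by the rigid cohomology realization on $\agmot(k)$, which adds the motivic monodromy on the nilpotent piece. Now the functor $\xi \colon \agmot(k) \to \rigmot_{\mathrm{gr}}(K)$ corresponds, under the monoidal equivalence above, to the free module functor. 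In particular its image consists of objects of $\mathcal{D}_{(\varphi,N)}(K_{0})$ whose monodromy operator $N$ vanishes, so that post-composing with $\pi$ gives exactly the rigid realization functor. The purity statement is then the classical Weil-type result for rigid cohomology of proper smooth varieties over a finite field (Chiarellotto--Le Stum, Kedlaya), with the shift in weight $(-i$ instead of $i)$ accounting for the contravariant-to-covariant dualization convention recorded in Remark~\ref{rmk:covariant-HK}.

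For part (2), I would compare $\rgama{\mathrm{HK}}^{\varpi}$ with the Colmez--Nizio\l{} construction of \cite{CN20} on a set of compact generators of $\rigmot_{\mathrm{gr}}(K)$. Both functors are exact, symmetric monoidal, and preserve filtered colimits, so it suffices to identify them on motives of the form $M(\mathcal{X}_{\eta})$ for $\mathcal{X}$ a smooth proper (or more generally semistable) formal $\mathcal{O}_{K}$-scheme. On such generators the motivic Hyodo--Kato cohomology of \cite{BGV25} is shown to coincide with the classical Hyodo--Kato cohomology computed via the log-crystalline complex, and the arithmetic overconvergent Hyodo--Kato cohomology of \cite[\S 5.2.2]{CN20} agrees with the same classical object on smooth proper rigid analytic spaces that extend to such formal models. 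Extending this comparison to all compact motives then yields the equivalence of functors.

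The main obstacle will be part (2): matching all the normalizations (Frobenius, monodromy, Tate twist) between the motivic construction and the Colmez--Nizio\l{} construction, and showing that the choice of pseudo-uniformizer $\varpi$ on the motivic side corresponds to the arithmetic choice made in \cite{CN20}, which is precisely why the assumption $\varpi = p$ is required in the mixed-characteristic case. This amounts to tracing through the explicit presentation of the overconvergent Hyodo--Kato complex and checking that each structure is intertwined; this is essentially the content of the discussion leading up to \cite[Theorem 4.53]{BGV25}, which we would cite where possible.
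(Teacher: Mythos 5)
The paper's own proof is pure citation: part (1) is \cite[Proposition 4.44]{BGV25} together with \cite{KM74} for purity, and part (2) is \cite[Corollary 4.58]{BGV25} together with \cite[Proposition 3.30]{BKV25}. You instead attempt to re-derive part (1) by unwinding the construction, and that is where I see a genuine gap. You write that since $\xi$ corresponds to the free $\chi{\mathbbm 1}$-module functor, ``in particular its image consists of objects of $\mathcal{D}_{(\varphi,N)}(K_0)$ whose monodromy operator $N$ vanishes.'' That inference is not immediate. The free module over a square-zero extension does \emph{not} naively have trivial module structure; rather, the equivalence $\module{\chi{\mathbbm 1}}(\agmot(k))\simeq \agmot(k)_{\mathrm{nil}}^{-\otimes{\mathbbm 1}(-1)}$ (and similarly on the $\varphi$-module side) is of a Koszul-duality flavour, and it is precisely under \emph{that} identification that free $\chi{\mathbbm 1}$-modules are carried to objects with trivial monodromy. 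This is the nontrivial content that \cite[Proposition 4.44]{BGV25} supplies, so your ``in particular'' conceals the key step rather than deriving it. The conclusion you reach is correct, but the reasoning as written is a non sequitur unless you explain (or cite) why free modules on the one side become $N=0$ objects on the other. Also note that the paper attributes purity to \cite{KM74} (Katz--Messing) rather than Chiarellotto--Le~Stum/Kedlaya; both are legitimate sources, so this is a presentational difference.

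For part (2), your plan of checking the comparison with the Colmez--Nizio\l{} construction on compact generators (smooth proper or semistable formal models), matching normalizations, and extending by colimits is a reasonable sketch of how such a comparison is established. The paper, however, simply cites \cite[Corollary 4.58]{BGV25} and \cite[Proposition 3.30]{BKV25}, which carry out exactly that comparison, including the rôle of the choice $\varpi=p$ in mixed characteristic; \cite[Theorem 4.53]{BGV25}, which you mention, is a closely related but distinct statement (the motivic Hyodo--Kato isomorphism). You should replace that citation with the two more precise ones, or else actually carry out the normalization-matching you describe as the ``main obstacle,'' which is not done in your sketch.
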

\begin{proof}
The part ($1$) is \cite[Proposition 4.44]{BGV25}, and see also \cite{KM74} and Remark \ref{rmk:covariant-HK}. The part ($2$) follows from \cite[Corollary 4.58]{BGV25} and \cite[Proposition 3.30]{BKV25}.
\end{proof}

\begin{notation}
  In light of Proposition \ref{pro:HK-coh} ($2$), we will adopt the notation
\[
\rgama{\mathrm{HK},K}\colon \rigmot_{\mathrm{gr}}(K) \to \dcat_{(\varphi,N)}(K_{0})
\]
for $\rgama{\mathrm{HK}}^{\varpi}$ when $\Char K=p>0$ or $\Char K=0$ with $\varpi=p$. In other words, when we use this abbreviation, we have already chosen a specific pseudo-uniformizer.
\end{notation}

\begin{cons}
  \label{cons:arith-HK}
  We next recall how to enhance the Hyodo-Kato realization to take value of $(\varphi,N,G_{K})$-modules; see also \cite[Corollary 3.35]{BKV25}: we have a realization, denoted by $\rgama{\mathrm{HK}}^{\mathrm{ari}}$
    \begin{align*}
    \rigmot(K)\simeq \colim \rigmot_{\mathrm{gr}}(L)^{\htcat G_{L/K}} \to \colim \dcat_{(\varphi,N)}(L_0)^{\htcat G_{L/K}} &\simeq \colim \dcat_{(\varphi,N,G_{L/K})}(L_{0})\\
     &\simeq \dcat_{(\varphi,N,G_K)}(\breve{K}),
\end{align*}
where the first equivalence is Proposition \ref{pro:rigmot-cptgen-pgd} and the second equivalence is Proposition \ref{pro:G-inv-phi-N}. Furthermore, we can forget monodromy operators:
\begin{equation}
  \label{eq:phi-GK-HK}
  \rgama{\mathrm{HK}}^{(\varphi,G_{K})} \colon \rigmot(K) \to \colim \dcat_{(\varphi,N)}(L_0)^{\htcat G_{L/K}} \to \colim \dcat_{(\varphi,G_{L/K})}(L_0) \simeq \dcat_{(\varphi,G_K)}(\breve{K}).
\end{equation}
\end{cons}


\subsection{The Weight Filtration on the Hyodo-Kato Cohomology}
\label{subsec:weight-filtrations}
In the final part, we use the weight complex functor (see Construction \ref{cons:wt-cplx}) to construct a spectral sequence and deduce the weight filtration on the Hyodo-Kato realizations of compact rigid analytic motives over $K$. This can be regarded as the analogue\footnote{The semi-stable reduction case is already given in \cite{BGV25}.} of the weight spectral sequence studied in \cite{RZ82} for rigid analytic spaces. But we do not assume the existence of any formal models.

From now on, we assume the residue field $k$ of $K$ is a finite field.

\begin{lem}
  \label{lem:GHK-wtcplx}
The realization functor (\ref{eq:phi-GK-HK})
  \[
\rgama{\mathrm{HK}}^{(\varphi,G_{K})} \colon \rigmot(K)_{\omega} \to \bdcat_{(\varphi,G_K)}(\breve{K})
\]
factors through the weight complex functor $\subss{W}\colon \rigmot(K)_{\omega} \to \bkch(\htcat \mathcal{H}_{K})$.
\end{lem}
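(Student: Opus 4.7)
The plan is to apply the universal property of the weight complex functor (Sosnilo, Remark~\ref{rmk:bdwt-heart}): weight-exact functors between boundedly weighted stable $\infty$-categories canonically commute with the weight complex, yielding a commutative square
\[
\begin{tikzcd}
\rigmot(K)_{\omega} \ar[r, "\subss{W}"] \ar[d, "\rgama{\mathrm{HK}}^{(\varphi,G_K)}"'] & \bkch(\htcat\mathcal{H}_{K}) \ar[d] \\
\bdcat_{(\varphi,G_K)}(\breve{K}) \ar[r, "\subss{W}'"'] & \bkch(\htcat\mathcal{H}_{\mathrm{tar}})
\end{tikzcd}
\]
for any compatible weight structure on the target. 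When $\subss{W}'$ is an equivalence, composing with its inverse yields the required factorization $\tilde{R}$. So the strategy is to equip $\bdcat_{(\varphi,G_K)}(\breve{K})$ with a bounded weight structure for which (i)~$\rgama{\mathrm{HK}}^{(\varphi,G_K)}$ is weight-exact and (ii)~the target's weight complex is an equivalence.

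To secure both conditions, I would construct a Chow-like bounded weight structure on the target whose heart $\mathcal{H}_{\mathrm{tar}}$ consists of complexes pure of weight~$0$, that is, those $C^\bullet$ for which the cohomology in degree $i$ is pure of Frobenius weight $i$. The existence of such a structure rests on the semi-simplicity of $\module{}_{(\varphi,G_K)}(\breve{K})$: the Dieudonné-Manin classification gives semi-simplicity for $\varphi$-modules over $\breve{K}=\frc W(\bar{k})$ (algebraically closed residue field), and taking $G_{L/K}$-invariants for finite Galois $L/K$ preserves semi-simplicity with $\Q$-coefficients via vanishing of higher Galois cohomology on $\Q$-vector spaces (as used in Corollary~\ref{cor:mapsp-htfixpt}). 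Semi-simplicity further identifies $\bdcat_{(\varphi,G_K)}(\breve{K})$ with $\bkch(\mathcal{H}_{\mathrm{tar}})$ equipped with the canonical weight structure of Example~\ref{eg:wtstr-chain}, so its weight complex is an equivalence.

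For weight-exactness, I would apply Corollary~\ref{cor:cg-wtex} to reduce to a check on the generators of $\mathcal{H}_K$ listed in Corollary~\ref{cor:pos-neg-rigwt}: namely $e_{*}\xi_{L}M(X)$ for $L/K$ finite Galois and $X/k_L$ proper smooth. Unwinding Construction~\ref{cons:arith-HK} together with Proposition~\ref{pro:HK-coh}~(1), the cohomology in degree $i$ of $\rgama{\mathrm{HK}}^{(\varphi,G_K)}(e_{*}\xi_L M(X))$ is the rigid cohomology of $X$ in degree $i$, which is pure of Frobenius weight $i$ by the Weil conjectures. This places the image in $\mathcal{H}_{\mathrm{tar}}$, yielding weight-exactness.

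The main obstacle is the construction of the Chow-like weight structure on $\bdcat_{(\varphi,G_K)}(\breve{K})$ and verifying that its weight complex is an equivalence: this amounts to using semi-simplicity to decompose every bounded complex of $(\varphi,G_K)$-modules as a direct sum of its shifted cohomology groups, and then checking that the resulting decomposition matches $\bkch(\mathcal{H}_{\mathrm{tar}})$ in a way compatible with the Galois-descent presentation of the target given in Notation~\ref{not:dcoeff}.
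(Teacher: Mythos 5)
Your proposed factorization mechanism has a genuine gap at its central step: the claim that semi-simplicity identifies $\bdcat_{(\varphi,G_K)}(\breve{K})$ with $\bkch$ of a ``pure'' heart, so that the target's weight complex functor is an equivalence. Dieudonn\'e--Manin gives semi-simplicity for isocrystals over $\breve{K}$ (algebraically closed residue field), but the category in the statement is, by Notation \ref{not:dcoeff}, the \emph{colimit of the finite-level categories} $\bdcat_{(\varphi,G_{L/K})}(L_0)$ with $k_L$ finite, and its mapping spectra are computed at finite level (via Corollary \ref{cor:mapsp-htfixpt} and filtered colimits). At finite level there are non-split extensions between pure objects of the same weight: for instance $\mathrm{Ext}^1_{\varphi}(\mathbbm{1},\mathbbm{1})\cong L_0/(\sigma_L-1)L_0\neq 0$, and the class of $1$ never becomes trivial at any finite level (if $\sigma_L(x)-x=1$ with $x\in L_0$, summing over the powers of $\sigma_L$ gives $0=[L_0:\Q_p]$, a contradiction), so it survives in the colimit and is Galois-invariant. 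Hence $\hom(\mathbbm{1},\mathbbm{1}[1])\neq 0$ in $\bdcat_{(\varphi,G_K)}(\breve{K})$, whereas in $\bkch(\htcat\mathcal{H}_{\mathrm{tar}})$ all such groups vanish for objects of the heart. So the weight complex functor of your Chow-like weight structure on the target (which does exist, since $\hom$ vanishes between different weights and $\mathrm{Ext}^1$ vanishes when weights differ by at least $2$) is \emph{not} an equivalence, and composing with its inverse is not available; the commutative square coming from weight-exactness only factors $\subss{W}'\circ\rgama{\mathrm{HK}}^{(\varphi,G_K)}$ through $\subss{W}$, not the realization itself.

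The purity input you invoke (rigid cohomology of smooth proper varieties over a finite field is pure, via Proposition \ref{pro:HK-coh}, together with the splitting of $e^{*}e_{*}$ from Proposition \ref{pro:proper-sm-under-lad}) is exactly the right ingredient, but it has to be fed into a different mechanism: the factorization criterion for weight complex functors (\cite[Corollary 3.30]{BGV25}, as the paper does), which only requires that for the heart generators $e_{*}\xi_L M(X)$ one has $\pi_i\map\bigl(\rgama{\mathrm{HK}}^{(\varphi,G_K)}(e_{*}\xi_LM(X)),\rgama{\mathrm{HK}}^{(\varphi,G_K)}(e'_{*}\xi_{L'}M(Y))\bigr)=0$ for $i>0$. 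This vanishing uses only: $\hom$-vanishing between pure $\varphi$-modules of distinct weights, $\mathrm{Ext}^1$-vanishing when the weight gap is at least $2$, cohomological dimension one, and the $\Q$-linear collapse of the homotopy fixed point spectral sequence after passing to a common Galois extension $F\supseteq L,L'$. Same-weight extensions (which genuinely exist in the target) never enter, which is why the direct check succeeds where the formality claim fails. If you replace your ``$\subss{W}'$ is an equivalence'' step by this criterion, your argument essentially becomes the paper's proof.
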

\begin{proof}
  In the proof, we will simply write $\rgama{\mathrm{HK}}$ for $\rgama{\mathrm{HK}}^{(\varphi,G_{K})}$ and use $\map_{(\varphi,G_K)}$ or $\map_{\varphi}$ to denote the mapping spaces between $(\varphi,G_K)$-modules or $\varphi$-modules respectively. By \cite[Corollary 3.30]{BGV25} and Corollary \ref{cor:pos-neg-rigwt} ($3$), it suffices to check that, for any finite Galois extensions $L/K$ and $L'/K$ and any proper smooth varieties $X$ over $k_L$ and $Y$ over $k_{L'}$, we have
  \begin{equation}
    \label{eq:factor-wtcplx}
    \pi_i \map_{(\varphi,G_K)}(\rgama{\mathrm{HK}}(e_{*}\xi_LM(X)), e_{*}' \xi_{L'}M(Y)) \simeq 0
  \end{equation}
for $i>0$. The construction (\ref{eq:phi-GK-HK}) shows that $\rgama{\mathrm{HK}}(e_{*} \xi_LM(X))$ is the image of $\rgama{\mathrm{HK},L}(e^{*}e_{*}\xi_LM(X))$ under the canonical functor
\[
\dcat_{(\varphi,G_{L/K})}(L_0) \to \dcat_{(\varphi,G_K)}(\breve{K}).
\]
We first compute in $\bdcat_{(\varphi,G_{L/K})}(L_{0})$:
\begin{equation}
  \label{eq:HK-K-heart}
 \rgama{\mathrm{HK},L}(e^{*}e_{*} \xi_L M(X)) \simeq \bigoplus_{e_{L/K}}   \rgama{\mathrm{HK},L}(\xi_LM(X))
\end{equation}
where the equivalence follows from Proposition \ref{pro:proper-sm-under-lad}. Now let $F/K$ be a finite Galois extension containing both $L$ and $L'$. Note that the canonical functor $\dcat_{(\varphi,G_{L/K})}(L_0) \to \dcat_{(\varphi,G_K)}(\breve{K})$ factors through $\dcat_{(\varphi,G_{F/K})}(F_0)$. This, together with (\ref{eq:HK-K-heart}), yields that $\rgama{\mathrm{HK}}(e_{*}\xi_LM(X))$ is the image of
\[
\bigoplus_{e_{L/K}} \rgama{\mathrm{HK},F}(\xi_FM(X_{k_{F}}))
\]
in $\dcat_{(\varphi,G_K)}(\breve{K})$; and the similar fact holds for $M(Y)$. Therefore, we have in $\dcat_{(\varphi,G_{F/K})}(F_0)$
\begin{align*}
  \phantom{\simeq\,}&\pi_i \map_{(\varphi,G_{F/K})}(\rgama{\mathrm{HK},F}(\xi_FM(X_{k_{F}})), \rgama{\mathrm{HK},F}(\xi_FM(Y_{k_{F}})))\\
  \simeq\,
&\pi_i \map_{\varphi}(\rgama{\mathrm{HK},F}(\xi_FM(X_{k_{F}})), \rgama{\mathrm{HK},F}(\xi_FM(Y_{k_{F}})))^{\htcat G_{F/K}}\simeq 0
\end{align*}
for $i>0$, where the vanishing follows from \cite[Example 3.31]{BGV25} together with Proposition \ref{pro:HK-coh}. Since this holds for arbitrary such finite Galois extension $F$ over $K$, this proves (\ref{eq:factor-wtcplx}) holds.
\end{proof}

\begin{pro}
  \label{pro:wtfil}
   Let $M$ be a compact motive in $\rigmot(K)$. There is a convergent spectral sequence starting at the first page and degenerating at the second page:
  \[
E_{pq}^1(M)=H_q \rgama{\mathrm{HK}}^{(\varphi,G_K)}(W_pM) \Rightarrow H_{p+q} \rgama{\mathrm{HK}}^{(\varphi,G_K)}(M),
\]
where the differential maps are induced by differential maps in the weight complex. In particular, we have a (finite) increasing filtration on $H_n \rgama{\mathrm{HK}}^{(\varphi,G_K)}(M)$ whose $i$-th graded piece is pure of weight $i-n$. Moreover, the filtration is stable under $G_K$-action, and the monodromy induces a map
\[
\mathrm{gr}^W_i H_{n} \rgama{\mathrm{HK}}^{(\varphi,G_K)}(M) \to \mathrm{gr}^W_{i-2} H_n \rgama{\mathrm{HK}}^{(\varphi,G_K)}(M)(-1).
\]
\end{pro}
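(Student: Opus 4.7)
\textbf{The plan} is to leverage Lemma~\ref{lem:GHK-wtcplx} and feed the weight complex into the standard hypercohomology spectral sequence. First I would invoke Lemma~\ref{lem:GHK-wtcplx} to factor $\rgama{\mathrm{HK}}^{(\varphi,G_K)}$ as $F \circ \subss{W}$ for some exact $\Q$-linear functor $F\colon \bkch(\htcat \mathcal{H}_K) \to \bdcat_{(\varphi,G_K)}(\breve K)$. Then, for $C_\bullet := \subss{W}(M)$, the brutal (``stupid'') filtration $\sigma_{\leq p} C_\bullet$ gives rise, via $F$, to the standard spectral sequence recalled in \cite[\S 2]{Bon10a}:
\[
E^1_{pq} = H_q F(W_p M) = H_q \rgama{\mathrm{HK}}^{(\varphi,G_K)}(W_p M) \Rightarrow H_{p+q} \rgama{\mathrm{HK}}^{(\varphi,G_K)}(M),
\]
with $d^1$ induced by the differentials of the weight complex, and the induced filtration on $H_n \rgama{\mathrm{HK}}^{(\varphi,G_K)}(M)$ is automatically finite because $\subss{W}(M)$ is bounded.

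The crux is to establish purity of the $E^1$ page and deduce $E^2$-degeneration. By Corollary~\ref{cor:pos-neg-rigwt} the heart $\mathcal{H}_K$ is the idempotent completion of the additive subcategory generated by objects $e_* \xi_L M(X)$, with $X$ proper smooth over the residue field $k_L$ of a finite Galois extension $L/K$ and $e\colon \spa(L) \to \spa(K)$ the structure map. Tracing through the construction in Lemma~\ref{lem:GHK-wtcplx} (which expresses the realization of $e_* \xi_L M(X)$ as the image of $\bigoplus_{e_{L/K}} \rgama{\mathrm{HK},L}(\xi_L M(X))$ in the Galois-equivariant category) together with Proposition~\ref{pro:HK-coh} and the monoidal compatibility of $\rgama{\mathrm{HK},L}$ with duals, I would argue that $H_q \rgama{\mathrm{HK}}^{(\varphi,G_K)}(e_* \xi_L M(X))$ is pure of weight $-q$. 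Since purity is preserved by direct sums, subquotients, base change to $\breve K$, and forgetful functors, every $E^r_{pq}$ for $r \geq 1$ is then pure of weight $-q$. Next, for $r \geq 2$, the differential $d^r\colon E^r_{pq} \to E^r_{p-r,q+r-1}$ goes between subquotients of pure modules of distinct weights $-q$ and $-q-r+1$; over the finite field $k$, morphisms of $\varphi$-modules between pure pieces of different weights vanish because the Frobenius eigenvalues have different complex norms. Hence $E^2 = E^\infty$, and setting $i = p$ gives $\mathrm{gr}^W_i H_n \cong E^2_{i,n-i}$ pure of weight $-(n-i) = i-n$, as required.

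Because the entire spectral sequence is constructed inside $\bdcat_{(\varphi,G_K)}(\breve K)$, stability of the filtration under the Frobenius and the Galois action of $G_K$ is automatic. For the monodromy statement, I would invoke Construction~\ref{cons:arith-HK} to lift $\rgama{\mathrm{HK}}^{(\varphi,G_K)}(M)$ to a $(\varphi,N,G_K)$-module via $\rgama{\mathrm{HK}}^{\mathrm{ari}}$ and, using the identification of Remark~\ref{rmk:monadic-coeff}, encode the monodromy as a $\varphi$-equivariant morphism $N\colon H_n \rgama{\mathrm{HK}}^{(\varphi,G_K)}(M) \to H_n \rgama{\mathrm{HK}}^{(\varphi,G_K)}(M)(-1)$. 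The relation $N\varphi = p\varphi N$ forces $N$ to strictly decrease Frobenius weights by $2$; combined with the purity of the graded pieces established above, this yields $N(W_i H_n) \subseteq W_{i-2} H_n$ (the filtration on $H_n(-1)$ being the Tate twist of that on $H_n$), inducing the claimed map $\mathrm{gr}^W_i H_n \to \mathrm{gr}^W_{i-2} H_n(-1)$.

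The main obstacle I expect is verifying that the purity of Proposition~\ref{pro:HK-coh} (phrased for $\rgama{\mathrm{HK},L} \circ \xi_L$ applied to $M(X)^\vee$) transports correctly through dualization, the pushforward $e_*$, the $G_{L/K}$-equivariant structure, and the base change to $\breve K$, so as to yield purity of $H_q$ at the level of $(\varphi,G_K)$-modules; once this is in place, both the $E^2$-degeneration and the monodromy weight-shift follow formally from weight-theoretic disjointness over the finite residue field.
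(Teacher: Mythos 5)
Your construction and degeneration argument are essentially the paper's: factor $\rgama{\mathrm{HK}}^{(\varphi,G_K)}$ through $\subss{W}$ via Lemma \ref{lem:GHK-wtcplx}, filter the weight complex by (naive/brutal) truncations to get the spectral sequence, and kill every $d^r$ with $r\ge 2$ by purity of the $E^1$-terms plus weight disjointness of $\varphi$-modules over the finite residue field; the point you flag as the "main obstacle" (transporting the purity of Proposition \ref{pro:HK-coh} through duals, $e_*$ and the equivariant structure) is handled in the paper at the same level of detail, namely by invoking Proposition \ref{pro:HK-coh} together with \eqref{eq:HK-K-heart}, so no real gap there. Where you genuinely diverge is the monodromy statement. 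The paper uses compactness of $M$ to place it in $\rigmot_{\mathrm{gr}}(L)^{\htcat \gal(L/K)}$, uses weight-exactness of that inclusion (Theorem \ref{thm:wt-str-rigmotcat}(4)) to compare the two weight complex functors, passes to underlying $\varphi$-modules and identifies the resulting spectral sequence with the one for $M_L$ in \cite{BGV25}, where the map $\mathrm{gr}^W_i \to \mathrm{gr}^W_{i-2}(-1)$ is already established. You instead argue intrinsically: by Construction \ref{cons:arith-HK} and Example \ref{eg:isocry-twist}(3), $N$ is a morphism of $\varphi$-modules $H_n \to H_n(-1)$, so it lowers Frobenius weights by $2$; since the graded pieces of the spectral-sequence filtration are pure of pairwise distinct, strictly increasing weights, that filtration must coincide with the Frobenius-weight filtration, whence $N(W_i)\subseteq W_{i-2}(-1)$ and the induced graded maps exist. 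This is correct and shorter, avoiding the reduction to the good-reduction equivariant subcategory and the comparison diagram of weight complex functors; the one step you should spell out is exactly the identification of the two filtrations (a finite $\varphi$-stable filtration with pure graded pieces of distinct weights is necessarily the weight filtration), which is what licenses the weight-shift argument. What the paper's longer route buys in exchange is the explicit identification of this spectral sequence with the one of \cite{BGV25} in the good/semistable setting, which is also what makes the comparison with the previously known filtration (cf.\ Remark \ref{rmk:G-wtfil}) immediate.
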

\begin{proof}
  The naive truncation gives a filtration
  \[
\cdots \to \tau_{\le i-1} \subss{W}M \to \tau_{\le i} \subss{W}M \to \tau_{\le i+1} \subss{W}M
  \]
  for $\subss{W}M$. Applying the functor $F$, we get a filtration on $\rgama{\mathrm{HK}}^{(\varphi,G_K)}(M)$. Then, using the spectral sequence \cite[Proposition 1.2.2.14]{HA}, we get the desired spectral sequence. For the degeneracy, it suffices to look at the underlying morphisms between $\varphi$-modules (using Corollary \ref{cor:mapsp-htfixpt}). As $\varphi$-modules, each term $E_{pq}^1$ is pure of weight $-q$ by Proposition \ref{pro:HK-coh} and (\ref{eq:HK-K-heart}). Thus, on the second page, each differential map $E_{pq}^2 \to E_{p-2,q+1}^{2}$ is a morphism of $\varphi$-modules with different weights, and hence it vanishes. As a consequence, the $\infty$-term $E_{pq}^{\infty}$ is pure of weight $-q$, which is the $p$-th graded of the induced filtration on $H_{p+q} \rgama{\mathrm{HK}}^{(\varphi,G_K)}(M)$.

  For last assertion, the $G_K$-action is clear. We need to prove for the monodromy operator. As $M$ is compact, Proposition \ref{pro:rigmot-cptgen-pgd} shows that the motive $M$ lies in a full subcategory $\rigmot_{\mathrm{gr}}(L)^{\htcat G_{L/K}}$ for some finite Galois extension $L$ of $K$. As we proved, the fully faithful functor
  \[
\rigmot_{\mathrm{gr}}(L)^{\htcat G_{L/K}}_{\omega} \hookrightarrow \rigmot(K)_{\omega}
\]
is weight-exact (Theorem \ref{thm:wt-str-rigmotcat} ($4$)). Therefore, the weight complex $\subss{W} M$ of $M$ comes from the weight complex of $M$ in $\rigmot_{\mathrm{gr}}(L)^{\htcat G_{L/K}}$. The proof of Lemma \ref{lem:GHK-wtcplx} also proves that the realization
\[
 \rgama{\mathrm{HK},L}^{(\varphi,G_{L/K})} \colon \rigmot_{\mathrm{gr}}(L)^{\htcat G_{L/K}}_{\omega} \to \bdcat_{(\varphi,G_{L/K})}(L_{0})
\]
factors through the weight complex of $w_{L/K}$ defined in Lemma \ref{lem:wt-str-galinv}. Therefore, the spectral sequence associated to $M$ is the image of the spectral sequence associated to $M$ in $\rigmot_{\mathrm{gr}}(L)^{\htcat G_{L/K}}$. It suffices to restrict to this full subcategory and prove the monodromy induces the desired map between graded pieces. The realization
\[
\rgama{\mathrm{HK},L} \colon \rigmot_{\mathrm{gr}}(L)^{\htcat G_{L/K}} \to \dcat_{(\varphi,N,G_{L/K})}(L_{0})
\]
is obtained by applying the $G_{L/K}$-homotopy fixed points functor to the Hyodo-Kato realization on $\rigmot_{\mathrm{gr}}(L)$. Thus, we have a commutative diagram
\[
\begin{tikzcd}
\rigmot_{\mathrm{gr}}(L)^{\htcat G_{L/K}}_{\omega} \arrow[d, "(-)_L"'] \arrow[r, "\subss{W}^L"] & \bkch(\htcat \mathcal{H}_{L/K}) \arrow[d] \arrow[r]   & {\bdcat_{(\varphi,G_{L/K})}(L_0)} \arrow[d] & {\bdcat_{(\varphi,N,G_{L/K})}(L_0)} \arrow[d] \arrow[l] \\
\rigmot_{\mathrm{gr}}(L)_{\omega} \arrow[r, "\subss{w}^L"]                                      & \bkch(\htcat \mathcal{H}_L^{\mathrm{chow}}) \arrow[r] & \bdcat_{\varphi}(L_0)                       & {\bdcat_{(\varphi,N)}(L_0)} \arrow[l]                  
\end{tikzcd}
\]
where $\subss{W}^{L}$ is the weight complex functor with respect to $w_{L/K}$ and $\subss{w}^{L}$ is the weight complex functor with respect to the weight structure on $\rigmot_{\mathrm{gr}}(L)_{\omega}$. Note that the weight spectral sequence of $M$ on $\rigmot_{\mathrm{gr}}(L)^{\htcat G_{L/K}}$ is given by
\[
E_{pq}^1= H_q \rgama{\mathrm{HK},L}^{(\varphi,G_{L/K})}(W^L_pM) \Rightarrow H_{p+q} \rgama{\mathrm{HK},L}^{(\varphi,G_{L/K})}(M).
\]
We need to show the monodromy operator on $H_{n} \rgama{\mathrm{HK},L}(M)$ induces a map
\[
\mathrm{gr}_i H_n \rgama{\mathrm{HK},L}^{(\varphi,G_{L/K})}(M) \to \mathrm{gr}_{i+2} H_n \rgama{\mathrm{HK},L}^{(\varphi,G_{L/K})}(M)(-1).
\]
For this, it suffices to look at their underlying $\varphi$-modules. In other words, the spectral sequence becomes
\[
E_{pq}^1=H_q \rgama{\mathrm{HK},L}(w^L_p (M_L)) \Rightarrow H_{p+q} \rgama{\mathrm{HK},L}(M_L)
\]
which is exactly the weight spectral sequence associated to $M_L$ studied in \cite[Example 4.35, 4.36]{BGV25}, where it is already shown that the monodromy operator induces a map from the $i$-th graded piece to the $(i+2)$-th graded piece.
\end{proof}

\begin{rmk}
  The spectral sequence in Proposition~\ref{pro:wtfil} is formulated homologically, but it can be translated into a cohomological version. Throughout this paper we adopt the homological convention, as it is more natural and convenient in the context of stable $\infty$-categories, where the pervasive use of homotopy-theoretic structures makes the homological perspective more suitable.
\end{rmk}

If we take $M$ to be a motive associated to a smooth quasi-compact $K$-rigid analytic space, then we get the weight filtration on $H^i_{\mathrm{HK}}(X)$ by the fact that $H^i_{\mathrm{HK}}(X)= H_{-i} \rgama{\mathrm{HK}}^{(\varphi,G_K)} (M(X)^{\vee})$ (see Remark \ref{rmk:covariant-HK}).

\begin{cor}[Weight Filtrations for Rigid Analytic Spaces]
  \label{cor:wt-fil-rigsp}
 Let $X$ be a smooth quasi-compact $K$-rigid analytic space. Then, for each $i$, the (overconvergent) Hyodo-Kato cohomology $H^i_{\mathrm{HK}}(X)$ admits a finite increasing filtration $\mathrm{Fil}^W_k H^i_{\mathrm{HK}}(X)$ stable under $G_K$-action whose $k$-th graded piece is pure of weight $i+k$ and the monodromy induces a map $\mathrm{gr}_k^{W} H^i_{\mathrm{HK}}(X) \to \mathrm{gr}^W_{k-2} H^i_{\mathrm{HK}}(X)$.
\end{cor}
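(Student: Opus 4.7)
The plan is to reduce the corollary to Proposition \ref{pro:wtfil} by taking $M = M(X)^{\vee}$ and then translating the homological indexing into a cohomological one. The two ingredients we need are: (a) that $M(X)^{\vee}$ is a compact object of $\rigmot(K)$, so that Proposition \ref{pro:wtfil} applies; and (b) the identification $H^{i}_{\mathrm{HK}}(X) \simeq H_{-i}\, \rgama{\mathrm{HK}}^{(\varphi,G_{K})}(M(X)^{\vee})$ explained in Remark \ref{rmk:covariant-HK} and Proposition \ref{pro:HK-coh}.

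For (a), I would argue that since $X$ is smooth and quasi-compact, $M(X)$ is compact in $\rigmot(K)$: compact generators are given by motives of smooth affinoids, and a quasi-compact smooth rigid space admits a finite admissible cover by such affinoids, so $M(X)$ is obtained as a finite colimit of compact objects. Since $\rigmot(K) \in \calg(\prlc)$, compact objects form a symmetric monoidal subcategory and dualizability of smooth rigid motives (from the six-functor formalism developed in \cite{AGV22}) guarantees that $M(X)^{\vee}$ is again compact. With this in hand, Proposition~\ref{pro:wtfil} applied to $M = M(X)^{\vee}$ produces a convergent spectral sequence
\[
E^{1}_{p,q}(M(X)^{\vee}) = H_{q}\,\rgama{\mathrm{HK}}^{(\varphi,G_{K})}(W_{p}M(X)^{\vee}) \Rightarrow H_{p+q}\,\rgama{\mathrm{HK}}^{(\varphi,G_{K})}(M(X)^{\vee})
\]
degenerating at $E^{2}$, with an induced finite increasing filtration on each $H_{n}\,\rgama{\mathrm{HK}}^{(\varphi,G_{K})}(M(X)^{\vee})$ whose $p$-th graded piece is pure of weight $p-n$, stable under the $G_{K}$-action, and on which the monodromy induces a map of bidegree $(-2, +1)$ with a Tate twist.

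For (b), I would set $n = -i$ so that $H^{i}_{\mathrm{HK}}(X) \simeq H_{-i}\,\rgama{\mathrm{HK}}^{(\varphi,G_{K})}(M(X)^{\vee})$, and define
\[
\mathrm{Fil}^{W}_{k} H^{i}_{\mathrm{HK}}(X) := \mathrm{Fil}^{W}_{k}\, H_{-i}\,\rgama{\mathrm{HK}}^{(\varphi,G_{K})}(M(X)^{\vee}).
\]
With this reindexing, the $k$-th graded piece becomes pure of weight $k - (-i) = i + k$, matching the statement. The $G_{K}$-stability is inherited directly from Proposition \ref{pro:wtfil}, and the monodromy map $\mathrm{gr}^{W}_{k} \to \mathrm{gr}^{W}_{k-2}(-1)$ on homology translates into the claimed map on cohomology (the Tate twist is implicit in the statement of the corollary and consistent with Proposition~\ref{pro:intro-wtfil}).

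The only mildly subtle point is the compactness and dualizability of $M(X)^{\vee}$ for $X$ merely smooth quasi-compact rather than proper; all the substantive work -- the existence of the weight structure, the factorization of the Hyodo-Kato realization through the weight complex, the degeneration at $E^{2}$, and the compatibility with monodromy and Frobenius -- has already been carried out in Theorem~\ref{thm:wt-str-rigmotcat}, Lemma~\ref{lem:GHK-wtcplx} and Proposition~\ref{pro:wtfil}, so nothing new arithmetic or motivic is required here.
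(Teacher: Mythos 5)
Your proposal is correct and follows essentially the same route as the paper: the paper's own proof is just the one-sentence observation preceding the corollary, namely taking $M = M(X)^{\vee}$, invoking $H^i_{\mathrm{HK}}(X) \simeq H_{-i}\,\rgama{\mathrm{HK}}^{(\varphi,G_K)}(M(X)^{\vee})$ from Remark \ref{rmk:covariant-HK}, and reindexing the conclusion of Proposition \ref{pro:wtfil} with $n = -i$. Your added discussion of why $M(X)^{\vee}$ is compact (compactness of $M(X)$ for $X$ smooth quasi-compact, plus dualizability so that $\map(M(X)^{\vee},-) \simeq \map(\mathbbm{1}, M(X)\otimes -)$ preserves filtered colimits) is a reasonable expansion of a point the paper leaves implicit; the rest of the argument and the weight/index bookkeeping match the paper exactly.
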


\begin{rmk}
  \label{rmk:G-wtfil}
  The filtration in Corollary \ref{cor:wt-fil-rigsp} is referred as the \myemph{weight filtration} of the Hyodo-Kato cohomology $H^i_{\mathrm{HK}}(X)$. In \cite{BGV25}, the authors constructed it for a special case where $X$ admits semi-stable reduction, which can be regarded as the $p$-adic analytic analogue of the weight filtration defined by Rapoport-Zink in \cite{RZ82} for $\ell$-adic cohomologies of algebraic varieties. In contrast, we do not assume the existence of formal models of $X$, and we prove that our weight filtration is stable under the Galois action.

  This filtration is closely related to Deligne's weight-monodromy conjecture. Indeed, it satisfies the conditions for Deligne's weight filtration as formulated in \cite{Del70,Del74} and partially satisfies those for the \myemph{monodromy filtration} in \cite{Del74}. More precisely, the only missing piece is whether the monodromy operator induces an isomorphism
  \begin{equation}
    \label{eq:monofil}
    N^k \colon \mathrm{gr}^W_k H^i_{\mathrm{HK}}(X) \to \mathrm{gr}^W_{-k} H^i_{\mathrm{HK}}(X).
  \end{equation}
From this perspective, if the morphism in \eqref{eq:HK-K-heart} is an isomorphism, then the $p$-adic weight-monodromy conjecture holds.
\end{rmk}


\nocite{Sau23,berkeley,kashiwara,Ito05,Ito05a,Rot09,Wei94,Sch12,RZ82,RS20,Sai03}

\titleformat{\section}[display]
{\normalfont\Large\bfseries\filcenter}
{\thesection}%
{1pc}
{\titlerule[2pt]
  \vspace{1pc}%
  \huge}

\printbibliography[heading=bibintoc]
\end{document}
